\newcommand{\bel}[1]{\begin{equation}\label{#1}}
\newcommand{\be}{\begin{equation}}
\newcommand{\ba}{\begin{eqnarray}}
\newcommand{\ea}{\end{eqnarray}}
\newcommand{\qe}{\end{equation}}
\newcommand{\Hmm}[1]{\leavevmode{\marginpar{\tiny%
$\hbox to 0mm{\hspace*{-0.5mm}$\leftarrow$\hss}%
\vcenter{\vrule depth 0.1mm height 0.1mm width \the\marginparwidth}%
\hbox to
0mm{\hss$\rightarrow$\hspace*{-0.5mm}}$\\\relax\raggedright #1}}}
\newtheorem{theorem}{Theorem}[section]
\newtheorem{lemma}[theorem]{Lemma}
\newtheorem{corollary}[theorem]{Corollary}
\newtheorem{definition}[theorem]{Definition}
\newtheorem{remark}[theorem]{Remark}
\newtheorem{prop}[theorem]{Proposition}
\begin{document}

\title[on subelliptic harmonic maps with potential]{on subelliptic harmonic maps with potential}

\author{Yuxin Dong}\footnote{The first author is supported by NSFC Grants No. 11771087 and No. 12171091, and the second author is the
corresponding author.}
\address{Yuxin Dong: School of Mathematical Sciences, Laboratory of Mathematics for Nonlinear Science, Fudan University, Shanghai 200433, P.R. China }
\email{\href{mailto:yxdong@fudan.edu.cn}{yxdong@fudan.edu.cn}}

\author{Han Luo}
\address{Han Luo: School of Mathematical Sciences, Fudan University, Shanghai 200433, P.R. China }
\email{\href{mailto:19110180023@fudan.edu.cn}{19110180023@fudan.edu.cn}}

\author{Weike Yu}
\address{Weike Yu: School of mathematics and statistics, Nanjing University of Science and Technology, Nanjing, 210094, Jiangsu, P.R. China }
\email{\href{mailto:wkyu2018@outlook.com}{wkyu2018@outlook.com}}

\begin{abstract}
Let $(M,H,g_H;g)$ be a sub-Riemannian manifold and $(N,h)$ be a Riemannian manifold. For a smooth map $u: M \to N$, we consider the energy functional $E_G(u) = \frac{1}{2} \int_M[|\mathrm{d}u_H|^2 - 2G(u)] \mathrm{d}V_M$, where $\mathrm{d}u_H$ is the horizontal differential of $u$, $G:N\to \mathbb{R}$ is a smooth function on $N$. The critical maps of $E_G(u)$ are referred to as subelliptic harmonic maps with potential $G$. In this paper, we investigate the existence problem for subelliptic harmonic maps with potentials by a subelliptic heat flow. Assuming that the target Riemannian manifold has non-positive sectional curvature and the potential $G$ satisfies various suitable conditions, we prove some Eells-Sampson type existence results when the source manifold is either a step-$2$ sub-Riemannian manifold or a step-$r$ sub-Riemannian manifold whose sub-Riemannian structure comes from a tense Riemannian foliation.
\end{abstract}

\maketitle

\section{Introduction}\label{sec:intro}

Sub-Riemannian geometry can be regraded as a natural generalization of Riemannian geometry. A sub-Riemannian manifold is defined as a triple $(M, H, g_H)$, where $M$ is a connected smooth manifold, $H$ is a subbundle bracket generating for $TM$, and $g_H$ is a smooth fiberwise metric on $H$. Recently, geometric analysis on sub-Riemannian manifolds has been the subject of intense study (cf.\cite{MR2154760}\cite{baudoin2018sublaplacians}).

On the other hand, the equilibrium system of ferromagnetic spin chain and the Neumann Motion which describe two important physic phenomena, have been received much attention. To study them, harmonic maps with potential were introduced in \cite{MR1433176}. This is a new kind of maps more general than the usual harmonic maps, whose behavior may drastically change in the presence of a potential. Various existence and nonexistence results of harmonic
maps with potential have been achieved. Fardoun and Ratto \cite{MR1433176} obtained some variational properties and existence results of harmonic maps with potential when the target manifolds are spheres. Chen \cite{MR1618210} gave a Liouville theorem for harmonic maps with potential and he \cite{MR1680678} also established uniqueness and
existence results of Landau-Lifshitz equations. Later, \cite{MR1800592} gave Eells-Sampson type results for harmonic maps with potential.

The main purpose of this paper is to study a natural counterpart of harmonic maps with potential in sub-Riemannian geometry. Let $(M, H, g_H)$ be a sub-Riemannian manifold with a smooth measure $d\mu$ and let $(N, h)$ be a Riemannian manifold. Given a function $G \in C^{\infty}(N)$, we consider the following energy functional
\begin{equation}\label{eq:energy functional}
E_G(u)=\frac{1}{2}\int_M [\vert \mathrm{d}u_H\vert^2-2G(u)]\,\mathrm{d}\mu
\end{equation}
where $u: M \to N$ is a smooth map and $\mathrm{d}u_H$ is the restriction of $\mathrm{d}u$ to $H$. A smooth map $u$ : $(M, H, g_H)\to (N, h)$ is called a subelliptic harmonic map with potential $G$ if it is a critical point of (\ref{eq:energy functional}). The subelliptic harmonic maps with potential can be viewed as a generalization of both harmonic maps with potential and subellipitic harmonic maps. In sub-Riemannian geometry, some most important cases are the relatively simple cases that the measures $\mathrm{d}\mu$ for sub-Riemannian manifolds are the volume elements of compatible Riemannian metrics (also called Riemannian extension $g$ of $g_H$), such as the Webster metric in CR geometry, the contact metric in contact geometry, etc. Hence, the present paper concentrates on sub-Riemannian manifolds with such Riemannian extensions. Then the Euler-Lagrange equation of (\ref{eq:energy functional}) is
\begin{equation}\label{eq:tension field}
\tau(u)=\tau_H(u)+(\widetilde\nabla G)(u)=0
\end{equation}
where $\tau_H(u)$ denotes the subelliptic tension field associated with the horizontal energy (cf.\cite{MR4236537}) and $\widetilde\nabla$ is the Riemannian connection on $(N, h)$.

As we know, \cite{10.2307/117803} first introduced subelliptic harmonic maps whose domain are in the Euclidean space. They investigated the Dirichlet existence problem for such subelliptic harmonic maps. Later, a related uniqueness result was given by \cite{226991494}. The pseudo-harmonic maps from pseudoconvex CR manifolds, introduced by Barletta et al. in \cite{MR1871387}, are actually subelliptic harmonic maps defined with respect to the Webster metrics. Some regularity results for subelliptic harmonic maps from Carnot groups were established in \cite{226227263}, see also \cite{225602822}, \cite{272379899} for some regularity results of subelliptic p-harmonic maps. In \cite{barletta2004pseudoharmonic}, the authors investigated the stability problem of pseudo-harmonic maps with potential from strictly pseudoconvex CR manifolds, which are the special cases of the subelliptic harmonic maps with potential.

In Riemannian and K{\"a}hlerian geometry, there are many fundamental applications based on Eells-Sampson theorem (cf.\cite{10.2307/117803}\cite{2347703}). Chang and Chang \cite{MR3038722} obtained an Eells-Sampson type result for pseudo-harmonic maps from CR manifolds under some additional analytic condition, which was later generalized by Ren and Yang in \cite{MR3844509}. Dong \cite{MR4236537} obtained Eells-Sampson type results for subelliptic harmonic maps from sub-Riemannian manifolds in some general cases. In this paper, we aim to establish Eells-Sampson type theorems for subelliptic harmonic maps with potential from certain kinds of sub-Riemannian manifolds. To this end, we investigate the following subelliptic harmonic map heat flow with potential $G$ associated with (\ref{eq:energy functional}) and (\ref{eq:tension field}):
\begin{equation}\label{eq:heat flow u}
\left\{\begin{gathered}
\frac{\partial u}{\partial t}=\tau(u)\\
\left.u\right|_{t=0}=\bar u
\end{gathered}\right.
\end{equation}
where $\bar u: M \to N$ is the initial map, which is assumed to be $C^{\infty}$ for simplicity.

Here and afterwards, let $\text{Hess}\ G$ denote the Hessian matrix of $G$ with
respect to the Riemannian connection $\widetilde\nabla$. First we have a long time existence for (\ref{eq:heat flow u}) as follows.

\begin{theorem}\label{thm:long time existence for complete}
Let $(M,H,g_H;g)$ be a compact sub-Riemannian manifold and let $(N,h)$ be a complete Riemannian mainfold with non-positive sectional curvature. There exists $T>0$ such that the heat flow (\ref{eq:heat flow u}) has a unique solution on $M\times[0,T)$. If, for some constant $C > 0$, $\text{Hess}\ G\le C\cdot h$, then $T = + \infty$. In particular, when $(N,h)$ is compact, for any $G\in C^{\infty}(N)$, we have $T = + \infty$.
\end{theorem}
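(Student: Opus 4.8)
The plan is to treat \eqref{eq:heat flow u} as a semilinear subelliptic parabolic system and to run the Eells--Sampson strategy, with the non-positivity of the sectional curvature of $N$ and the bound $\mathrm{Hess}\,G\le C\cdot h$ controlling the two a priori estimates that force long-time existence. First I would isometrically embed $(N,h)$ into some $\mathbb{R}^K$ by Nash's theorem, so that $u:M\to N$ becomes an $\mathbb{R}^K$-valued function and \eqref{eq:heat flow u} takes the form $\partial_t u=\Delta_H u+A(u)(\mathrm{d}u_H,\mathrm{d}u_H)+(\widetilde\nabla G)(u)$, where $\Delta_H$ is the horizontal sub-Laplacian (a H\"ormander-type, hence hypoelliptic, operator since $H$ is bracket generating), $A$ is the second fundamental form of $N\hookrightarrow\mathbb{R}^K$, and the potential enters only through the smooth, locally Lipschitz lower-order term $(\widetilde\nabla G)(u)$. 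Because $M$ is compact and $\Delta_H$ is subelliptic, short-time existence and uniqueness in the parabolic H\"older (or $L^p$) spaces adapted to the Carnot--Carath\'eodory metric follow exactly as in the potential-free case treated in \cite{MR4236537}; the extra term $(\widetilde\nabla G)(u)$ is absorbed by the same contraction argument. This produces a maximal existence interval $[0,T_{\max})$, and it remains to show $T_{\max}=+\infty$ under $\mathrm{Hess}\,G\le C\cdot h$.

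The heart of the matter is an a priori bound on the horizontal energy density $e(u)=\tfrac12|\mathrm{d}u_H|^2$. I would derive a subelliptic Bochner--Weitzenb\"ock formula along the flow of the schematic form
$$(\partial_t-\Delta_H)\,e(u)=-|\nabla_H\mathrm{d}u_H|^2-\sum_{a,b}\big\langle R^N(\mathrm{d}u(e_a),\mathrm{d}u(e_b))\mathrm{d}u(e_b),\mathrm{d}u(e_a)\big\rangle+\sum_a\mathrm{Hess}\,G(\mathrm{d}u(e_a),\mathrm{d}u(e_a))+\mathcal{E},$$
where $\{e_a\}$ is a local horizontal orthonormal frame and $\mathcal{E}$ collects the terms coming from the curvature and torsion of the sub-Riemannian structure of $M$. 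Since $M$ is compact, $|\mathcal{E}|\le K\,e(u)$ for a fixed constant $K$; the non-positive sectional curvature of $N$ makes $-\sum_{a,b}\big\langle R^N(\cdots)\big\rangle\le 0$; and $\mathrm{Hess}\,G\le C\cdot h$ bounds the potential term by $2C\,e(u)$. Dropping the good term $-|\nabla_H\mathrm{d}u_H|^2\le 0$ yields the differential inequality $(\partial_t-\Delta_H)\,e(u)\le (K+2C)\,e(u)$. Both hypotheses of the theorem are used precisely here.

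Because $M$ is compact, the parabolic maximum principle for the degenerate-elliptic operator $\partial_t-\Delta_H$ applies and gives $\sup_M e(u_t)\le e^{(K+2C)t}\sup_M e(\bar u)$, finite on every bounded subinterval of $[0,T_{\max})$. In the same spirit I would bound the velocity $v=\partial_t u=\tau(u)$: differentiating \eqref{eq:heat flow u} in $t$ produces a linear Jacobi-type equation, and non-positive curvature together with $\mathrm{Hess}\,G\le C\cdot h$ and the compactness of $M$ give $(\partial_t-\Delta_H)|v|^2\le C'|v|^2$, whence $\sup_M|v_t|^2\le e^{C't}\sup_M|\tau(\bar u)|^2<\infty$ on bounded intervals. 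Integrating $|\partial_t u|$ in time shows $d_N(u(x,t),\bar u(x))$ stays bounded for $t<T<T_{\max}$, so (as $\bar u(M)$ is compact) the image of $u_t$ remains in a fixed compact subset of $N$. With uniform $C^0$ and $C^1$ bounds in hand, subelliptic parabolic Schauder estimates bootstrap to uniform $C^\infty$ bounds on $[0,T]$ for every $T<T_{\max}$; if $T_{\max}<\infty$, these bounds let one re-apply the short-time existence result at a time close to $T_{\max}$ and continue the solution beyond $T_{\max}$, contradicting maximality. Hence $T_{\max}=+\infty$. Finally, when $(N,h)$ is compact the continuous tensor $\mathrm{Hess}\,G$ is automatically bounded, so $\mathrm{Hess}\,G\le C\cdot h$ holds for some $C>0$ and $T=+\infty$ follows for every $G\in C^\infty(N)$, giving the last assertion.

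The step I expect to be the main obstacle is the correct derivation and bookkeeping of the subelliptic Bochner formula: unlike the Riemannian case, commutators of horizontal derivatives generate vertical directions and torsion terms, and one must verify that these genuinely are lower-order and uniformly bounded on the compact $M$, so that they can be honestly absorbed into $\mathcal{E}$ with $|\mathcal{E}|\le K\,e(u)$. A secondary technical point is confirming that the maximum principle and interior Schauder theory are available for the hypoelliptic operator $\partial_t-\Delta_H$, which ultimately rests on H\"ormander's bracket-generating condition for $H$.
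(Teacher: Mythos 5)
Your global strategy is the same as the paper's: an evolution inequality for $\vert\partial_t u\vert^2$ (using $\mathrm{Riem}_N\le 0$ and $\text{Hess}\,G\le C\cdot h$) plus the subelliptic maximum principle (Lemma \ref{thm:subelliptic maximum principle}), a Bochner-type inequality for the energy density, integration of $\vert\partial_t u\vert$ to confine the image of $u(\cdot,t)$ in a fixed compact subset of $N$, and a restart of the short-time existence at times $T_i\to T$ to contradict maximality; the compact-$N$ case then follows because $\text{Hess}\,G$ is automatically bounded. However, there is a genuine gap precisely at the step you flag as the main obstacle. You work with the \emph{horizontal} energy density $e_H(u)=\frac12\vert\mathrm{d}u_H\vert^2$ and claim the torsion/curvature remainder satisfies $\vert\mathcal{E}\vert\le K\,e_H(u)$. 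This is false in the sub-Riemannian setting. Commuting horizontal derivatives in $\Delta_H e_H(u)$ produces, besides terms containing vertical first derivatives $u^I_\alpha$ (such as $\zeta^k u^I_i u^I_{\alpha}T^{\alpha}_{ki}$ and $u^I_i u^I_{\alpha}T^{\alpha}_{ik,k}$, which are controlled by $e_H+e_V$ but not by $e_H$ alone), the term $2u^I_i u^I_{\alpha k}T^{\alpha}_{ik}$, which is genuinely of second order: it contains the mixed derivatives $u^I_{\alpha k}$ and can be absorbed neither into $K\,e_H(u)$ nor into the good term $-\vert\nabla_H\mathrm{d}u_H\vert^2=-(u^I_{ik})^2$. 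So the differential inequality $(\partial_t-\Delta_H)e_H(u)\le (K+2C)\,e_H(u)$ you rely on does not follow, and with it the $C^1$ bound collapses.

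The paper's Lemma \ref{thm:subelliptic bochner formula} resolves exactly this by running the Bochner computation for the \emph{full} energy density $e(u)=e_H(u)+e_V(u)$: the Laplacian of the vertical part contributes the additional good square $(u^I_{\alpha k})^2$, and Cauchy--Schwarz gives $2u^I_i u^I_{\alpha k}T^{\alpha}_{ik}\ge -C_3\,e_H(u)-\frac12 (u^I_{\alpha k})^2$, after which everything closes as $(\partial_t-\Delta_H)e(u)\le C_2\, e(u)$ under $\mathrm{Riem}_N\le 0$ and $\text{Hess}\,G\le C\cdot h$ (note the Hessian must then be tested on both $\mathrm{d}u(e_i)$ and $\mathrm{d}u(e_\alpha)$, which your hypothesis allows). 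The full density is also what the continuation argument actually needs: the existence time produced by Proposition \ref{thm:short time existence} depends on $e(\bar u)$, i.e.\ on all of $\mathrm{d}u$, so restarting the flow at $u(\cdot,T_i)$ with a uniform time step requires a uniform bound on $e(u(\cdot,T_i))$, not merely on $e_H$. Your alternative suggestion of bootstrapping full $C^\infty$ bounds from $C^0\cap C^1_H$ data via Rothschild--Stein estimates could in principle supply this, but as written your argument only yields horizontal control. Replacing $e_H$ by $e$ throughout the Bochner step repairs the proof and makes it coincide with the paper's.
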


For a more precise statement, a generating order was introduced for the sub-Riemannian manifold. We say $M$ is a step-$r$ sub-Riemannian manifold if the tangent space $T_xM$ at each point $x$ can be spanned by sections of $H$ together with their Lie brackets up to order $r$. The simpliest nontrivial sub-Riemannian manifolds are step-$2$ sub-Riemannian manifolds, which include strictly pseudoconvex CR manifolds, contact metric manifolds, and quaternionic contact manifolds, etc. On the other hand, Riemannian foliations provide an important source of sub-Riemannian manifolds as well. For a Riemannian foliation $(M, g; \mathfrak{F})$ with a bundle-like metric $g$, let $H = (T\mathfrak{F})^{\perp}$ (the horizontal subbundle of the foliation $\mathfrak{F}$ with respect to $g$) and $g_H=g\vert_H$. If $H$ is bracket generating for $TM$, then we have a sub-Riemannian manifold $(M, H, g_H; g)$ corresponding to $(M, g; \mathfrak{F})$. The Riemannian foliation $(M, g; \mathfrak{F})$ will be said to be tense if the mean vector field of $\mathfrak{F}$ is parallel with respect to the Bott connection along the leaves.

In order to establish the Eells-Sampson type results, we need to establish the convergence of $u(\cdot,t)$ as $t\to\infty$. Now we consider the following two cases: the source manifold $(M,H,g_H;g)$ is a compact step-$2$ sub-Riemannian manifold or a compact sub-Riemannian manifold corresponding to
a tense Riemannian foliation with the property that $H$ is bracket generating for $TM$. Let: $S(V)\to M$ be the unit sphere bundle of the vertical bundle $V$, that is, $S(V)=\{v\in V :\Vert v \Vert_g=1\}$. For any $v\in S(V)$, the $v$-component of $T(\cdot,\cdot)$ is given by $T^v(\cdot,\cdot)=\langle T(\cdot,\cdot),v\rangle$, where $T(\cdot,\cdot)$ is the torsion of the generalized Bott connection $\nabla^{\mathfrak{B}}$. In the first case, it turns out that the smooth function $\eta(v)=\frac 1 2\Vert T^v(\cdot,\cdot)\Vert^2_g$ can achieve a positive minimal value on $S(V)$, that is, $\eta_{min}=min_{v\in S(V)}\eta(v)>0$.

\begin{theorem}\label{thm:eells sampson type for compact}
Let $(M,H,g_H;g)$ be a compact step-$2$ sub-Riemannian manifold and let $(N,h)$ be a compact Riemannian manifold with non-positive sectional curvature. If $\text{Hess}\,G<\frac{\eta_{min}}{2}\cdot h$, then, for any smooth map $\bar u:M\to N$, the subelliptic harmonic map heat flow with potential $G$ exists on $M\times[0,+\infty)$ and there exists a sequence $t_i\to \infty$, such that $u(x,t_i)$ converges uniformly to a subelliptic
harmonic map $u_{\infty}(x)$, as $t_i\to \infty$. In particular, any map $\bar u \in C^{\infty}(M,N)$ is homotopic to a $C^{\infty}$ subelliptic harmonic map with potential $G$.
\end{theorem}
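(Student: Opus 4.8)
The plan is to carry out the Eells--Sampson scheme in the hypoelliptic setting, with the geometry entering through a Bochner inequality for the horizontal energy density. To begin, since $(N,h)$ is compact one has $\mathrm{Hess}\,G\le C\cdot h$ automatically, so Theorem~\ref{thm:long time existence for complete} already provides a global solution $u$ of \eqref{eq:heat flow u} on $M\times[0,+\infty)$; global existence is therefore not the issue. Differentiating the energy along the flow gives $\frac{d}{dt}E_G(u(\cdot,t))=-\int_M|\tau(u)|^2\,\mathrm{d}\mu=-\int_M|\partial_t u|^2\,\mathrm{d}\mu\le 0$, and since $N$ is compact and $G$ bounded, $E_G$ is bounded below. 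Integrating in $t$ yields $\int_0^\infty\!\!\int_M|\partial_t u|^2\,\mathrm{d}\mu\,\mathrm{d}t<\infty$, so there is a sequence $t_i\to\infty$ along which $\int_M|\tau(u(\cdot,t_i))|^2\,\mathrm{d}\mu\to 0$.

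The heart of the argument is a uniform-in-time bound on the horizontal energy density $e_H(u)=\tfrac12|\mathrm{d}u_H|^2$. I would derive a parabolic Bochner inequality of the form $(\partial_t-\Delta_H)e_H(u)\le(\text{controlled terms})$, where $\Delta_H$ is the sub-Laplacian. The right-hand side collects: a nonnegative Hessian term $|\nabla_H\mathrm{d}u_H|^2$ that may be discarded; the intrinsic curvature and torsion of $M$, which are bounded because $M$ is compact; the pullback of the sectional curvature of $N$, which carries a favorable sign since $\mathrm{Sec}^N\le 0$; and a genuinely new contribution obtained by differentiating $(\widetilde\nabla G)(u)$, controlled by $\mathrm{Hess}\,G$ evaluated on $\mathrm{d}u_H$. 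The decisive sub-Riemannian feature is that $\Delta_H$ only differentiates in horizontal directions, so the vertical energy appearing after commuting derivatives must be reabsorbed. Here the step-$2$, bracket-generating hypothesis is essential: vertical directions are realized as brackets of horizontal fields, and the torsion $T$ of the generalized Bott connection $\nabla^{\mathfrak{B}}$ produces a coercive term whose strength is measured exactly by $\eta_{min}=\min_{v\in S(V)}\tfrac12\|T^v(\cdot,\cdot)\|_g^2>0$. The hypothesis $\mathrm{Hess}\,G<\tfrac{\eta_{min}}{2}\cdot h$ is precisely the balance that keeps the potential term strictly dominated by this torsion gain, so that the combined lower-order coefficient is coercive and a parabolic maximum principle (or a Moser-type iteration) for the hypoelliptic operator $\Delta_H$ yields $\sup_{M\times[0,\infty)}e_H(u)\le C$.

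With the target compact (so that $u$ is automatically $C^0$-bounded) and $e_H(u)$ uniformly bounded, I would invoke the subelliptic Schauder estimates for the step-$2$ sub-Laplacian to bootstrap uniform $C^\infty$ bounds for $u(\cdot,t)$ on compact time intervals. By Arzel\`a--Ascoli, the family $\{u(\cdot,t_i)\}$ has a subsequence converging, uniformly and in $C^2$, to a limit $u_\infty$; since this convergence is strong and $\|\tau(u(\cdot,t_i))\|_{L^2}\to 0$, passing to the limit in \eqref{eq:tension field} shows $\tau(u_\infty)=0$, i.e. $u_\infty$ is a subelliptic harmonic map with potential $G$.

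Finally, the map $t\mapsto u(\cdot,t)$ is a smooth homotopy from $\bar u$ to $u(\cdot,t_i)$, and the uniform convergence $u(\cdot,t_i)\to u_\infty$ shows that $u(\cdot,t_i)$ is homotopic to $u_\infty$ for $i$ large; hence $\bar u$ is homotopic to the subelliptic harmonic map $u_\infty$. I expect the main obstacle to be the second step: deriving the Bochner inequality in the hypoelliptic setting and, in particular, converting the unavoidable vertical energy into a torsion term carrying the sharp constant $\tfrac{\eta_{min}}{2}$, so that the competition with $\mathrm{Hess}\,G$ is resolved in the right direction. The hypoellipticity of $\Delta_H$ also means the pointwise maximum-principle step cannot be taken for granted and must be justified through the sum-of-squares structure of $\Delta_H$ together with the bracket-generating control of the vertical directions.
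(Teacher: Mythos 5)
Your overall skeleton (global existence from Theorem \ref{thm:long time existence for complete}, monotonicity of $E_G$, a sequence $t_i$ with $\Vert\tau(u(\cdot,t_i))\Vert_{L^2}\to 0$, compactness, weak limit plus hypoelliptic regularity, homotopy via the flow) matches the paper, but the decisive step --- the uniform-in-time bound on the energy density --- is wrong as you propose it. The Bochner inequality in this setting (the paper's (\ref{eq:step-2 bochner})) gives
\[
\Big(\frac{\partial}{\partial t}-\Delta_H\Big)e(u)\le (C_{\epsilon}+2\lambda_G)\,e_H(u)-(\eta_{min}-2\lambda_G-\epsilon)\,e_V(u),
\]
where $\lambda_G$ is the least constant with $\text{Hess}\,G\le \lambda_G\cdot h$. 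The torsion/bracket-generating gain measured by $\eta_{min}$ multiplies only the \emph{vertical} density $e_V$; the horizontal density $e_H$ enters with the positive coefficient $C_{\epsilon}+2\lambda_G$, where $C_{\epsilon}$ is a (possibly large) constant built from the torsion, curvature and mean curvature of $(M,H,g)$, and no choice of $\epsilon$ makes it small. So the ``combined lower-order coefficient'' is not coercive pointwise: at points where $e_V=0$ the right-hand side equals $(C_{\epsilon}+2\lambda_G)e_H>0$, and a parabolic maximum principle applied to this pointwise inequality yields only exponential growth $e\le e^{Ct}e(\bar u)$, never a bound uniform in $t$. The hypothesis $\text{Hess}\,G<\frac{\eta_{min}}{2}\cdot h$ is exactly what makes the coefficient of $e_V$ negative (i.e.\ $2\lambda_G<\eta_{min}$); it does nothing to cure the $e_H$ term.

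What closes the argument in the paper is an integral, not pointwise, mechanism, and it is carried by the vertical energy $E_V$, which your proposal never mentions. First, $E_H(u(\cdot,t))=E_G(u(\cdot,t))-E_P(u(\cdot,t))$ is automatically uniformly bounded, by the monotonicity (\ref{eq:first derivative u}) together with $\vert E_P\vert\le(\max_N\vert G\vert)\,\text{vol}(M)$. Second, integrating the Bochner inequality over $M$ turns the troublesome term into $(C_{\epsilon}+2\lambda_G)E_H$, which is now a bounded quantity, while the coercive term produces the differential inequality $\frac{\mathrm{d}}{\mathrm{d}t}E_V+\frac{\eta_{min}-2\lambda_G}{2}E_V\le A$ (Lemma \ref{thm:step-2}); hence $E_V$ is uniformly bounded. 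Only then does one obtain a pointwise bound, via the parabolic mean-value inequality for nonnegative subsolutions (Lemma \ref{thm:subelliptic maximum principle}, in the form of Lemma \ref{thm:energy density}): $e(u(\cdot,t))\le C(\epsilon_0)E(u(\cdot,t-\epsilon_0))$. In other words, the sup bound is deduced from a time-uniform $L^1$ bound on $e=e_H+e_V$, with the step-$2$ hypothesis and the condition on $\text{Hess}\,G$ acting at the level of $E_V$; your ``Moser-type iteration'' alternative is essentially this mean-value inequality, but it cannot start without the integral bound you never establish. Once the correct density bound is in hand, the remainder of your outline (Arzel\`a--Ascoli, vanishing of $\partial_t u$ along a sequence --- which the paper upgrades to a pointwise statement using the same mean-value inequality --- hypoelliptic regularity of the weak limit, and the homotopy) is consistent with the paper's proof.
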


When $N$ is complete, we need a decay condition on $G$ to obtain Eells-Sampson type results.
\begin{prop}\label{thm:hess assumption}
Suppose $(M,H,g_H;g)$ is a compact step-$2$ sub-Riemannian manifold. Let $(N,h)$ be a complete Riemannian manifold with non-positive sectional curvature and let $\rho$ denote the distance function on $N$ from a fixed point $P_{0}\in N$. If, for some $C> 0$,
\begin{equation}\label{eq:hess assumption}
\text{Hess}\,G(y)\le -C(1+\rho(y))^{-1}\cdot h
\end{equation}
then $T=+\infty$ and $u$ converges to $u_{\infty}$, where $u_{\infty}$ is a constant. Moreover, $\Sigma_{G}\neq \varnothing$ in this case, where $\Sigma_{G} = \{y \in N : \widetilde\nabla G(y) = 0\}$.
\end{prop}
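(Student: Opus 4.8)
The plan is to combine the long-time existence already available from Theorem~\ref{thm:long time existence for complete} with a maximum-principle argument that traps the image of the flow inside a fixed compact set, and then to read off the constancy of the limit from an energy-decay estimate driven by the strict concavity of $G$. For the first assertion, observe that the hypothesis $\text{Hess}\,G(y)\le -C(1+\rho(y))^{-1}\cdot h$ gives $\text{Hess}\,G\le 0\le 1\cdot h$, so the growth condition of Theorem~\ref{thm:long time existence for complete} holds with constant $1$, and hence $T=+\infty$.

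The heart of the argument is to confine the image. Using the composition formula
\[
\Delta_H(G\circ u)=\langle\widetilde\nabla G,\tau_H(u)\rangle+\mathrm{trace}_H\,\text{Hess}\,G(\mathrm{d}u_H,\mathrm{d}u_H)
\]
for the sub-Laplacian $\Delta_H$ together with $\partial_t u=\tau_H(u)+\widetilde\nabla G(u)$, I would compute
\[
\Bigl(\frac{\partial}{\partial t}-\Delta_H\Bigr)(G\circ u)=|\widetilde\nabla G(u)|^2-\mathrm{trace}_H\,\text{Hess}\,G(\mathrm{d}u_H,\mathrm{d}u_H)\ge 0,
\]
the inequality using that $\text{Hess}\,G<0$ makes the second term nonnegative. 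Thus $G\circ u$ is a supersolution of the subelliptic heat equation on the compact manifold $M$, so by the parabolic minimum principle $\min_M(G\circ u)(\cdot,t)$ is non-decreasing and $G(u(x,t))\ge \min_M G(\bar u)=:m_0$ for all $(x,t)$. Integrating the assumed Hessian bound along minimizing geodesics from $P_0$ (an ODE comparison $g''(s)\le -C(1+s)^{-1}$) shows $G(y)\to-\infty$ as $\rho(y)\to\infty$; since $N$ is complete, the closed set $\{G\ge m_0\}$ is therefore compact and the image of $u(\cdot,t)$ stays in a fixed geodesic ball $\overline{B(P_0,R_0)}$ for all $t$.

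With the image confined, I would differentiate $\int_M G(u)\,\mathrm{d}\mu$; self-adjointness of $\Delta_H$ and $\int_M\Delta_H(\cdot)\,\mathrm{d}\mu=0$ yield
\[
\frac{\mathrm{d}}{\mathrm{d}t}\int_M G(u)\,\mathrm{d}\mu=\int_M|\widetilde\nabla G|^2\,\mathrm{d}\mu-\int_M\mathrm{trace}_H\,\text{Hess}\,G(\mathrm{d}u_H,\mathrm{d}u_H)\,\mathrm{d}\mu\ \ge\ \frac{C}{1+R_0}\int_M|\mathrm{d}u_H|^2\,\mathrm{d}\mu ,
\]
where the last step uses $-\text{Hess}\,G\ge C(1+R_0)^{-1}h$ on the ball. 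As $\int_M G(u)$ is monotone and bounded above by $(\vol M)\sup_N G<\infty$, integration in $t$ gives $\int_0^\infty\!\int_M|\mathrm{d}u_H|^2\,\mathrm{d}\mu\,\mathrm{d}t<\infty$. On the other hand the total energy satisfies $\tfrac{\mathrm{d}}{\mathrm{d}t}E_G=-\int_M|\tau(u)|^2\,\mathrm{d}\mu\le 0$ and is bounded below, hence converges; since $\int_M G(u)$ also converges, the horizontal energy $E_H(t)=\tfrac12\int_M|\mathrm{d}u_H|^2\,\mathrm{d}\mu$ converges as well, and a convergent nonnegative function that is integrable on $[0,\infty)$ must tend to $0$. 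Therefore $E_H(t)\to 0$.

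Finally, because the image lies in a fixed compact set and $N$ has non-positive curvature, the Bochner-type and parabolic estimates already used for Theorem~\ref{thm:eells sampson type for compact} furnish uniform higher-order bounds, so along a sequence $t_i\to\infty$ the maps $u(\cdot,t_i)$ converge smoothly to a limit $u_\infty$ with $E_H(u_\infty)=\lim_i E_H(t_i)=0$. Hence $\mathrm{d}(u_\infty)_H\equiv 0$; since $H$ is bracket generating and $M$ is connected, any two points are joined by a horizontal path, so $u_\infty$ is constant. Being a subelliptic harmonic map with potential, $\tau(u_\infty)=\tau_H(u_\infty)+\widetilde\nabla G(u_\infty)=\widetilde\nabla G(u_\infty)=0$, whence $u_\infty\in\Sigma_G$ and $\Sigma_G\neq\varnothing$. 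I expect the main obstacle to be the confinement step: everything downstream rests on it, so the crux is verifying the correct sign (that strict concavity of $G$ makes $G\circ u$ a supersolution) and the geodesic ODE comparison forcing $G\to-\infty$ at infinity; the regularity and compactness needed to extract $u_\infty$ can be imported from the machinery developed for the earlier theorems.
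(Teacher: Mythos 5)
Your confinement and constancy arguments are genuinely different from the paper's, and both of your key ideas are sound. The paper traps the image by proving decay of $\phi(t)=\sup_M|\partial u/\partial t|$: it applies a weighted maximum principle to $|\partial u/\partial t|^2$ with weight $\exp\bigl(C\int_0^t(1+\int_0^\tau\phi)^{-1}\,\mathrm{d}\tau\bigr)$, bootstraps to $\int_0^\infty\phi\,\mathrm{d}s<\infty$ and then to exponential decay $\phi(t)\le C_5e^{-C_6t}$, which bounds $\rho(u(x,t))\le C_1+\int_0^t\phi$. You instead observe that $(\partial_t-\Delta_H)(G\circ u)=|\widetilde\nabla G(u)|^2-\mathrm{trace}\,\mathrm{Hess}\,G(\mathrm{d}u_H,\mathrm{d}u_H)\ge 0$, so $\min_M G(u(\cdot,t))$ is non-decreasing by Lemma \ref{thm:subelliptic maximum principle} applied to $-G\circ u$, while the ODE comparison $g''(s)\le -C(1+s)^{-1}$ along minimizing geodesics forces $G\to-\infty$ at infinity; hence the image stays in the compact set $\{G\ge\min_M G(\bar u)\}$. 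This is correct and more elementary. Likewise, your route to constancy (monotonicity of $\int_M G(u)$ gives $\int_0^\infty E_H\,\mathrm{d}t<\infty$, convergence of $E_G$ and of $\int_M G(u)$ gives convergence of $E_H$, hence $E_H(t)\to 0$) is a valid alternative to the paper's elliptic argument at the limit, which shows $\Delta_H(-G\circ u_\infty)\ge 0$, invokes the maximum principle to make $-G\circ u_\infty$ constant, and uses strict concavity plus bracket generation to conclude $\mathrm{d}(u_\infty)_H=0$. (Note that the step-$2$ hypothesis enters your "imported machinery" through Lemma \ref{thm:step-2} and Lemma \ref{thm:energy density}, which are what bound $e(u)$ and allow Arzel\`a--Ascoli; and the identity $\tau(u_\infty)=0$ that you use at the very end requires choosing $t_i$ so that $\sup_M|\partial u/\partial t|(\cdot,t_i)\to 0$, as in the proof of Theorem \ref{thm:eells sampson type for compact}, not merely so that $u(\cdot,t_i)$ converges.)

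The genuine gap is the convergence claim itself: the proposition asserts that $u(\cdot,t)$ \emph{converges} to $u_\infty$ as $t\to+\infty$, whereas your argument only produces convergence along a sequence $t_i\to\infty$. The paper closes this by its exponential decay of $\phi$: $d_N(u(t),u(t_k))\le\int_{t_k}^t\phi(s)\,\mathrm{d}s\to 0$, so the flow is uniformly Cauchy in $t$. Your proof contains no control whatsoever on $|\partial u/\partial t|$, so nothing in it rules out, a priori, oscillation between different subsequential limits. The gap is patchable within your framework without importing the paper's decay estimates: since $\mathrm{Hess}\,G<0$, the set $\Sigma_G$ contains at most one point $y_0$ (a strictly concave function along a geodesic cannot have vanishing derivative at both endpoints), so $y_0$ is the unique maximum point of $G$; your minimum principle says $\min_M G(u(\cdot,t))$ is non-decreasing, and along $t_i$ it tends to $G(y_0)=\max_N G$, hence $G(u(x,t))\to G(y_0)$ uniformly; since the superlevel sets $\{G\ge G(y_0)-\epsilon\}$ are compact and shrink to $\{y_0\}$, this forces $u(\cdot,t)\to y_0$ uniformly. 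Without this (or the paper's decay argument), the statement as claimed is not proved.
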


For the second case that $M$ is a compact sub-Riemannian manifold corresponding to
a tense Riemannian foliation, we have
\begin{prop}\label{thm:hess assumption foliation}
Suppose $(M,H,g_H;g)$ is a compact sub-Riemannian manifold corresponding to
a tense Riemannian foliation with the property that $H$ is bracket generating for $TM$. Let $(N,h)$ be a complete Riemannian manifold with non-positive sectional curvature and let $\rho$ denote the distance function on $N$ from a fixed point $P_{0}\in N$. If, for some $C> 0$,
\begin{equation*}
\text{Hess}\,G(y)\le -C(1+\rho(y))^{-1}\cdot h
\end{equation*}
then $T=+\infty$ and $u$ converges to $u_{\infty}$, where $u_{\infty}$ is a constant. Moreover, $\Sigma_{G}\neq \varnothing$ in this case.
\end{prop}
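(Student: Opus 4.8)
The plan is to follow the strategy of Proposition~\ref{thm:hess assumption}, replacing the step-$2$ a priori estimates by those available for a tense Riemannian foliation. Since $\text{Hess}\,G \le -C(1+\rho)^{-1}\cdot h \le 0 \le C'\cdot h$ for any $C'>0$, Theorem~\ref{thm:long time existence for complete} immediately gives $T=+\infty$, so $u(\cdot,t)$ is defined for all $t\ge 0$. The first substantive step is a confinement estimate. Using the Euler--Lagrange structure $\tau(u)=\tau_H(u)+(\widetilde\nabla G)(u)$ and the identity $\Delta_H(G\circ u)=(\text{Hess}\,G)(\mathrm{d}u_H,\mathrm{d}u_H)+\langle\widetilde\nabla G,\tau_H(u)\rangle$, I would compute
\[
\Bigl(\frac{\partial}{\partial t}-\Delta_H\Bigr)(G\circ u)=-(\text{Hess}\,G)(\mathrm{d}u_H,\mathrm{d}u_H)+|\widetilde\nabla G|^2\ge 0,
\]
so $G\circ u$ is a subsolution for the subelliptic heat operator. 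The (degenerate) parabolic maximum principle on the compact $M$ then gives $\min_M(G\circ u)(\cdot,t)\ge \min_M G(\bar u)=:m_0$ for all $t$. Integrating the Hessian bound twice along a unit-speed minimizing geodesic $\gamma$ from $P_0$ yields $G(\gamma(s))\le G(P_0)+|\widetilde\nabla G(P_0)|\,s-C[(1+s)\ln(1+s)-s]\to-\infty$, uniformly in the direction of $\gamma$, so the sublevel set $\{G\ge m_0\}$ lies in a fixed ball $\bar B(P_0,R_0)$. Hence $u(\cdot,t)$ stays in the compact set $K=\bar B(P_0,R_0)$ for all $t$, and on $K$ the hypothesis sharpens to uniform strong concavity $\text{Hess}\,G\le-\delta\cdot h$ with $\delta=C(1+R_0)^{-1}>0$.

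Next I would extract energy decay. Integrating $\langle\widetilde\nabla G,\tau_H(u)\rangle$ by parts gives $\int_M\langle\widetilde\nabla G,\tau_H(u)\rangle\,\mathrm{d}\mu=-\int_M(\text{Hess}\,G)(\mathrm{d}u_H,\mathrm{d}u_H)\,\mathrm{d}\mu$, whence
\[
\frac{d}{dt}\int_M G(u)\,\mathrm{d}\mu=-\int_M(\text{Hess}\,G)(\mathrm{d}u_H,\mathrm{d}u_H)\,\mathrm{d}\mu+\int_M|\widetilde\nabla G|^2\,\mathrm{d}\mu\ge\delta\int_M|\mathrm{d}u_H|^2\,\mathrm{d}\mu\ge 0.
\]
Because $G\le\sup_N G<+\infty$, the quantity $\int_M G(u)\,\mathrm{d}\mu$ is nondecreasing and bounded above, hence convergent, which forces $\int_0^\infty\!\int_M|\mathrm{d}u_H|^2\,\mathrm{d}\mu\,dt<\infty$. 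On the other hand $E_G(u(\cdot,t))$ is nonincreasing along its gradient flow and bounded below (again since $G$ is bounded above), so it too converges; writing $E_H(u):=\tfrac12\int_M|\mathrm{d}u_H|^2\,\mathrm{d}\mu=E_G(u)+\int_M G(u)\,\mathrm{d}\mu$ as a sum of two convergent functions of $t$ shows that $\lim_{t\to\infty}E_H(u(\cdot,t))$ exists, and together with the integral bound this limit must be $0$. Thus $E_H(u(\cdot,t))\to 0$.

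Finally I would pass to the limit. The confinement to $K$ makes the target effectively compact, so the Folland--Stein $C^{2,\alpha}$-type estimates underlying the Eells--Sampson theorem for the tense foliation apply and produce a sequence $t_i\to\infty$ along which $u(\cdot,t_i)$ converges uniformly to a subelliptic harmonic map $u_\infty$ with potential $G$. Since $E_H(u(\cdot,t_i))\to 0$, the limit satisfies $\mathrm{d}(u_\infty)_H\equiv 0$; as $H$ is bracket generating, Chow's theorem forces $u_\infty$ to be constant, say $u_\infty\equiv y_\infty$. The equation $\tau(u_\infty)=\tau_H(u_\infty)+(\widetilde\nabla G)(u_\infty)=0$ then collapses to $(\widetilde\nabla G)(y_\infty)=0$, so $y_\infty\in\Sigma_G$ and $\Sigma_G\neq\varnothing$. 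I expect the main obstacle to be precisely the uniform a priori estimate needed for this compactness step: whereas in the step-$2$ case the positive lower bound $\eta_{\min}$ on the torsion term controls the vertical derivatives appearing in the Bochner formula for the density $e_H(u)$, here one must instead exploit the tenseness of the foliation — parallelism of the mean curvature vector field along the leaves for the Bott connection — to absorb the vertical and mixed terms, so that non-positivity of the target curvature together with the uniform concavity $\text{Hess}\,G\le-\delta\cdot h$ closes the differential inequality for $e_H(u)$.
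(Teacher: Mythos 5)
Your skeleton matches the paper's (long-time existence from Theorem \ref{thm:long time existence for complete}, confinement of the image to a compact set, compactness of the flow at infinity, constancy of the limit), and two of your steps are correct arguments that genuinely differ from the paper's. Your confinement step --- the minimum principle applied to $G\circ u$, valid because $(\Delta_H-\frac{\partial}{\partial t})(-G\circ u)=\mathrm{Hess}\,G(\mathrm{d}u_H,\mathrm{d}u_H)-|\widetilde\nabla G|^2\le 0$ is the wrong sign, so one applies Lemma \ref{thm:subelliptic maximum principle} to $-G\circ u$, combined with the observation that the decay hypothesis forces $G\to-\infty$ uniformly at infinity --- is simpler than the paper's route, which instead reruns the argument of Proposition \ref{thm:hess assumption}: an evolution inequality for $|\partial u/\partial t|^2$, a bootstrap to exponential decay of $\sup_M|\partial u/\partial t|$, and only then the bound $\rho(u(\cdot,t))\le C_0$. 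Likewise your constancy argument ($\int_M G(u)\,\mathrm{d}\mu$ monotone and bounded above forces $\int_0^\infty E_H\,\mathrm{d}t<\infty$; monotone $E_G$ then forces $E_H(u(\cdot,t))\to 0$; bracket generation kills $\mathrm{d}(u_\infty)_H$) is a clean alternative to the paper's, which instead shows $-G\circ u_\infty$ is a subsolution of $\Delta_H$, hence constant, and uses strict concavity to get $\mathrm{d}(u_\infty)_H=0$.

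There are, however, two genuine gaps. The first is the compactness step itself: you explicitly defer ``the uniform a priori estimate needed for this compactness step'' as an expected obstacle, but this estimate is the \emph{only} place where tenseness enters, so leaving it open leaves the proposition unproved precisely for the class of manifolds it concerns. Moreover the mechanism you guess at --- closing a differential inequality for $e_H(u)$ --- is not the right one: the Bochner computation for $e_H$ alone produces the mixed term $2u^I_iu^I_{\alpha k}T^{\alpha}_{ik}$, which can only be absorbed by the good square $(u^I_{\alpha k})^2$ arising from $\Delta_H e_V$, so no inequality closes at the level of $e_H$. What the paper actually proves (Lemma \ref{thm:riemannian foliation}) is a closed inequality for the \emph{vertical} density: tenseness gives $\zeta^k_{,\alpha}=0$ and $R^j_{k\alpha k}=0$, whence $(\Delta_H-\frac{\partial}{\partial t})e_V(u)\ge 0$ once $\mathrm{Hess}\,G\le 0$ and $\mathrm{Sec}_N\le 0$, so $E_V(u(\cdot,t))$ is nonincreasing; Lemma \ref{thm:energy density} then converts the bound on $E=E_H+E_V$ into a uniform pointwise bound on $e(u)$, which is what Arzel\`a--Ascoli and the hypoelliptic regularity (Theorem \ref{thm:subelliptic regularity theory}) need. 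Your proof must either invoke or reprove this lemma. The second gap: the proposition asserts that $u$ \emph{converges} to $u_\infty$, while your argument yields only subsequential convergence, and a priori different subsequences could converge to different constants. This is easy to repair using your own confinement: on the compact set $K$ one has $\mathrm{Hess}\,G\le-\delta\cdot h$ with $\delta=C(1+R_0)^{-1}$, so the evolution inequality (\ref{eq:control by hess}) gives $(\frac{\partial}{\partial t}-\Delta_H)|\partial u/\partial t|^2\le-2\delta\,|\partial u/\partial t|^2$, and Lemma \ref{thm:subelliptic maximum principle} yields $\sup_M|\partial u/\partial t|(t)\le e^{-\delta t}\sup_M|\partial u/\partial t|(0)$; hence $\int_0^\infty\sup_M|\partial u/\partial t|\,\mathrm{d}t<\infty$ and $u(\cdot,t)$ is uniformly Cauchy as $t\to\infty$. (This patch is in fact shorter than the bootstrap in Proposition \ref{thm:hess assumption}, but it has to be said.)
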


Suppose now that $N$ is embedded isometrically into some Euclidean space $\mathbb{R}^{K}$. Let $\mathfrak J: N \to \mathbb{R}^{K}$ denote this embedding and $A(y): T_yN\times T_yN \to (T_yN)^{\perp}$ denote its second fundamental form. We also assume that the potential $G$ is the restriction to $N$ of some smooth function $\bar G: \mathbb{R}^{K} \to \mathbb{R}$. For simplicity, we write $y$ instead of $\mathfrak J(y)$ for all $y \in N$. Then Equation (\ref{eq:heat flow u}) becomes

\begin{equation}\label{eq:euclidean heat flow}
\left\{\begin{gathered}
\Delta_H u-\frac{\partial u}{\partial t}=A(u)(\mathrm{d}u_H,\mathrm{d}u_H)-P(D\,\bar G(u)) \\
u(x,0)=\bar u(x)
\end{gathered}\right.
\end{equation}
where $P:\mathbb{R}^{K} \to T_yN$ is the orthogonal projection onto the tangent space of $N$ at $y$, $D$ is the canonical Riemannian connection of $\mathbb{R}^{K}$.

\begin{prop}\label{thm:product assumption}
Let $(M,H,g_H;g)$ be either a compact step-$2$ sub-Rieman-\\nian manifold or a compact sub-Riemannian manifold corresponding to
a tense Riemannian foliation with the property that $H$ is bracket generating for $TM$. Let $(N,h)$ be a complete Riemannian manifold with non-positive sectional curvature and let $\mathfrak J: N \to \mathbb{R}^{K}$ be an isometric embedding. Suppose for all $y\in N, Y\in T_{y} N$,
\begin{equation}\label{eq:product assumption}
\langle A(y)(Y,Y),y\rangle_{\mathbb R^{K}}+\vert Y\vert^2_{\mathbb R^{K}}-\langle \widetilde\nabla G(y),y\rangle_{\mathbb R^{K}} \ge 0.
\end{equation}
In the first case, if $\text{Hess}\,G<\frac{\eta_{min}}{2}\cdot h$,
then $T=+\infty$ and $u$ subconverges to $u_{\infty}$, where $u_{\infty}$ is a smooth subelliptic harmonic map with potential $G$. In the second case, if $\text{Hess}\,G\le 0$, then $T=+\infty$ and $u$ subconverges to $u_{\infty}$, where $u_{\infty}$ is a constant. Moreover, $\Sigma_{G}\neq \varnothing$ in this case.
\end{prop}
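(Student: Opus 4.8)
The plan is to handle both cases by a single three-step scheme --- long-time existence, confinement of the image of the flow, and extraction of a limit --- and to bring in the case-dependent Hessian bounds only at the last stage. Long-time existence is immediate from Theorem~\ref{thm:long time existence for complete}: since $M$ is compact and $N$ is complete with nonpositive sectional curvature, and since in the first case $\text{Hess}\,G<\frac{\eta_{min}}{2}\cdot h$ gives $\text{Hess}\,G\le C\cdot h$ with $C=\eta_{min}/2>0$, while in the second case $\text{Hess}\,G\le 0\le C\cdot h$, that theorem already yields $T=+\infty$. The role of hypothesis~\eqref{eq:product assumption} is not existence but the a priori bound that keeps the image in a fixed compact set.

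The heart of the argument is this confinement. I would study the extrinsic function $\varphi=\langle u,u\rangle_{\mathbb{R}^K}$ using the embedded flow~\eqref{eq:euclidean heat flow}. Writing $\{e_a\}$ for a local $g_H$-orthonormal frame of $H$ and $u_a=\mathrm{d}u_H(e_a)$, a direct computation gives
\[
\frac{1}{2}(\Delta_H-\partial_t)\varphi=\sum_a\big[\langle A(u)(u_a,u_a),u\rangle_{\mathbb{R}^K}+|u_a|^2_{\mathbb{R}^K}\big]-\langle \widetilde\nabla G(u),u\rangle_{\mathbb{R}^K}.
\]
Hypothesis~\eqref{eq:product assumption} encodes exactly the two facts needed to sign the right-hand side: replacing $Y$ by $sY$ and letting $s\to\infty$ forces the quadratic part $\langle A(y)(Y,Y),y\rangle_{\mathbb{R}^K}+|Y|^2_{\mathbb{R}^K}\ge 0$ for every $Y$, while taking $Y=0$ forces $\langle \widetilde\nabla G(y),y\rangle_{\mathbb{R}^K}\le 0$. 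Hence $(\Delta_H-\partial_t)\varphi\ge 0$, i.e. $\varphi$ is a subsolution of the subelliptic heat operator. As $M$ is compact and $\Delta_H$ is a H\"{o}rmander-type (hence hypoelliptic) operator, the parabolic maximum principle gives $\sup_M|u(\cdot,t)|^2_{\mathbb{R}^K}\le \sup_M|\bar u|^2_{\mathbb{R}^K}=:R^2$ for all $t$. Since $\mathfrak J$ is a proper isometric embedding of the complete manifold $N$, the set $K_0:=N\cap\overline{B_R(0)}$ is compact, and the entire flow remains in $K_0$.

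With the image trapped in the fixed compact set $K_0$, the second fundamental form, the curvature of $N$, $\widetilde\nabla G$ and $\text{Hess}\,G$ are all bounded on the relevant region, so every a priori estimate of the compact-target theory applies with constants depending only on $K_0$. Energy monotonicity $\frac{d}{dt}E_G(u(t))=-\int_M|\partial_t u|^2\,\mathrm{d}\mu\le 0$ together with the lower bound $E_G\ge-(\max_{K_0}|G|)\,\mathrm{vol}(M)$ yields $\int_0^{\infty}\!\int_M|\partial_t u|^2\,\mathrm{d}\mu\,\mathrm{d}t<\infty$, hence a sequence $t_i\to\infty$ along which $\|\partial_t u(\cdot,t_i)\|_{L^2}\to 0$. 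The Bochner estimate --- in the first case controlled by the torsion quantity $\eta_{min}$ dominating $\text{Hess}\,G$ exactly as in Theorem~\ref{thm:eells sampson type for compact}, and in the foliation case by the Weitzenb\"{o}ck formula available under the tense hypothesis --- gives a uniform bound on $|\mathrm{d}u_H|$, which subelliptic (Schauder-type) regularity upgrades to uniform higher-order bounds. Extracting a convergent subsequence produces $u_\infty$ with $\tau(u_\infty)=0$; this already settles the first case, where $u_\infty$ is a smooth subelliptic harmonic map with potential $G$.

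It remains to prove constancy in the foliation case. For the limit $u_\infty$, which satisfies $\tau_H(u_\infty)=-\widetilde\nabla G(u_\infty)$, I would compute
\[
\Delta_H\big(G\circ u_\infty\big)=\mathrm{tr}_H\,\text{Hess}\,G(\mathrm{d}u_H,\mathrm{d}u_H)-|\widetilde\nabla G(u_\infty)|^2,
\]
whose right-hand side is $\le 0$ when $\text{Hess}\,G\le 0$. Because the foliation is tense, the mean-curvature term in $\Delta_H$ integrates away, so $\int_M\Delta_H(G\circ u_\infty)\,\mathrm{d}\mu=0$; combined with the sign this forces the integrand to vanish identically, whence $\widetilde\nabla G(u_\infty)\equiv 0$. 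Thus $u_\infty(M)\subset\Sigma_G$, so $\Sigma_G\neq\varnothing$, and $u_\infty$ is in fact an honest subelliptic harmonic map. Feeding this back into the tense-foliation Bochner formula for $|\mathrm{d}u_H|^2$ with nonpositive target curvature forces $\mathrm{d}u_H\equiv 0$, and since $H$ is bracket generating this propagates to $\mathrm{d}u\equiv 0$, so $u_\infty$ is constant. I expect the main obstacle to be the confinement step: checking that~\eqref{eq:product assumption} really does make $\varphi$ a subsolution and then invoking a maximum principle valid for the degenerate, merely hypoelliptic operator $\Delta_H-\partial_t$, and verifying that the single fixed compact region $K_0$ suffices to transfer all compact-target estimates uniformly in $t$; the foliation constancy is secondary and rests squarely on the tense hypothesis, which is what legitimizes $\int_M\Delta_H(\cdot)\,\mathrm{d}\mu=0$ and the Weitzenb\"{o}ck integration.
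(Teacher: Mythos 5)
Your long-time existence, confinement and subconvergence steps are the paper's own proof, in the same order: the paper likewise shows that (\ref{eq:product assumption}) makes $\langle u,u\rangle_{\mathbb{R}^K}$ a subsolution of $\Delta_H-\frac{\partial}{\partial t}$ along (\ref{eq:euclidean heat flow}), applies Lemma \ref{thm:subelliptic maximum principle} to trap $u(t)(M)$ in a fixed compact set, obtains $T=+\infty$ from Theorem \ref{thm:long time existence for complete}, and then gets subconvergence because $E_V$ is uniformly bounded (Lemma \ref{thm:step-2} in the step-$2$ case, Lemma \ref{thm:riemannian foliation} in the tense case), which feeds into the compact-target machinery of Theorem \ref{thm:eells sampson type for compact} via Lemma \ref{thm:energy density}. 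One point where you are more careful than the paper: the paper simply asserts that (\ref{eq:product assumption}) ``is equivalent to'' the subsolution inequality, but summing (\ref{eq:product assumption}) over a horizontal frame yields $m$ copies of $\langle\widetilde\nabla G(u),u\rangle_{\mathbb{R}^K}$ against the single copy occurring in the flow identity, so the conclusion genuinely requires your scaling/splitting argument ($\langle A(y)(Y,Y),y\rangle_{\mathbb{R}^K}+\vert Y\vert^2_{\mathbb{R}^K}\ge 0$ for all $Y$, and $\langle\widetilde\nabla G(y),y\rangle_{\mathbb{R}^K}\le 0$).

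The genuine gap is your last step, constancy of $u_\infty$ in the foliation case. The claim that, once $\widetilde\nabla G(u_\infty)\equiv 0$, ``the tense-foliation Bochner formula with nonpositive target curvature forces $\mathrm{d}u_H\equiv 0$'' is false. Integrating the vertical Bochner identity of Lemma \ref{thm:riemannian foliation} for the limit map only forces $(u^I_{\alpha k})^2$ and the target-curvature term to vanish; the horizontal Hessian $(u^I_{ik})^2$ sits in the full Bochner formula of Lemma \ref{thm:subelliptic bochner formula} next to indefinite torsion and $\zeta$ terms, and nonpositive curvature of $N$ gives no control of $\mathrm{d}u_H$ --- harmonic maps into nonpositively curved targets are not constant in general. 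Concretely, take $M$ a compact Heisenberg nilmanifold (whose vertical foliation is totally geodesic, hence tense, with $H$ bracket generating), $N=S^1\subset\mathbb{R}^2$, $G\equiv 0$: then $\mathrm{Hess}\,G\le 0$, $\langle A(y)(Y,Y),y\rangle+\vert Y\vert^2=0$ and $\langle\widetilde\nabla G(y),y\rangle=0$, so every hypothesis of the second case holds, yet each nonzero class of $H^1(M;\mathbb{Z})$ contains a nonconstant subelliptic harmonic map, and a flow started in a nontrivial homotopy class cannot subconverge uniformly to a constant. What the paper does at this point is quote the argument of Proposition \ref{thm:hess assumption}: $\Delta_H(-G\circ u_\infty)\ge 0$ forces $-G\circ u_\infty$ to be constant, hence $\mathrm{Trace}_g\,\mathrm{Hess}\,G(\mathrm{d}u_H,\mathrm{d}u_H)=0$, and then the \emph{strict} concavity of $G$ (available in Proposition \ref{thm:hess assumption} because $\mathrm{Hess}\,G\le -C(1+\rho)^{-1}h$) gives $\mathrm{d}u_H=0$ and, by bracket generation, constancy. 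So the ingredient that produces constancy is strict negativity of $\mathrm{Hess}\,G$, not the curvature of $N$; under the bare hypothesis $\mathrm{Hess}\,G\le 0$ neither your route nor the paper's citation closes this step (and the example above shows it cannot be closed as stated). Two minor remarks: $\int_M\Delta_H\phi\,\mathrm{d}v_g=0$ holds for every smooth $\phi$ on the closed manifold $M$ simply because $\Delta_H\phi=\mathrm{div}_g(\nabla^H\phi)$, so tenseness plays no role there; and your integration argument does correctly yield $\widetilde\nabla G(u_\infty)\equiv 0$, hence $\Sigma_G\neq\varnothing$, which is the part of the second-case conclusion that does survive.
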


\section{Preliminaries}

First, we recall some facts on sub-Riemannian manifolds. Suppose that $M$ is a connected smooth $(m+d)$--dimensional manifold and $H$ is a rank $m$ subbundle of tangent bundle $TM$. For any $x\in M$ and any open neighborhood $U$ of $x$, we denote the space of smooth sections of $H$ on $U$ by $\Gamma (U, H)$ and define $\{\Gamma^j (U, H)\}_{j\ge 1}$ inductively
by
\begin{equation*}
\begin{split}
\Gamma^1 (U, H) &= \Gamma (U, H) \\
\Gamma^{j+1} (U, H) &= \Gamma^j (U, H)+[\Gamma^1 (U, H), \Gamma^j (U, H)]
\end{split}
\end{equation*}
for each positive integer $j$, where $[\cdot,\cdot]$ denotes the Lie bracket of vector fields. At each point $x$, we
obtain a subspace $H^{(j)}_x$ of the tangent space $T_x M$, that is,
\begin{equation*}
H^{(j)}_x (U, H)=\{ X(x):X\in \Gamma^j (U, H)\}.
\end{equation*}
The subbundle $H$ is said to be $r$-step bracket generating for $TM$, if $H^{(r)}_x = T_xM$ for each $x \in M$ (cf.\cite{MR862049}, \cite{MR1867362}). In this paper, we always assume that $H$ satisfies the $r$-step bracket generating condition for some $r\ge 2$.

A sub-Riemannian manifold is a triple $(M,H,g_H)$, where $g_H$ is a fiberwise metric on $H$. According to \cite{MR862049}, there always exists a Riemannian metric $g$ on $M$ such that $g\vert_H=g_H$, where $g$ is referred to as a Riemannian extension of $g_H$. Henceforth, we always fix a Riemannian extension $g$ on the sub-Riemannian manifold $(M,H,g_H)$, and consider the quadruple $(M,H,g_H,g)$. In terms of the metric $g$, the tangent bundle $TM$ has the following orthogonal decomposition:
\begin{equation}\label{eq:orthogonal decomposition}
TM=H\oplus V
\end{equation}
which induces the projections $\pi_H : TM \to H$ and $\pi_V : TM \to V$. The distribution $H$ and $V=H^{\perp}$ are called the horizontal distribution and the vertical distribution respectively on $(M,H,g_H,g)$.

We consider the generalized Bott connection on sub-Riemannian manifolds, which is given by (cf.\cite{MR3587668},\cite{baudoin2015logsobolev},\cite{MR4236537})
\begin{equation}\label{eq:bott connection}
\nabla^{\mathfrak B}_X Y=\begin{cases}
\pi_H(\nabla^R_X Y),\qquad &X,Y\in\Gamma(H) \\
\pi_H([X,Y]),\qquad &X\in\Gamma(V),Y\in\Gamma(H) \\
\pi_V([X,Y]),\qquad &X\in\Gamma(H),Y\in\Gamma(V) \\
\pi_V(\nabla^R_X Y),\qquad &X,Y\in\Gamma(V)
\end{cases}
\end{equation}
where $\nabla^R$ denotes the Riemannian connection of $g$. It is convenient for computations on sub-Riemannian manifolds by using the above connection, since $\nabla^{\mathfrak B}$ preserves the decomposition (\ref{eq:orthogonal decomposition})
and it also satisfies
\begin{equation*}
\nabla^{\mathfrak B}_X g_H = 0 \quad \text{and} \quad \nabla^{\mathfrak B}_Y g_V = 0
\end{equation*}
for any $X \in H$ and $Y \in V$, where $g_V = g\vert_V$. However, in general, $\nabla^{\mathfrak{B}}$ is not compatible to $g$.

Let $(M^{m+d},H^m,g_H;g)$ be a step-$r$ sub-Riemannian manifold with a rank $m$ subbundle $H$. For a smooth function $u$ on $M$, its horizontal gradient is the unique vector field $\nabla^H u$ satisfying $(\nabla^H u)_q = \pi_H (\nabla^{\mathfrak{B}}u)_q$, for any $q\in M$. Let $\{e_i\}^{m}_{i=1}$ and $\{e_\alpha \}^{m+d}_{\alpha=m+1}$ be local orthonormal frame fields of the distributions $H$ and $V$ on an open domain $\Omega$ of $(M,g)$ respectively. We call the local orthonormal frame field $\{e_A\}^{m+d}_{A=1}$ an adapted frame field for $(M,H,g_H;g)$. As a result,
\begin{equation}\label{eq:horizontal gradient}
 \nabla^H u=\sum\limits^{m}_{i=1}(e_iu)e_i.
\end{equation}
Since $H$ has the bracket generating property for $TM$, $u$ is constant if and only if $\nabla^H u=0$.
The divergence of a vector field $X$ on $M$ is given by
\begin{equation*}
 div_gX=\sum\limits^{m+d}_{A=1}\lbrace e_A\langle X,e_A\rangle - \langle X,\nabla^R_{e_A}e_A\rangle\rbrace.
\end{equation*}
Then one can define sub-Laplacian of a function $u$ on $(M,H,g_H;g)$ as
\begin{equation}\label{eq:sub laplacian}
\Delta_H u=div_g(\nabla^H u).
\end{equation}
By the divergence theorem, it is clear that $\Delta_H$ is symmetric, that is,
\begin{equation*}
\int_M v(\Delta_H u)\,\mathrm{d}v_g=\int_M u(\Delta_H v)\,\mathrm{d}v_g=-\int_M \langle\nabla^H u,\nabla^H v\rangle\,\mathrm{d}v_g
\end{equation*}
for any $u,v\in C^{\infty}_{0}(M)$. In terms of (\ref{eq:horizontal gradient}) and (\ref{eq:sub laplacian}), we deduce that
\begin{equation}\label{eq:hormander type operator}
\begin{split}
\Delta_H u &=\sum\limits^m_{i=1}\lbrace e_i\langle\nabla^H u,e_i\rangle-\langle\nabla^H u,\nabla^{\mathfrak B}_{e_i}e_i\rangle\rbrace-\langle\nabla^H u,\zeta\rangle \\
&=\sum\limits^m_{i=1}e^2_i(u)-(\sum\limits^m_{i=1}\nabla^{\mathfrak B}_{e_i}e_i+\zeta)(u)
\end{split}
\end{equation}
where $\zeta=\pi_H(\sum\limits_{\alpha}\nabla^R_{e_{\alpha}}e_{\alpha})$ is referred to as the mean curvature vector field of the vertical distribution $V$.

Suppose $X_1, \cdots , X_m, Y$ are $C^{\infty}$ vector fields on a manifold $\widetilde M$, whose commutators up to certain order span the tangent space at each point. The so-called H{\"o}rmander operator
\begin{equation*}
\mathfrak{D}=\sum\limits^m_{i=1} X_i^2+Y
\end{equation*}
were first studied by H{\"o}rmander in \cite{MR222474}.
He proved a celebrated result that $\mathfrak{D}$
is hypoelliptic. In other words, if $u$ is a distribution defined on any open set $\Omega \subset \widetilde M$, such that $\mathfrak{D} u \in C^{\infty}(\Omega)$, then $u \in C^{\infty}(\Omega)$. In terms of (\ref{eq:hormander type operator}), we find that $\Delta_H$ is an operator of H{\"o}rmander type, and hence it is hypoelliptic on $M$. Furthermore, the operator $\Delta_H-\frac{\partial}{\partial t}$ is hypoelliptic as well.

Define
\begin{equation*}
S^p_k(\Delta_H,\Omega)=\{u\in L^p(\Omega)\vert e_{i_1}\cdots e_{i_s}(u)\in L^p(\Omega), 1\le i_1,\cdots,i_s\le m, 0\le s\le k\}
\end{equation*}
and
\begin{equation*}
\begin{split}
S^p_k(\Delta_H-\frac{\partial}{\partial t},\Omega\times [0,T))=\{&u\in L^p(\Omega\times [0,T))\vert \partial^l_t e_{i_1}\cdots e_{i_s}(u)\in L^p(\Omega\times [0,T)),\\
&1\le i_1,\cdots,i_s\le m, 2l+s\le k\}
\end{split}
\end{equation*}
for any nonnegative integer $k$. Due to regularity theory for hypoelliptic operators of Rothschild and Stein \cite{MR436223}, we have the following
\begin{theorem}\label{thm:subelliptic regularity theory}
Let $\mathfrak{D}=\Delta_{H}\,(resp.\ \Delta_{H}-\frac{\partial}{\partial t})$ and $\widetilde{M}=\Omega\,(resp.\ \Omega\times(0,T))$. Suppose $f\in L^p_{loc}(\widetilde{M})$, and
\begin{equation*}
\mathfrak{D}\,f=g\qquad on\quad \widetilde{M}.
\end{equation*}
If $g\in S^p_k(\mathfrak{D},\widetilde{M})$, then $\chi f\in S^p_{k+2}(\mathfrak{D},\widetilde{M})$ for any $\chi \in C^{\infty}_{0}(\widetilde{M})$, $1<p<\infty, k=0,1,2,\cdots$. In particular, the following inequality holds
\begin{equation*}
\Vert \chi f\Vert_{S^p_{k+2}(\mathfrak{D},\widetilde{M})}\le C_{\chi}(\Vert g\Vert_{S^p_{k}(\mathfrak{D},\widetilde{M})}+\Vert f \Vert_{L^p(\widetilde{M})})
\end{equation*}
where $C_{\chi}$ is a constant independent of $f$ and $g$.
\end{theorem}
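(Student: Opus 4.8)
The plan is to recognize $\mathfrak D$ as a Hörmander operator of step $r$ and then to invoke the interior regularity theory of Rothschild and Stein \cite{MR436223}, whose essential content is a base a priori estimate at order $k=0$ that is subsequently bootstrapped by differentiating the equation. First I would record the structural form of $\mathfrak D$. By (\ref{eq:hormander type operator}) we may write $\Delta_H=\sum_{i=1}^m e_i^2+Y$ with $Y=-\big(\sum_{i=1}^m\nabla^{\mathfrak B}_{e_i}e_i+\zeta\big)$ a smooth first-order horizontal field; since the $r$-step bracket generating hypothesis forces $e_1,\dots,e_m$ and their iterated commutators up to order $r$ to span $T_xM$ at every point, $\Delta_H$ is exactly a Hörmander operator of the form $\sum X_i^2+Y$. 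In the parabolic case $\mathfrak D=\Delta_H-\partial_t$ on $\Omega\times(0,T)$, the field $\partial_t$ takes the role of the drift $Y$ and is assigned weight $2$ in the anisotropic (parabolic) dilations, so that $e_1,\dots,e_m,\partial_t$ satisfy Hörmander's condition on the product and $\mathfrak D$ is hypoelliptic, as already noted after (\ref{eq:hormander type operator}). Both choices of $\mathfrak D$ thus fall under the same framework.

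Next I would establish the base case $k=0$, which is the heart of the matter. Following Rothschild--Stein, one lifts $e_1,\dots,e_m$ to vector fields $\widetilde e_1,\dots,\widetilde e_m$ on a product $\widetilde M\times\mathbb R^N$ that are free up to step $r$ and whose frozen versions at each point approximate, to top order, the canonical left-invariant generators of the free nilpotent group $G_{m,r}$. The model sub-Laplacian on $G_{m,r}$ possesses a homogeneous fundamental solution with singularity governed by the homogeneous dimension $Q$; transplanting its convolution kernel produces a parametrix for $\mathfrak D$ on $\widetilde M$. Composing this parametrix with any two horizontal derivatives yields operators that are singular integrals of Calderón--Zygmund type with respect to the Carnot--Carathéodory metric, hence bounded on $L^p$ for $1<p<\infty$. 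This gives the fundamental interior inequality: for cutoffs $\chi,\chi'$ with $\chi'\equiv1$ near $\mathrm{supp}\,\chi$,
\[
\|\chi f\|_{S^p_2(\mathfrak D,\widetilde M)}\le C\big(\|\chi' g\|_{L^p(\widetilde M)}+\|\chi' f\|_{L^p(\widetilde M)}\big),
\]
which is precisely the assertion for $k=0$.

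The remaining claim follows by induction on $k$. Assuming the estimate through order $k-1$, I would apply an arbitrary horizontal monomial $X^I=e_{i_1}\cdots e_{i_s}$ with $s\le k$ to the equation, obtaining
\[
\mathfrak D\,(X^I f)=X^I g+[\mathfrak D,X^I]\,f .
\]
Here $X^I g\in L^p$ with $\|\chi' X^I g\|_{L^p}\le\|g\|_{S^p_k}$, while the commutator $[\mathfrak D,X^I]$, computed in the filtered algebra generated by the $e_i$, is a differential operator of weighted order at most $s+1\le k+1$: its coefficients are the structure functions of the adapted frame and their derivatives, and although the brackets $[e_i,e_{i_j}]$ introduce vertical directions, each vertical derivative is controlled by two horizontal ones through the subelliptic (Folland--Stein) Sobolev embeddings, so $\|\chi'[\mathfrak D,X^I]f\|_{L^p}\le C\|\chi'' f\|_{S^p_{k+1}}$. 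The inductive hypothesis bounds the right-hand side by $C(\|g\|_{S^p_{k-1}}+\|f\|_{L^p})$. Feeding these into the $k=0$ estimate applied to $X^I f$ (with $|I|=k$) controls two further horizontal derivatives of $X^I f$, that is, horizontal derivatives of $f$ up to order $k+2$, yielding $\|\chi f\|_{S^p_{k+2}}\le C(\|g\|_{S^p_k}+\|f\|_{L^p})$ after summing over $I$ and relabelling the nested cutoffs.

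The main obstacle is the base estimate of the second paragraph: the lifting-and-approximation construction and the $L^p$ boundedness of the resulting singular integrals constitute the genuine analytic content and are exactly what \cite{MR436223} supplies, so that once that citation is admitted the step is immediate. If one instead wished to be self-contained, the delicate bookkeeping lies in the weighted-order count for $[\mathfrak D,X^I]$ together with the embedding that trades vertical derivatives for pairs of horizontal ones; the parabolic case additionally requires checking that the weight-$2$ assignment to $\partial_t$ is compatible with the dilation structure, which holds because $\Delta_H$ has $t$-independent coefficients and therefore commutes with $\partial_t$.
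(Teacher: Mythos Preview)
The paper does not prove this theorem at all: it is stated immediately after the sentence ``Due to regularity theory for hypoelliptic operators of Rothschild and Stein \cite{MR436223}, we have the following'' and is simply quoted as a known result, with no argument given. Your proposal therefore goes well beyond what the paper does; you are effectively sketching the content of the cited reference rather than reproducing anything in the present paper.

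As a sketch of the Rothschild--Stein argument your outline is sound in its broad strokes: the identification of $\mathfrak D$ as a H\"ormander operator (already noted in the paper), the lifting to a free nilpotent structure to obtain a base $S^p_2$ estimate via a homogeneous parametrix and Calder\'on--Zygmund theory, and the inductive bootstrap by commuting horizontal derivatives through the equation. One point deserving a little more care is the weighted-order count for $[\mathfrak D,X^I]$: since a single bracket $[e_i,e_j]$ may be vertical and hence of weighted order $2$, the naive computation $[e_i^2,e_j]=e_i[e_i,e_j]+[e_i,e_j]e_i$ looks like order $3$ rather than $s+1=2$; the resolution is exactly the embedding you invoke (a vertical field is controlled by two horizontal derivatives), but this means the commutator term is bounded by $\|\chi'' f\|_{S^p_{s+1}}$ only after that embedding is applied, not before. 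With that caveat the induction closes as you describe. Since the paper treats the theorem as a black-box citation, there is no ``paper's approach'' to compare against beyond the reference itself.
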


To study the existence problem of subelliptic harmonic maps with potential, we also need some results about the heat kernel and subelliptic heat equation on compact sub-Riemannian manifolds. Let $K(x,y,t)$ be the heat kernel for $\Delta_H$ on a compact sub-Riemannian manifold $(M,H, g_H;g)$, that is
\begin{equation*}
\left\{\begin{gathered}
(\Delta_H-\frac{\partial}{\partial t})K(x,y,t)=0 \\
\underset{t\to 0}{\text{lim}}K(x,y,t)=\delta_x(y)
\end{gathered}\right.
\end{equation*}

According to \cite{MR2154760},\cite{Baudoin2018GeometricIO},\cite{MR755001} and \cite{MR862049}, we know that $K(x,y,t)$ exists. Some basic properties of $K(x,y,t)$ are listed as follows

(1) $K(x,y,t)\in C^{\infty}(M\times M\times \mathbb{R}^+)$;

(2) $K(x,y,t)=K(y,x,t)$ for $x,y\in M$ and $t>0$;

(3) $K(x,y,t)>0$ for $x,y\in M$ and $t>0$;

(4) $\int_M K(x,y,t)\mathrm{d}v_g=1$ for any $x\in M$;

(5) $K(x,y,t+s)=\int_M K(x,z,t)K(y,z,s)\,\mathrm{d}v_g(z)$.

\begin{lemma}\label{thm:subelliptic heat kernel gradient}(\cite[Lemma 3.3]{MR4236537})
For any $\beta\in (0,\frac 1 2)$, there exists $C_{\beta}> 0$ such that
\begin{equation*}
\int^t_0\int_M\vert\nabla^H_x K(x,y,s)\vert \,\mathrm{d}v_g(y) \,\mathrm{d}s \le C_{\beta} t^{\beta}
\end{equation*}
for $0<t<R_0$ for some positive constant $R_0$.
\end{lemma}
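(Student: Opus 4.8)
The plan is to reduce the time-integrated $L^1$ bound to a pointwise Gaussian estimate for the horizontal gradient of the heat kernel, and then to carry out a dyadic annular summation in the spatial variable. The key analytic input, available in this setting from the heat-kernel theory for H\"ormander-type operators on compact manifolds (cf. \cite{MR2154760}, \cite{MR755001}, \cite{MR862049}), is the small-time upper bound
\begin{equation*}
|\nabla^H_x K(x,y,s)| \le \frac{C}{\sqrt{s}\,V(x,\sqrt{s})}\exp\Big(-\frac{d(x,y)^2}{Cs}\Big),
\end{equation*}
valid for $0<s<R_0$ uniformly in $x,y\in M$, where $d(\cdot,\cdot)$ is the Carnot--Carath\'eodory distance associated with $(H,g_H)$ and $V(x,r)=\mathrm{vol}_g(B(x,r))$ is the volume of the corresponding ball. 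The second ingredient is the local volume doubling property of such balls on a compact step-$r$ manifold: there are $C_D\ge 1$ and $Q>0$ with $V(x,\lambda r)\le C_D\lambda^{Q}V(x,r)$ for all $x\in M$, $\lambda\ge 1$ and $0<r\le R_0$ (shrinking $R_0$ if necessary, with $Q=\log_2 C_D$).

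Granting these, I would fix $x$ and estimate the inner integral $\int_M|\nabla^H_x K(x,y,s)|\,\mathrm{d}v_g(y)$ by splitting $M$ into the central ball $B(x,\sqrt{s})$ and the dyadic annuli $A_k=\{y:2^k\sqrt{s}\le d(x,y)<2^{k+1}\sqrt{s}\}$, $k\ge 0$. On $B(x,\sqrt{s})$ the Gaussian factor is bounded by $1$ and the volume is exactly $V(x,\sqrt{s})$, while on $A_k$ the Gaussian factor is at most $e^{-4^k/C}$ and the volume is at most $V(x,2^{k+1}\sqrt{s})\le C_D\,2^{(k+1)Q}V(x,\sqrt{s})$. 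Summing, the factors $V(x,\sqrt{s})$ cancel and one obtains
\begin{equation*}
\int_M|\nabla^H_x K(x,y,s)|\,\mathrm{d}v_g(y)\le \frac{C}{\sqrt{s}}\Big(1+\sum_{k\ge 0}C_D\,2^{(k+1)Q}e^{-4^k/C}\Big)=:\frac{C_1}{\sqrt{s}},
\end{equation*}
with the series convergent and $C_1$ independent of $x$ and $s$.

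Finally I would integrate in time: $\int_0^t C_1 s^{-1/2}\,\mathrm{d}s=2C_1\sqrt{t}$. After further shrinking $R_0$ so that $R_0\le 1$, for $0<t<R_0$ one has $\sqrt{t}=t^{1/2}\le t^{\beta}$ for every $\beta\in(0,\tfrac12)$, so the claim follows with $C_\beta=2C_1$; in fact this argument even yields the endpoint bound $\le 2C_1\,t^{1/2}$. The routine steps are the annular summation and the elementary time integral; the main obstacle is the first paragraph, namely justifying (or precisely quoting) the \emph{off-diagonal gradient} Gaussian bound for $\nabla^H_x K$ together with the \emph{uniform} local doubling constant on the compact manifold, both of which rest on the small-time parametrix and scaling analysis for subelliptic heat kernels rather than on the elementary kernel properties (1)--(5) listed above.
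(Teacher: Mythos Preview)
The paper does not prove this lemma; it is simply quoted from \cite[Lemma~3.3]{MR4236537}, so there is no in-paper argument to compare against.

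Your sketch is a correct and standard route to the estimate. The dyadic annular summation using doubling to absorb the Gaussian tail, followed by the elementary time integral $\int_0^t s^{-1/2}\,\mathrm{d}s=2\sqrt{t}$ and the observation that $t^{1/2}\le t^{\beta}$ for $t\le 1$ and $\beta<\tfrac12$, is exactly how one passes from a pointwise gradient heat-kernel bound to the stated $L^1$-in-$y$, $L^1$-in-$s$ estimate. You are also right that the genuine content lies entirely in the off-diagonal gradient Gaussian bound and the uniform local doubling on a compact bracket-generating manifold; one small comment is that the specific references you cite (\cite{MR2154760}, \cite{MR755001}, \cite{MR862049}) are not where those gradient bounds are proved---the precise estimates of the form you use go back to Jerison--S\'anchez-Calle and Kusuoka--Stroock (and are recorded, for instance, in Varopoulos--Saloff-Coste--Coulhon), while doubling follows from the Nagel--Stein--Wainger ball-box theorem. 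With those citations adjusted, your argument is a complete proof and in fact gives the sharper bound $C\,t^{1/2}$, from which the stated $C_\beta\,t^\beta$ for $\beta\in(0,\tfrac12)$ follows immediately on $0<t<R_0\le 1$.
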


The following lemma gives both a maximum principle,
and a mean value type inequality for subsolutions of the subelliptic heat equation.

\begin{lemma}\label{thm:subelliptic maximum principle}(\cite[Lemma 3.4]{MR4236537})
Let $M$ be a compact sub-Riemannian manifold. Suppose $\phi$ is a subsolution of the subelliptic heat equation satisfying
\begin{equation*}
(\Delta_H-\frac{\partial}{\partial t})\phi\ge 0
\end{equation*}
on $M\times[0,T)$ with initial condition $\phi(x,0)=\phi_0(x)$ for any $x\in M$. Then
\begin{equation*}
\underset M{\text{sup}}\,\phi(x,t)\le \underset M{\text{sup}}\,\phi_0(x).
\end{equation*}
Furthermore, if $\phi(x,t)$ is nonnegative, then there exist a constant $B$ and an integer $Q$ such that
\begin{equation*}
\underset {x\in M}{\text{sup}}\,\phi(x,t)\le Bt^{-\frac Q 2}\int_M\phi_0(y) \,\mathrm{d}v_g(y)
\end{equation*}
for $0<t<min(R^2_0,T)$, where $R_0$ is as in Lemma \ref{thm:subelliptic heat kernel gradient}.
\end{lemma}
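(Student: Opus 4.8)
The plan is to derive both assertions from a single integral comparison, namely that a subsolution is dominated by the heat-kernel evolution of its initial data. I would fix $x\in M$ and $t\in(0,T)$ and study the auxiliary function
\[
F(s)=\int_M K(x,y,t-s)\,\phi(y,s)\,dv_g(y),\qquad 0\le s<t.
\]
Using the smoothness of $K$ on $M\times M\times\mathbb{R}^{+}$ (property (1)) and compactness of $M$ to differentiate under the integral sign, together with $\partial_s K(x,y,t-s)=-\Delta_{H,y}K(x,y,t-s)$ from the heat equation satisfied by $K$ in its second slot (by the symmetry property (2)), I would obtain
\[
F'(s)=\int_M K(x,y,t-s)\big[\partial_s\phi-\Delta_H\phi\big](y,s)\,dv_g(y),
\]
where the derivative hitting $K$ has been transferred onto $\phi$ by the self-adjointness of $\Delta_H$ on the boundaryless compact $M$. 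Since $K>0$ (property (3)) and $(\Delta_H-\partial_t)\phi\ge0$, this gives $F'(s)\le0$, so $F$ is non-increasing on $[0,t)$.

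Next I would take $s\to t^{-}$. As $t-s\to0^{+}$ the kernel acts as an approximate identity (from $\lim_{\tau\to0}K(x,y,\tau)=\delta_x(y)$), and since $\phi(\cdot,s)\to\phi(\cdot,t)$ uniformly on the compact $M$, one gets $\lim_{s\to t^{-}}F(s)=\phi(x,t)$. Monotonicity then yields the fundamental comparison
\[
\phi(x,t)\le F(0)=\int_M K(x,y,t)\,\phi_0(y)\,dv_g(y).
\]
The first assertion is now immediate: bounding $\phi_0(y)\le\sup_M\phi_0$ and using $\int_M K(x,y,t)\,dv_g(y)=1$ (property (4)) gives $\phi(x,t)\le\sup_M\phi_0$, and taking the supremum over $x$ finishes it.

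For the mean value inequality I would assume $\phi\ge0$ (hence $\phi_0\ge0$) and estimate the same representation by extracting the pointwise size of the kernel,
\[
\phi(x,t)\le\Big(\sup_{y\in M}K(x,y,t)\Big)\int_M\phi_0(y)\,dv_g(y).
\]
It then remains to invoke the small-time upper bound $K(x,y,t)\le B\,t^{-Q/2}$, valid uniformly for $0<t<R_0^{2}$, where $Q$ is the homogeneous dimension attached to the step-$r$ H\"ormander structure of $(M,H,g_H;g)$; these Gaussian-type estimates are exactly what the cited heat-kernel references (\cite{MR2154760},\cite{Baudoin2018GeometricIO},\cite{MR755001},\cite{MR862049}) provide, and the threshold $R_0$ is the same one appearing in Lemma \ref{thm:subelliptic heat kernel gradient}. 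Substituting this bound yields $\sup_{x\in M}\phi(x,t)\le B\,t^{-Q/2}\int_M\phi_0\,dv_g$ for $0<t<\min(R_0^{2},T)$.

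The main obstacle is not the algebra of the comparison but its rigorous justification: legitimizing the differentiation under the integral and the integration by parts against $K$ (resting on the joint smoothness of $K$ and the self-adjointness of $\Delta_H$ on the compact boundaryless $M$), and, more substantially, securing the uniform small-time bound $K\le B\,t^{-Q/2}$ with the correct homogeneous dimension $Q$. This last point is a genuinely sub-Riemannian input, since $Q$ must be read from the growth vector of the bracket-generating distribution rather than the topological dimension; once it is quoted, the remaining steps are routine.
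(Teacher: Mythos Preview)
The paper does not prove this lemma at all; it is quoted as \cite[Lemma 3.4]{MR4236537} and stated without argument. So there is no in-paper proof to compare against.

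That said, your approach is the standard one and is essentially correct. The Duhamel-type monotonicity of $F(s)=\int_M K(x,y,t-s)\phi(y,s)\,dv_g(y)$, obtained by differentiating under the integral, using the heat equation for $K$ in its second argument via the symmetry $K(x,y,\tau)=K(y,x,\tau)$, and integrating by parts with the self-adjoint $\Delta_H$ on the closed manifold $M$, yields the representation inequality $\phi(x,t)\le\int_M K(x,y,t)\phi_0(y)\,dv_g(y)$. Both conclusions then follow at once: the maximum principle from property (4), and the mean value inequality from the on-diagonal bound $K(x,y,t)\le Bt^{-Q/2}$ for small $t$, which is indeed the content supplied by the cited heat-kernel references (and is what the original proof in \cite{MR4236537} uses as well). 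Your identification of the only nontrivial external input---the uniform small-time kernel bound with $Q$ the local homogeneous dimension---is accurate; once that is in hand the rest is routine. The limit $\lim_{s\to t^-}F(s)=\phi(x,t)$ deserves one line of care (uniform continuity of $\phi$ on compact time slabs plus the approximate-identity property of $K$), but this is standard.
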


Choose an adapted frame field $\lbrace e_A\rbrace_{A=1}^{m+d}$ in $(M,H,g_H;g)$, and denote its dual frame field by ${\lbrace\omega^A\rbrace}_{A=1}^{m+d}$. Henceforth, we will make use of the following convention on the ranges of induces in $M$:
\begin{equation*}
\begin{split}
1\le A,B,C,\ldots,&\le m+d;\quad 1\le i,j,k,\ldots,\le m;\\
m+1&\le\alpha,\beta,\gamma,\ldots,\le m+d,
\end{split}
\end{equation*}
and the Einstein summation convention.

Let $(N,h)$ be a Riemannian manifold with Riemannian connection $\widetilde\nabla$. We choose an orthonormal frame field $\lbrace\widetilde e_I\rbrace_{I=1,\ldots,n}$ in $(N,h)$ and let $\lbrace\widetilde\omega^I\rbrace$ be its dual frame field. We will make use of the following convention on the ranges of indices in $N$:
\begin{equation*}
I,J,K=1,\ldots,n.
\end{equation*}

For a smooth map $f:M\to N$, we have a connection $\nabla^{\mathfrak B}\otimes \widetilde\nabla^f$ in $T^*M\otimes f^{-1}TN$, where $\widetilde\nabla^f$ is the pull-back connection of $\widetilde\nabla$. Then we can define the second fundamental form with respect to the data $(\nabla^{\mathfrak{B}}, \widetilde\nabla^f)$ as follows
\begin{equation*}
\beta(f;\nabla^{\mathfrak{B}},\widetilde\nabla)(X,Y)=\widetilde\nabla^f_Y \mathrm{d}f(X)-\mathrm{d}f(\nabla^{\mathfrak{B}}_Y X).
\end{equation*}
Using the frame fields in $M$ and $N$, the differential $\mathrm{d}f$ and the second fundamental form $\beta$ can be written as
\begin{equation*}
\mathrm{d}f=f_A^I\omega^A\otimes\widetilde e_I,
\end{equation*}
and
\begin{equation*}
\beta=f^I_{AB}\omega^A\otimes\omega^B\otimes\widetilde e_I
\end{equation*}
respectively.
Apart from the differential $\mathrm{d}f$, we also consider two partial differentials $ \mathrm{d}f_H= \mathrm{d}f\vert_H \in\Gamma(H^*\otimes f^{-1}TN)$ and $\mathrm{d}f_V= \mathrm{d}f\vert_V \in\Gamma(V^*\otimes f^{-1}TN)$. By definition, we get
\begin{equation*}
\vert\mathrm{d}f_H\vert^2=(f^I_i)^2,\quad \vert \mathrm{d}f_V\vert^2=(f^I_{\alpha})^2,\quad \vert\mathrm{d}f\vert^2=(f^I_A)^2.
\end{equation*}
Set
\begin{equation*}
e_H(f)=\frac 1 2\vert\mathrm{d}f_H\vert^2,\quad e_V(f)=\frac 1 2\vert  \mathrm{d}f_V\vert^2,\quad e(f)=\frac 1 2\vert\mathrm{d}f\vert^2.
\end{equation*}
Then we can define the following two partial energies:
\begin{equation*}
\begin{split}
E_H(f)&=\int_M e_H(f)\,\mathrm{d}v_g=\frac 1 2\int_M\langle \,\mathrm{d}f(e_i),df(e_i)\rangle \,\mathrm{d}v_g, \\
E_V(f)&=\int_M e_V(f)\,\mathrm{d}v_g=\frac 1 2\int_M\langle \,\mathrm{d}f(e_{\alpha}),\,\mathrm{d}f(e_{\alpha})\rangle dv_g.
\end{split}
\end{equation*}                                                                                                                                                                       The partial energies $E_H(f)$ and $E_V(f)$ are referred to as horizontal and vertical energies respectively. Obviously the usual Dirichlet energy $E(f)$ satisfies
\begin{equation*}
E(f)=E_H(f)+E_V(f).
\end{equation*}
For any potential function $G\in C^{\infty}(N)$, we introduce the following energies:
\begin{equation*}
\begin{split}
E_P(f)&=\int_M -G(f) \,\mathrm{d}v_g, \\                                                                                                                                                           E_G(f)&=E_H(f)+E_P(f)=\int_M [e_H(f)-G(f)]\,\mathrm{d}v_g.
\end{split}
\end{equation*}
We call energies $E_P(f)$ and $E_G(f)$ potential energy and horizontal energy with potential $G$ respectively.
\begin{definition}
A map $f:(M,H,g_H;g)\to (N,h)$ is called a subelliptic harmonic map with potential $G$ if it is a critical point of the energy $E_G(f)$.
\end{definition}
For the purpose of deriving the Euler-Lagrange equation for $E_G$, we study a variation of $\lbrace f_t\rbrace_{\vert t\vert <\epsilon}$, which is a family of maps from $(M,H,g_H;g)$ to $(N,h)$ with $f_0=f$ and $\left.\frac{\partial f_t}{\partial t}\right|_{t=0}=\nu\in\Gamma (f^{-1}TN)$. Since $\widetilde\nabla$ is torsion-free, using divergence theorem on the compact manifold $M$, a computation gives
\begin{equation}\label{eq:first derivative f}
\begin{split}
\frac d {dt}E_G(f_t)\vert_{t=0}
&=\int_M\langle\widetilde\nabla^{f_t}_{\frac{\partial}{\partial t}}\mathrm{d}f_t(e_i),\mathrm{d}f_t(e_i)\rangle \,\mathrm{d}v_g\vert_{t=0}-\int_M\langle\nu,\widetilde\nabla G(f)\rangle\,\mathrm{d}v_g \\
&=\int_M\langle\widetilde\nabla^f_{e_i}\nu,\,\mathrm{d}f(e_i)\rangle \,\mathrm{d}v_g-\int_M\langle\nu,\widetilde\nabla G(f)\rangle \,\mathrm{d}v_g  \\
&=\int_M\langle\nu,\,\mathrm{d}f(\zeta)\rangle \,\mathrm{d}v_g-\int_M\langle\nu,\beta(e_i,e_i)\rangle \,\mathrm{d}v_g-\int_M\langle\nu,\widetilde\nabla G(f)\rangle \,\mathrm{d}v_g \\
&=-\int_M\langle\nu,\tau(f)\rangle \,\mathrm{d}v_g
\end{split}
\end{equation}
where
\begin{equation*}
\tau(f)=\tau_H(f)+\widetilde\nabla G=\beta(e_i,e_i)-\mathrm{d}f(\zeta)+\widetilde\nabla G
\end{equation*}
is called the subelliptic tension field of $f$ with potential $G$. Consequently, we have the following equivalent characterization of subelliptic harmonic maps with potential $G$.
\begin{prop}
A map $f:(M,H,g_H;G)\to (N,h)$ is a subelliptic harmonic map with potential $G$ if and only if it satisfies the Euler-Lagrange equation
\begin{equation}\label{eq:euler-lagrange}
\tau(f)=0.
\end{equation}
\end{prop}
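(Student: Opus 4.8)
The plan is to read off the equivalence directly from the first-variation formula already established in \eqref{eq:first derivative f}. By definition, $f$ is a subelliptic harmonic map with potential $G$ precisely when it is a critical point of $E_G$, that is, when $\frac{d}{dt}E_G(f_t)|_{t=0}=0$ for every smooth variation $\{f_t\}_{|t|<\epsilon}$ with $f_0=f$ and variation field $\nu=\frac{\partial f_t}{\partial t}|_{t=0}\in\Gamma(f^{-1}TN)$. The computation culminating in \eqref{eq:first derivative f} shows that this first derivative equals $-\int_M\langle\nu,\tau(f)\rangle\,\mathrm{d}v_g$, so the entire task reduces to comparing the vanishing of this integral (for all admissible $\nu$) with the pointwise vanishing of $\tau(f)$ on $M$.

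First I would dispose of the easy implication: if $\tau(f)=0$ throughout $M$, then the integrand $\langle\nu,\tau(f)\rangle$ vanishes identically for every $\nu$, whence $\frac{d}{dt}E_G(f_t)|_{t=0}=0$ and $f$ is critical. For the converse I would first record that every $\nu\in\Gamma(f^{-1}TN)$ is genuinely realized as the variation field of an admissible variation; the standard device is to set $f_t(x)=\exp_{f(x)}(t\,\nu(x))$ using the exponential map of $(N,h)$, which is smooth in $(t,x)$ and satisfies $f_0=f$ and $\frac{\partial f_t}{\partial t}|_{t=0}=\nu$. Granting this realization, criticality of $f$ forces $\int_M\langle\nu,\tau(f)\rangle\,\mathrm{d}v_g=0$ for all such $\nu$.

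The final step is a routine application of the fundamental lemma of the calculus of variations. Since $\tau(f)\in\Gamma(f^{-1}TN)$ is continuous, if $\tau(f)$ were nonzero at some point, one could choose $\nu=\chi\,\tau(f)$ with a nonnegative bump function $\chi\in C^\infty_0(M)$ supported near that point, making $\int_M\langle\nu,\tau(f)\rangle\,\mathrm{d}v_g=\int_M\chi\,|\tau(f)|^2\,\mathrm{d}v_g>0$, a contradiction; hence $\tau(f)\equiv 0$, which is exactly \eqref{eq:euler-lagrange}. The only point requiring a word of care is confirming that these localized $\nu$ are themselves admissible variation fields, which again follows from the exponential-map construction above. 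I expect no serious obstacle, since all the analytic content—the integration by parts on the compact manifold $M$ and the use of the torsion-freeness of $\widetilde\nabla$—is already absorbed into \eqref{eq:first derivative f}; the proposition is essentially a corollary of that identity.
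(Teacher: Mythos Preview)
Your proposal is correct and follows exactly the approach the paper intends: the paper states the proposition as an immediate consequence of the first-variation formula \eqref{eq:first derivative f} (introduced by the single word ``Consequently'') without writing out a proof, and you have simply spelled out the standard argument---the fundamental lemma of the calculus of variations together with the exponential-map realization of an arbitrary variation field---that the reader is expected to supply.
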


In order to solve (\ref{eq:euler-lagrange}), we follow Eells-Sampson's idea to deform a given smooth map $\bar u :M\to N$ along the gradient flow of the energy $E_G$. This is equivalent to solving the following subelliptic heat flow with potential $G$
\begin{equation}\label{eq:heat flow f}
\left\{\begin{gathered}
\frac{\partial f}{\partial t}=\tau(f)\\
\left. f\right|_{t=0}=\bar u
\end{gathered}\right.
\end{equation}
where $\tau(f(\cdot,t))$ is the subelliptic tension field with potential $G$ of $f(\cdot,t):(M,H,g_H;g)\to (N,h)$.

Before proving the existence theory, we intend to give the explicit formulations for both (\ref{eq:euler-lagrange}) and (\ref{eq:heat flow f}). According to the Nash embedding theorem, the isometric embedding $\mathfrak J:(N,h,\widetilde{\nabla})\to(\mathbb R^K,g_E,D)$ in some Euclidean space always exists, where $g_E$ denotes the standard Euclidean metric, $\widetilde{\nabla}$ and $D$ are the Riemannian connections of $(N,h)$ and $(\mathbb R^K,g_E)$ respectively. We also suppose that the potential $G$ is the restriction of some smooth function $\bar G:\mathbb R^K\to \mathbb R$ to $N$. Applying the composition formula for second fundamental forms \cite[page 14]{MR703510} to the maps $f:(M,\nabla^{\mathfrak B})\to(N,\widetilde{\nabla})$ and $\mathfrak J:(N,\widetilde{\nabla})\to(\mathbb R^K,D)$, we have
\begin{equation*}
\beta(\mathfrak J\circ f;\nabla^{\mathfrak B},D)(\cdot,\cdot)=\mathrm{d}\mathfrak J(\beta(f;\nabla^{\mathfrak B},\widetilde{\nabla})(\cdot,\cdot))+\beta(\mathfrak J;\widetilde{\nabla},D)(\mathrm{d}f(\cdot),\mathrm{d}f(\cdot)).
\end{equation*}
To simplify the notation, we identify $N$ with $\mathfrak J(N)$, and denote $\mathfrak J\circ f$  by $u$. Note that $u$ is a map from $M$ to $\mathbb R^K$.
Then, we get
\begin{equation}\label{eq:embeding flow formula 1}
\tau_H(u;\nabla^\mathfrak B,D)-tr_g\beta(\mathfrak J;\widetilde \nabla,D)(\mathrm{d}f_H,\mathrm{d}f_H)=\mathrm{d}\mathfrak J(\tau_H(f)).
\end{equation}

For the submanifold $N$, there exists a tubular neighborhood $B(N)$ of $N$ in $\mathbb R^K$ and a natural projection map
$\Pi:B(N)\to N$ which is a submersion over $N$. In fact, the map $\Pi$ maps any point in $B(N)$ to its closest point in $N$. Obviously its differential $\mathrm{d}\Pi:T_y\mathbb R^K\to T_y\mathbb R^K$ at a point $y\in N$ is given by the identity map when restricted to the tangent space $TN$ of $N$ and maps all the normal vectors to $N$ to the zero vector. Since $\Pi\circ\mathfrak J=\mathfrak J:N\hookrightarrow \mathbb R^K$ and $\beta(\mathfrak J;\widetilde\nabla,D)$ is normal to $N$, we have
\begin{equation*}
\beta(\mathfrak J;\widetilde\nabla,D)=\beta(\Pi;D,D)(\mathrm{d}\mathfrak J,\mathrm{d}\mathfrak J).
\end{equation*}
Choose the natural Euclidean coordinate system $\lbrace y^a\rbrace_{1\le a\le K}$ and set $u^a=y^a\circ u, \Pi^a=y^a\circ\Pi$. According to the above argument, we obtain
\begin{equation}\label{eq:euclidean coordinate 1}
\tau_H(u;\nabla^{\mathfrak B},D)=\Delta_H u^a\frac\partial{\partial y^a}
\end{equation}
and
\begin{equation}\label{eq:euclidean coordinate 2}
\begin{split}
tr_g\beta(\mathfrak J;\widetilde\nabla,D)(\mathrm{d}f_H,\mathrm{d}f_H)&=tr_g\beta(\Pi;D,D)(\mathrm{d}u_H,\mathrm{d}u_H) \\
&=\Pi^a_{bc}\langle\nabla^H u^b,\nabla^H u^c\rangle\frac\partial{\partial y^a}
\end{split}
\end{equation}
where $\Pi^a_{bc}=\frac{\partial^2 \Pi^a}{\partial y^b\partial y^c}$.
Note that
\begin{equation}\label{eq:embeding flow formula 2}
\mathrm{d}\mathfrak J(\tau(f))=\mathrm{d}\mathfrak J(\tau_H(f))+\mathrm{d}\Pi D (\bar G).
\end{equation}
Consequently, (\ref{eq:embeding flow formula 1}), (\ref{eq:euclidean coordinate 1}), (\ref{eq:euclidean coordinate 2}) (\ref{eq:embeding flow formula 2}) show that
\begin{equation}\label{eq:euclidean embeding flow}
\mathrm{d}\mathfrak J(\tau(f))=\left(\Delta_H u^a-\Pi^a_{bc}\langle\nabla^H u^b,\nabla^H u^c\rangle+\Pi^a_b(D\,\bar G^b)\right)\frac\partial{\partial y^a}.
\end{equation}
Therefore $f$ is a subelliptic harmonic map with potential $G$ if and only if $u=(u^a):M\to \mathbb R^K$ solves
\begin{equation*}
\Delta_H u^a-\Pi^a_{bc}\langle\nabla^H u^b,\nabla^H u^c\rangle+\Pi^a_b(D\,\bar G^b)=0, 1\le a,b,c\le K.
\end{equation*}
Based on the above explicit formulation for $\tau(u)$, we can establish the fact that the following system is equivalent to (\ref{eq:heat flow f}):
\begin{equation}\label{eq:tubular heat flow}
\left\{\begin{gathered}
\Delta_H u^a-\frac{\partial u^a}{\partial t}=\Pi^a_{bc}\langle\nabla^H u^b,\nabla^H u^c\rangle-\Pi^a_b(D\,\bar G^b(u)) \\
u^a(x,0)=\bar u^a(x), \qquad 1\le a,b,c\le K
\end{gathered}\right.
\end{equation}
whenever $\text{Im} u\subset B(N)$ for $t\in [0,T)$, where $\bar u^a=y^a\circ \bar u$.

To this end, we need to show $u(x,t)\in N$ for all $(x,t)\in M\times[0,T)$, where $[0,T)$ is the maximal existence domain of $u$.
Define a map $\rho:B(N)\to \mathbb{R}^K$ by
\begin{equation}\label{eq:definition of normal rho}
\rho(y)=y-\Pi(y),\quad y\in B(N).
\end{equation}
It is easy to see $\rho(y)$ is normal to $N$ and $\rho(y)=0$ is equivalent to $y\in N$.
Differentiating both sides of (\ref{eq:definition of normal rho}) twice, we have
\begin{equation*}
\rho^a_b=\delta^a_b-\Pi^a_b
\end{equation*}
and
\begin{equation*}
\rho^a_{bc}=-\Pi^a_{bc}
\end{equation*}
where $\rho^a_b=\frac{\partial\rho^a}{\partial y^b}$, $\Pi^a_b =\frac{\partial\Pi^a}{\partial y^b}$ and $\rho^a_{bc}=\frac{\partial \rho^a}{\partial y^b\partial y^c}$. Using the composition formula for second fundamental forms \cite{MR703510} and equation (\ref{eq:tubular heat flow}), we have
\begin{equation}\label{eq:normal part}
\begin{split}
(\Delta_H\rho(u))^a&=\rho^a_b\Delta_H u^b+\rho^a_{bc}\langle\nabla^H u^b,\nabla^H u^c\rangle\\
&=\Delta_H u^a-\Pi^a_b\Delta_H u^b-\Pi^a_{bc}\langle\nabla^H u^b,\nabla^H u^c\rangle\\
&=\rho^a_b\frac{\partial u^b}{\partial t}+\Pi^a_b(\frac{\partial u^b}{\partial t}-\Delta_H u^b+D\,\bar G^b).
\end{split}
\end{equation}
Note that $d\Pi(\frac{\partial u}{\partial t}-\Delta_H u+D\,\bar G)$ is tangent to $N$ and $\rho(u)$ is normal to $N$, then from (\ref{eq:normal part}) we obtain
\begin{equation*}
\rho^a(u)(\Delta_H\rho(u))^a=\rho^a(u)\rho^a_b(u)\frac{\partial u^b}{\partial t}.
\end{equation*}
In terms of divergence theorem and (\ref{eq:normal part}), we deduce that
\begin{equation*}
\begin{split}
\frac{\partial}{\partial t}\int_M(\rho^a(u))^2\mathrm{d}v_g&=2\int_M\rho^a(u)\rho^a_b(u)\frac{\partial u^b}{\partial t}\mathrm{d}v_g \\
&=2\int_M\rho^a(u)(\Delta_H\rho(u))^a\mathrm{d}v_g \\
&=-2\int_M\vert\nabla_H\rho(u)\vert^2\mathrm{d}v_g \\
&\le 0.
\end{split}
\end{equation*}
Thus,
\begin{equation*}
\int_M\vert\rho(u(x,t))\vert^2\mathrm{d}v_g
\end{equation*}
is non-increasing in $t$. In particular, if $\rho(u(x,0))=0$, then $\rho(u(x,t))=0$ for all $t\in [0,T)$, which means $u(x,t)\in N$ for all $(x,t)\in M\times[0,T)$.

\section{short time existence}\label{sec:flows}
In this section, we will establish the short-time existence result of (\ref{eq:heat flow u}).
To achieve this, we need the following Bochner type inequality for $e(f)$.

\begin{lemma}\label{thm:subelliptic bochner formula}
Let $(M,H,g_H;g)$ be a compact sub-Riemannian manifold and let $(N,h)$ be a Riemannian manifold with non-positive sectional curvature. Suppose that $f:M\to N$ is a smooth map. Set $\tau^I_H=f^I_{kk}-\zeta^k f^I_k$ and $\tau^I=\tau^I_H+[(\widetilde\nabla G)(f)]^I$. Then one has
\begin{equation}
\begin{split}
\Delta_H e(f)-f^I_i\tau^I_{,i}-f^I_{\alpha}\tau^I_{,\alpha}&\ge -C_{\epsilon}e_H(f)-\epsilon e_V(f)+(f^I_{ik})^2 \\
&\quad+\frac{1}{2}(f^I_{\alpha k})^2-\text{Hess}\,G(f_i,f_i)-\text{Hess}\,G(f_{\alpha},f_{\alpha})
\end{split}
\end{equation}
for any given $\epsilon>0$, where $C_{\epsilon}$ is a positive number depending only on $\epsilon$ and
\begin{equation*}
\underset{M,i,j,k,\alpha}{\text{sup}}\lbrace\vert\zeta^k\vert,\vert\zeta^k_{,i}\vert,\vert\zeta^k_{,\alpha}\vert,\vert T^{\alpha}_{ij}\vert,\vert T^{\alpha}_{ij,k}\vert,\vert R^j_{kik}\vert,\vert R^j_{k\alpha k}\vert\rbrace.
\end{equation*}
In particular, we have
\begin{equation}\label{eq:bochner formula}
\Delta_H e(f)-f^I_i\tau^I_{\,i}-f^I_{\alpha}\tau^I_{\,\alpha}\ge -C_{\epsilon}e(f)-\text{Hess}\,G(f_i,f_i)-\text{Hess}\,G(f_{\alpha},f_{\alpha}).
\end{equation}
\end{lemma}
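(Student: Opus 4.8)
The plan is to isolate the contribution of the potential $G$, which enters $\tau$ only linearly, and to reduce the estimate to a Weitzen\"ock-type computation for the horizontal energy density. Writing $e(f)=\frac12\big((f^I_i)^2+(f^I_\alpha)^2\big)$ and recalling from (\ref{eq:hormander type operator}) that $\Delta_H=\sum_k e_k^2-(\sum_k\nabla^{\mathfrak{B}}_{e_k}e_k+\zeta)$, I would first apply $\Delta_H$ to $e(f)$ using the pull-back connection $\widetilde\nabla^f$ together with $\nabla^{\mathfrak{B}}$. Up to correction terms coming from the failure of $\nabla^{\mathfrak{B}}$ to be $g$-compatible (which are of the same bounded type as the torsion terms treated below), this produces the Bochner identity
\begin{equation*}
\Delta_H e(f)=(f^I_{Ak})^2+f^I_A\,\Delta^{\mathfrak{B}}_H f^I_A,\qquad (f^I_{Ak})^2=(f^I_{ik})^2+(f^I_{\alpha k})^2,
\end{equation*}
where $\Delta^{\mathfrak{B}}_H f^I_A=\sum_k(\nabla^2_{e_k,e_k}-\nabla_{\nabla^{\mathfrak{B}}_{e_k}e_k+\zeta})f^I_A$ is the rough horizontal Hessian of the components $f^I_A$.

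Next I would commute the horizontal covariant derivatives so as to rewrite $f^I_A\,\Delta^{\mathfrak{B}}_H f^I_A$ in terms of the tension field. Since $\tau^I_H=f^I_{kk}-\zeta^k f^I_k$, interchanging $\nabla_{e_k}$ with $\nabla_{e_A}$ and tracing yields $f^I_A\,\Delta^{\mathfrak{B}}_H f^I_A=f^I_A\,\tau^I_{H,A}$ plus three families of commutator terms: (i) the pull-back of the target curvature, of the schematic form $-\langle R^N(\mathrm{d}f(e_k),\mathrm{d}f(e_A))\mathrm{d}f(e_k),\mathrm{d}f(e_A)\rangle$; (ii) the source curvature terms involving $R^j_{kik}$, $R^j_{k\alpha k}$ of $\nabla^{\mathfrak{B}}$ paired with $\mathrm{d}f$; and (iii) the torsion terms carrying $T^\alpha_{ij}$, the mean curvature $\zeta^k$, and their horizontal derivatives $T^\alpha_{ij,k}$, $\zeta^k_{,i}$, $\zeta^k_{,\alpha}$. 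Moving the divergence term to the left produces exactly the combination $\Delta_H e(f)-f^I_i\tau^I_{H,i}-f^I_\alpha\tau^I_{H,\alpha}$. Because $\nabla^{\mathfrak{B}}$ has torsion coupling $H$ and $V$, the mixed second derivatives $f^I_{\alpha k}$ appear both as a genuine nonnegative term and inside these commutators.

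I would then analyze signs. By the hypothesis that $N$ has non-positive sectional curvature, the target-curvature term (i) is nonnegative and may be discarded in a lower bound; this is the one and only place the curvature assumption is used. The source terms in (ii) and (iii) are bounded by the quantities listed in the stated supremum, which are finite since $M$ is compact. Applying Young's inequality with a free parameter, I would absorb the cross terms pairing $f^I_{\alpha k}$ against the bounded torsion coefficients at the cost of half of $(f^I_{\alpha k})^2$, and split the cross terms pairing the vertical gradient $f^I_\alpha$ against bounded curvature/torsion coefficients into an $-\epsilon e_V(f)$ and an $-C_\epsilon e_H(f)$ piece, leaving the good terms $(f^I_{ik})^2+\frac12(f^I_{\alpha k})^2$. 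Finally the potential is incorporated: since $\tau^I=\tau^I_H+[(\widetilde\nabla G)(f)]^I$ and $\widetilde\nabla^f_{e_A}[(\widetilde\nabla G)(f)]=\text{Hess}\,G(\mathrm{d}f(e_A),\cdot)$, one has $f^I_i\,[(\widetilde\nabla G)(f)]^I_{,i}=\text{Hess}\,G(f_i,f_i)$ and $f^I_\alpha\,[(\widetilde\nabla G)(f)]^I_{,\alpha}=\text{Hess}\,G(f_\alpha,f_\alpha)$, so replacing $\tau_H$ by $\tau$ on the left subtracts precisely $\text{Hess}\,G(f_i,f_i)+\text{Hess}\,G(f_\alpha,f_\alpha)$, which reappears with a minus sign on the right. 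Collecting everything gives the first inequality.

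The main obstacle I anticipate is the bookkeeping of the torsion-induced coupling in the commutation step: because $\nabla^{\mathfrak{B}}$ is neither the Levi-Civita connection nor $g$-compatible, commuting horizontal derivatives generates a variety of terms mixing $H$ and $V$ through $T^\alpha_{ij}$, and one must verify that each is either of the bounded type appearing in the supremum or absorbable by the available fraction of $(f^I_{\alpha k})^2$. Retaining the coefficient $\frac12$ rather than sacrificing the entire vertical gradient term is the delicate point. Once the refined inequality is in hand, the simpler form (\ref{eq:bochner formula}) follows at once by fixing $\epsilon$ once and for all, discarding the nonnegative terms $(f^I_{ik})^2+\frac12(f^I_{\alpha k})^2$, and using $e_H(f),e_V(f)\le e(f)$ to merge $-C_\epsilon e_H(f)-\epsilon e_V(f)$ into a single $-C_\epsilon e(f)$.
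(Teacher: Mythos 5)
Your proposal is correct and takes essentially the same approach as the paper: the paper simply quotes the commutation/Bochner identity for $\Delta_H e(f)$ (with its source-curvature, torsion, and target-curvature terms) from \cite{MR4236537} rather than re-deriving it, substitutes $\tau^I_H=\tau^I-[(\widetilde\nabla G)(f)]^I$ to produce the $-\text{Hess}\,G(f_i,f_i)-\text{Hess}\,G(f_\alpha,f_\alpha)$ terms, discards the target-curvature terms by non-positivity, and estimates the remaining cross terms by Schwarz/Young exactly as you describe, including sacrificing precisely half of $(f^I_{\alpha k})^2$ against the term $2f^I_i f^I_{\alpha k}T^{\alpha}_{ik}$. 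The only cosmetic difference is your hedge about metric-incompatibility corrections, which in fact do not arise here, since $\Delta_H$ involves only horizontal derivatives and $\nabla^{\mathfrak B}$ preserves $g_H$ and $g_V$ in horizontal directions.
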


\begin{proof}[Proof of Lemma \ref{thm:subelliptic bochner formula}]
Let $T=\frac 1 2(T^{A}_{BC}\omega^B\wedge\omega^C)\otimes e_{A}$ and $\Omega^A_B=\frac 1 2 R^A_{BCD}\omega^C\wedge\omega^D$ be the torsion and curvature of $\nabla^{\mathfrak B}$ respectively and let $\widetilde\Omega^I_J=\frac 1 2 \widetilde R^I_{JKL}\widetilde\omega^K\wedge\widetilde\omega^L$ be the curvature of $\widetilde\nabla$.
Denote covariant derivative of $\zeta^k$ and $T^{A}_{BC}$ by $\zeta^k_{,A}$ and $T^{A}_{BC,D}$ respectively.
From \cite{MR4236537}, we know
\begin{equation*}
\begin{split}
\Delta_H e(f)&=\Delta_H e_H(f)+\Delta_H e_V(f)\\
&=(f^I_{ik})^2+f^I_if^I_{ikk}-\zeta^kf^I_if^I_{ik}+(f^I_{\alpha k})^2+f^I_{\alpha}f^I_{\alpha kk}\\
&=(f^I_{ik})^2+(f^I_{\alpha k})^2+f^I_i\tau^I_{H,i}+f^I_{\alpha}\tau^I_{H,\alpha}+f^I_i\zeta^k_{,k}f^I_k \\
&\quad+\zeta^k f^I_i f^I_{\alpha}T^{\alpha}_{ki}+f^I_i f^I_j R^j_{kik}+2f^I_{i}f^I_{\alpha k}T^{\alpha}_{ik}-f^I_i f^K_k\widetilde R^I_{KJL}f^J_i f^L_k \\
&\quad+f^I_i f^I_{\alpha}T^{\alpha}_{ik,k}+f^I_{\alpha}\zeta^k_{,\alpha}f^I_k+f^I_{\alpha}f^I_j R^j_{k\alpha k}-f^I_{\alpha}f^K_k\widetilde R^I_{KJL}f^J_{\alpha}f^L_k.
\end{split}
\end{equation*}
Since $\tau^I=\tau^I_H+[(\widetilde\nabla G)(f)]^I$, then
\begin{equation*}
\begin{split}
\Delta_H e(f)&=(f^I_{ik})^2+(f^I_{\alpha k})^2+f^I_i\tau^I_{,i}+f^I_{\alpha}\tau^I_{,\alpha}-f^I_i G_{IJ}f^J_i-f^I_{\alpha}G_{IJ}f^J_{\alpha}+f^I_i\zeta^k_{,k}f^I_k \\
&\quad+\zeta^k f^I_i f^I_{\alpha}T^{\alpha}_{ki}+f^I_i f^I_j R^j_{kik}+2f^I_{i}f^I_{\alpha k}T^{\alpha}_{ik}-f^I_i f^K_k\widetilde R^I_{KJL}f^J_i f^L_k \\
&\quad+f^I_i f^I_{\alpha}T^{\alpha}_{ik,k}+f^I_{\alpha}\zeta^k_{,\alpha}f^I_k+f^I_{\alpha}f^I_j R^j_{k\alpha k}-f^I_{\alpha}f^K_k\widetilde R^I_{KJL}f^J_{\alpha}f^L_k.
\end{split}
\end{equation*}
where $\text{Hess}\,G=(G_{IJ})$.
Using Schwarz inequality and curvature assumption of $N$, we have that
\begin{equation*}
\begin{split}
&f_i^I \zeta^k_{,i}f^I_k+f_i^If_j^IR_{kik}^j\ge-C_1e_H(f),\\
&\zeta^kf_i^If^l_{\alpha}T^{\alpha}_{ki}+f_i^If^I_{\alpha}T^{\alpha}_{ik,k}+f^I_{\alpha}\zeta^k
_{,\alpha}f^I_k+f^I_{\alpha}f_j^IR^j_{k\alpha k}\ge-C_2(\epsilon)e_H(f)-\epsilon e_V (f),\\
&2f_i^If^I_{\alpha k}T^{\alpha}_{ik}\ge-C_3e_H(f)-\frac{1}{2}(f^I_{\alpha k})^2,\\
&f_i^If_k^K \widetilde R^I_{KJL}f_i^Jf_k^L+f^I_{\alpha}f_k^K \widetilde R^I_{KJL}f_{\alpha}^Jf_k^L \le 0.
\end{split}
\end{equation*}
These estimates give (\ref{eq:bochner formula}).
\end{proof}

\begin{prop}\label{thm:short time existence}
Let $(M^{m+d},H,g_H;g)$ be a compact sub-Riemannian manifold, and $(N,h)$ be a compact Riemannian manifold. The heat flow (\ref{eq:heat flow u}) admits a unique smooth solution defined
on a maximal existence domain $M\times[0, T)$.
\end{prop}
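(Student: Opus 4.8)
The plan is to convert the geometric flow into a semilinear subelliptic parabolic system in Euclidean space and then run a Banach fixed-point argument built on the heat kernel $K$ for $\Delta_H$. Using the Nash embedding $\mathfrak{J}:N\to\mathbb{R}^K$ already fixed above, and noting that $\bar u(M)$ is compact so its image lies well inside the tubular neighborhood $B(N)$, I would study the equivalent system (\ref{eq:tubular heat flow}) for the $\mathbb{R}^K$-valued map $u=\mathfrak{J}\circ f$. Its right-hand side splits into the quadratic gradient term $\Pi^a_{bc}\langle\nabla^H u^b,\nabla^H u^c\rangle$ and the lower-order potential term $\Pi^a_b(D\,\bar G^b(u))$; since $N$ is compact and $\bar G$ is smooth, on $B(N)$ the coefficients $\Pi^a_{bc}$, $\Pi^a_b$ and the potential gradient $D\,\bar G$ are uniformly bounded with bounded derivatives, so both terms are locally Lipschitz in $(u,\nabla^H u)$. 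Once a solution of (\ref{eq:tubular heat flow}) staying in $B(N)$ is produced with $u(\cdot,0)\in N$, the monotonicity of $\int_M|\rho(u)|^2\,\mathrm{d}v_g$ established at the end of the previous section forces $u(x,t)\in N$ for all $t$, so that $f=\mathfrak{J}^{-1}\circ u$ solves (\ref{eq:heat flow u}).

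Next I would recast (\ref{eq:tubular heat flow}) in mild (Duhamel) form, $u^a(\cdot,t)=P_t\bar u^a+\int_0^t P_{t-s}F^a(u)(\cdot,s)\,\mathrm{d}s$, where $P_t$ is the heat semigroup with kernel $K$ and $F^a(u)$ denotes the right-hand side of (\ref{eq:tubular heat flow}). I would work in the Banach space $X_T$ of continuous $\mathbb{R}^K$-valued maps on $M\times[0,T]$ whose horizontal gradient $\nabla^H u$ is also continuous and bounded, with norm $\sup(|u|+|\nabla^H u|)$, and consider the closed ball centered at the time-constant extension of $\bar u$. The zeroth-order part of $F$ is bounded, hence contributes only an $O(T)$ term; the crucial quadratic term is handled by differentiating the Duhamel integral, transferring the horizontal derivative onto the kernel, and invoking the gradient estimate of Lemma \ref{thm:subelliptic heat kernel gradient}, namely $\int_0^t\int_M|\nabla^H_x K(x,y,t-s)|\,\mathrm{d}v_g(y)\,\mathrm{d}s\le C_\beta t^\beta$. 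Together with the mean-value bound of Lemma \ref{thm:subelliptic maximum principle} this shows, for $T$ small, that the solution map sends the ball into itself and is a contraction there, so the Banach fixed-point theorem yields a unique mild solution on a short interval $[0,T_0]$.

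The remaining task is regularity and passage to the maximal interval. Since $\Delta_H-\frac{\partial}{\partial t}$ is hypoelliptic and the fixed point is a weak solution of (\ref{eq:tubular heat flow}) with bounded right-hand side, I would bootstrap using the subelliptic estimates of Theorem \ref{thm:subelliptic regularity theory}: feeding $u\in S^p_k$ into $F$ gives $F(u)\in S^p_k$, hence $u\in S^p_{k+2}$, and iterating with Sobolev embedding yields $u\in C^\infty\big(M\times(0,T_0)\big)$, with smoothness up to $t=0$ following from the smoothness of $\bar u$. Uniqueness is inherited from the contraction, and since the short-time existence length depends only on the $C^1$-size of the data (with $|u|$ controlled by compactness of $N$), a standard continuation argument extends the solution to a maximal domain $M\times[0,T)$. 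The main obstacle is precisely the quadratic gradient nonlinearity: unlike the Riemannian case there is no clean pointwise $t^{-1/2}$ gradient bound for the subelliptic kernel, so the integral estimate of Lemma \ref{thm:subelliptic heat kernel gradient} (with $\beta<\tfrac12$) must be deployed carefully to close the fixed-point estimate in the gradient norm. By contrast the potential term, being lower order and smooth, is comparatively harmless and only adds a benign $O(T)$ contribution, which is why introducing the potential $G$ does not alter the structure of the short-time argument.
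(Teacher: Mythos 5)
Your proposal follows essentially the same route as the paper: rewrite the flow as the Euclidean system (\ref{eq:tubular heat flow}) via the Nash embedding, solve it in mild (Duhamel) form with the subelliptic heat kernel, close the estimate on $\vert u\vert+\vert\nabla^H u\vert$ for small time using Lemma \ref{thm:subelliptic heat kernel gradient} and Lemma \ref{thm:subelliptic maximum principle} (the paper runs the Picard iteration explicitly and shows it is Cauchy in $C^1_H$, which is the same contraction you invoke via the Banach fixed-point theorem), keep the image on $N$ by the monotonicity of $\int_M\vert\rho(u)\vert^2\,\mathrm{d}v_g$, and bootstrap smoothness with Theorem \ref{thm:subelliptic regularity theory}. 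The one point where you diverge is uniqueness: the paper proves it by a separate maximum-principle argument, applying Lemma \ref{thm:subelliptic maximum principle} to $e^{-\widetilde C t}\Psi$ with $\Psi=\sum_a(u^a-v^a)^2$, whereas you inherit it from the contraction; that works, but strictly speaking it only gives uniqueness among mild solutions lying in your chosen ball, so it needs the small supplementary remark that any solution of (\ref{eq:tubular heat flow}) is such a mild solution on a short initial interval, followed by a connectedness/continuation argument.
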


\begin{proof}[Proof of Proposition \ref{thm:short time existence}]
Writing $u=(u^a(x,t))_{1\le a\le K}$, the subelliptic harmonic map heat flow with potential $G$ becomes
\begin{equation*}
\left\{\begin{gathered}
(\Delta_H -\frac{\partial}{\partial t}) u=F(x,t) \\
u(x,0)=\bar u(x)
\end{gathered}\right.
\end{equation*}
where $F(x,t)=\left(\Pi^a_{bc}\langle\nabla^H u^b,\nabla^H u^c\rangle-\Pi^a_b(D\,\bar G^b(u))\right)$.

By Duhamel's principle, a sequence of approximate solutions can be defined inductively as follows:
\begin{equation}\label{eq:inductive solution}
\begin{split}
u_0(x,t)&=\int_M K(x,y,t)\bar u(y)\,\mathrm{d}v_g(y)\\
u_k(x,t)&=u_0(x,t)-\int^t_0\int_M K(x,y,t-s)F_{k-1}(y,s)\,\mathrm{d}v_g(y)\,\mathrm{d}s
\end{split}
\end{equation}
where
\begin{equation}\label{eq:inductive function}
F_{k-1}(y,s)=\left((\Pi^a_{bc}\langle\nabla^H u^b_{k-1},\nabla^H u^c_{k-1}\rangle)-\Pi^a_b(D\,\bar G^b(u_{k-1}))\right),\quad k\ge 1.
\end{equation}
It is clear that $u_0$ and $u_k:M\to \mathbb{R}^{K}$ solve respectively
\begin{equation*}
\left\{\begin{gathered}
(\Delta_H -\frac{\partial}{\partial t}) u_0=0 \\
u_0(x,0)=\bar u(x)
\end{gathered}\right.
\end{equation*}
and
\begin{equation*}
\left\{\begin{gathered}
(\Delta_H -\frac{\partial}{\partial t}) u_k=F_{k-1}(x,t) \\
u_k(x,0)=\bar u(x),\qquad k\ge 1.
\end{gathered}\right.
\end{equation*}
Set
\begin{equation*}
\begin{split}
\Lambda &=\underset{B(N),a,b,c,d}{\text{sup}}\lbrace{\vert\Pi^a_{b}\vert,\vert\Pi^a_{bc}\vert,\vert\frac{\partial\Pi^a_{bc}}{\partial y^d}\vert}\rbrace \\
P&=\underset{B(N),I,J}{\text{sup}}\lbrace\vert D\,\bar G\vert, \vert \bar G_{IJ}\vert\rbrace,
\end{split}
\end{equation*}
where $(y^1,\dotsc,y^K)$ are coordinates of $\mathbb{R}^K$, $B(N)$ is the tubular neighborhood of $N$ and $\Pi$ is the closest point map over $B(N)$. We denote
\begin{equation}\label{eq:inductive energy}
p_{k-1}(t)=\underset{M\times[0,t)}{\text{sup}}\sqrt{e_H(u_{k-1})},\qquad k\ge 1,
\end{equation}
which is a non-decreasing function of $t$. From (\ref{eq:inductive function}) and (\ref{eq:inductive energy}), we obtain
\begin{equation}\label{eq:inductive function estimate}
\underset{M\times[0,t)}{\text{sup}}\lvert F_{k-1}(x,s)\rvert \le \Lambda(p^2_{k-1}(t)+P).
\end{equation}
Since $\int_M K(x,y,t)dy=1$, we find that
\begin{equation}\label{eq:initial estimate}
\vert u_0\vert\le\Vert\bar u\Vert_{C^0}=\underset{x\in M}{\text{sup}}\sqrt{\sum\limits_{a=1}^K(\bar u^a(x))^2},
\end{equation}
where $\Vert\cdot\Vert_{C^0}$ denotes the $C^0$-norm of functions or tensor fields on $M$. Using (\ref{eq:inductive solution}) (\ref{eq:inductive function}) (\ref{eq:inductive function estimate}) and (\ref{eq:initial estimate}), we deduce that
\begin{equation}\label{eq:distance estimate}
\begin{split}
\vert u_k-u_0\vert&\le\Lambda(p^2_{k-1}+P)t \\
\vert u_k\vert&\le\Lambda t(p^2_{k-1}+P)+\Vert\bar u\Vert_{C^0}.
\end{split}
\end{equation}
Note that $\tau_H(u_0)=\Delta_H u_0$ for the map $u_0:M\to \mathbb{R}^K$. By Lemma \ref{thm:subelliptic bochner formula}, we derive that
\begin{equation*}
(\Delta_H-\frac{\partial}{\partial t})(e^{-Ct}e(u_0))\ge 0.
\end{equation*}
As a result, Lemma \ref{thm:subelliptic maximum principle} gives that
\begin{equation*}
e^{-Ct}e(u_0)\le e(\bar u)
\end{equation*}
and thus
\begin{equation}\label{eq:initial energy estimate}
p_0(t)\le\sqrt{e^{Ct}e(\bar u)}.
\end{equation}
In view of Lemma \ref{thm:subelliptic heat kernel gradient}, (\ref{eq:inductive solution}) and (\ref{eq:inductive function estimate}), we have
\begin{equation*}
\begin{split}
\vert\nabla^H_x u_k(x,t)\vert&\le\vert\nabla^H_x u_0\vert+\int^t_0\int_M\vert\nabla^H_x K(x,y,t-s)\vert\cdot\vert F_{k-1}(y,s)\vert \,\mathrm{d}v_g(y) \\
&\le\vert\nabla^H_x u_0\vert+C_1\Lambda(p^2_{k-1}(t)+P)t^{\beta},
\end{split}
\end{equation*}
which yields
\begin{equation}\label{eq:inductive inequality}
\begin{split}
p_k(t)&\le C_1\Lambda (p^2_{k-1}(t)+P)t^{\beta}+p_0(t) \\
& \le C_1\Lambda (p_{k-1}(t)+\sqrt P)^2t^{\beta}+p_0(t).
\end{split}
\end{equation}
Choosing $\delta$ sufficiently small, it follows from (\ref{eq:initial energy estimate}) that
\begin{equation*}
C_1\Lambda\delta^{\beta}(p_0(\delta)+\sqrt P)\le C_1\Lambda\delta^{\beta}(\sqrt{e^{C\delta}e(\bar u)}+\sqrt{P})\le \frac{\epsilon}4,
\end{equation*}
for any $0<\epsilon<1$.
In terms of (\ref{eq:inductive inequality}), we get inductively
\begin{equation*}
\begin{split}
C_1\Lambda\delta^{\beta}(p_k(\delta)+\sqrt P)&\le\Big(C_1\Lambda\delta^{\beta}(p_{k-1}(\delta)+\sqrt P)\Big)^2+C_1\Lambda\delta^{\beta}(p_0(\delta)+\sqrt P) \\
&\le\frac{\epsilon}4+\frac{\epsilon}4 =\frac{\epsilon}2,
\end{split}
\end{equation*}
and thus
\begin{equation}\label{eq:energy estimate variant}
C_1\Lambda\delta^{\beta}p_k(\delta)\le C_1\Lambda\delta^{\beta}(p_k(\delta)+\sqrt P)\le\frac{\epsilon}2.
\end{equation}
Consequently
\begin{equation}\label{eq:inductive energy estimate}
p_k(\delta)\le C_2\epsilon\delta^{-\beta}.
\end{equation}
Let us introduce the following space of functions
\begin{equation*}
C^1_H(M,\mathbb{R}^K)=\lbrace f:M\to \mathbb{R}^K\vert f\in C^0,\,\nabla^H f\in C^0\rbrace,
\end{equation*}
which is equipped with the norm
\begin{equation*}
\Vert f\Vert_{C^1_H}=\Vert f\Vert_{C^0}+\Vert\nabla^H f\Vert_{C^0}.
\end{equation*}
It is a fact that $(C^1_H(M,\mathbb{R}^K),\Vert\cdot\Vert_{C^1_H})$ is a Banach space. Combining (\ref{eq:distance estimate}) with (\ref{eq:inductive energy estimate}), one gets
\begin{equation*}
\Vert u_k\Vert_{C^1_H(M,R^K)}\le C_3(C_2,\epsilon,\delta).
\end{equation*}
In terms of (\ref{eq:distance estimate}) on $M\times[0,\delta)$ and using (\ref{eq:energy estimate variant}), one has
\begin{equation}\label{eq:sufficiently small}
\vert u_k(x,t)-u_0(x,t)\vert\le\Lambda\delta (p^2_{k-1}+P)(\delta)\le\Lambda\delta (p_{k-1}+\sqrt P)^2(\delta)\le\frac{\epsilon^2\delta^{1-2\beta}}{4C_1\Lambda}.
\end{equation}
If we choose a sufficiently small $\delta$, then the inequality (\ref{eq:energy estimate variant}) is valid. We notice that $1-2\beta> 0$ (see Lemma \ref{thm:subelliptic heat kernel gradient}). Hence (\ref{eq:sufficiently small}) suggests that all maps $u_k\,(k\ge 1)$ will map $M$ into $B(N)$ by choosing $\delta$ sufficiently small since $\vert u_0(x,t)-\bar u(x)\vert$ can be chosen to be sufficiently small for small $t$, owing to continuity of $u_0$.

The proof of the theorem will be complete, if we show that $\lbrace u_k(x,t)\rbrace$ is a Cauchy sequence in $C^1_H(M,\mathbb{R}^K)$, in the case that $t$ is sufficiently small. To this end, we investigate the following sequence
\begin{equation*}
X_k(t)=\underset{M\times[0,t)}{\text{sup}}\lbrace\vert u_k(x,s)-u_{k-1}(x,s)\vert+\vert\nabla^H_x u_k(x,s)-\nabla^H_x u_{k-1}(x,s)\vert\rbrace
\end{equation*}
which is non-decreasing in $t$. Note that
\begin{equation*}
\begin{split}
F_k(x,t)-F_{k-1}(x,t)
&=\Big(\Pi^a_{bc}(u_k)-\Pi^a_{bc}(u_{k-1})\Big)\langle\nabla^H u^b_k,\nabla^H u^c_k\rangle \\
&\quad+\Pi^a_{bc}(u_{k-1})\langle\nabla^H u^b_k-\nabla^H u^b_{k-1},\nabla^H u^c_k\rangle \\
&\quad+\Pi^a_{bc}(u_{k-1})\langle\nabla^H u^b_{k-1},\nabla^H u^c_k-\nabla^H u^c_{k-1}\rangle \\
&\quad+\Pi^a_b\Big(D\,\bar G^b(u_k)-D\,\bar G^b(u_{k-1})\Big).
\end{split}
\end{equation*}
Using (\ref{eq:inductive energy estimate}) and the following estimates
\begin{equation*}
\begin{split}
&\vert\Pi^a_{bc}(u_k)-\Pi^a_{bc}(u_{k-1})\vert\le\Lambda\vert u_k-u_{k-1}\vert, \\
&\vert D\,\bar G(u_k)-D\,\bar G(u_{k-1})\vert\le P\vert u_k-u_{k-1}\vert,
\end{split}
\end{equation*}
we find that
\begin{equation*}
\begin{split}
\underset{M\times[0,t]}{\text{sup}}\vert F_k(x,t)-F_{k-1}(x,t)\vert&\le C_4 X_k(t)(p^2_k(t)+p_k(t)+p_{k-1}(t)) \\
&\le C_5 X_k(t)
\end{split}
\end{equation*}
for any $t\le\delta$. As a result, we get
\begin{equation*}
\begin{split}
\vert u_k-u_{k-1}\vert&\le\int_0^t\int_M K(x,y,t-s)\vert F_{k-1}(y,s)-F_{k-2}(y,s)\vert \,\mathrm{d}v_g(s)\,\mathrm{d}s \\
&\le C_5 tX_{k-1}(t)
\end{split}
\end{equation*}
and
\begin{equation*}
\begin{split}
&\vert \nabla^H_x u_k-\nabla ^H_x u_{k-1}\vert \\
&\le\int_0^t\int_M \vert\nabla^H_x K(x,y,t-s)\vert\cdot\vert F_{k-1}(y,s)-F_{k-2}(y,s)\vert \,\mathrm{d}v_g(s)\,\mathrm{d}s \\
&\le C_6 t^{\beta}X_{k-1}(t),
\end{split}
\end{equation*}
which yield
\begin{equation}\label{eq:inductive norm inequality}
X_k(t)\le C_7t^{\beta}X_{k-1}(t)
\end{equation}
for $k\ge 2$. For $k=1$, using $t<1$, we obtain from (\ref{eq:inductive solution}) and (\ref{eq:initial energy estimate}) that
\begin{equation*}
\begin{split}
\vert u_1(x,t)-u_0(x,t)\vert&\le\int_0^t\int_M K(x,y,t-s)\vert F_0(y,s)\vert \,\mathrm{d}v_g(y)\,\mathrm{d}s \\
&\le t\Lambda p^2_0(t) \le t\Lambda (e^Ce(\bar u)+P)
\end{split}
\end{equation*}
and
\begin{equation*}
\begin{split}
\vert\nabla^H_x u_1(x,t)-\nabla^H_x u_0(x,t)\vert&\le\int_0^t\int_M \vert\nabla^H_x  K(x,y,t-s)\vert\cdot\vert F_0(y,s)\vert \,\mathrm{d}v_g(y)\,\mathrm{d}s \\
&\le C_1t^{\beta}\Lambda p^2_0(t) \le C_1t^{\beta}\Lambda (e^Ce(\bar u)+P).
\end{split}
\end{equation*}
It follows that
\begin{equation}\label{eq:initial norm estimate}
X_1(t)\le C_8(C_7t^{\beta})(e(\bar u)+P).
\end{equation}
By iterating (\ref{eq:inductive norm inequality}) and using (\ref{eq:initial norm estimate}), we have
\begin{equation}\label{eq:norm estimate}
X_k(t)\le C_8(C_7t^{\beta})^k (e(\bar u)+P).
\end{equation}
Choosing $\delta_0 $ sufficiently small such that $0 <\delta_0 \le\delta$ and $C_7\delta^{\beta}<1$, then (\ref{eq:norm estimate}) yields that for any $i\le j$
\begin{equation*}
\underset{[0,\delta_0]}{\text{sup}}\Vert u_i(\cdot,t)-u_j(\cdot,t)\Vert_{C^1_H(M)}
\le\sum\limits_{k=i+1}^{j} X_k(\delta_0)\le C_9 \sum\limits_{k=i+1}^{j} (C_7\delta^{\beta})^k,
\end{equation*}
which vanishes as $i,j\to\infty$. Therefore there exists $u\in C^0(M\times[0,\delta_0],B(N))$ with $u(\cdot,t)\in C^1_H(M,B(N))$ for each $t\in [0,\delta_0]$, such that $u_k\to u$ and $\nabla^H u_k\to\nabla^H u$ uniformly on $M\times[0,\delta_0]$. Thus
\begin{equation*}
F_k(x,t)\to F(x,t)=\Pi^a_{bc}(u)\langle\nabla^H u^b,\nabla^H u^c\rangle-\Pi^a_b(D\,\bar G^b(u))
\end{equation*}
and hence (\ref{eq:inductive solution}) implies that $u$ is given by
\begin{equation*}
u(x,t)=\int_M K(x,y,t)\varphi(y)dv_g(y)-\int^t_0\int_M K(x,y,t-s)F(y,s)\,\mathrm{d}v_g(y)\,\mathrm{d}s.
\end{equation*}
Clearly $u$ solves the subelliptic harmonic map heat flow with potential weakly. In view of Theorem \ref{thm:subelliptic regularity theory} and by a bootstrapping argument, we find that $u\in C^{\infty}(M\times(0,\delta_0),N)$ solves (\ref{eq:heat flow u}).

Next we will prove the solution of (\ref{eq:heat flow u}) is unique. Let $u$ and $v$ be solutions on $M\times[0,\delta)$ to (\ref{eq:tubular heat flow}) with the same initial condition: $u(x,0)=v(x,0)=\bar u$. Set $\Psi=\sum\nolimits^K_{a=1}(u^a-v^a)^2$, a similar computation as in the proof of \cite[Theorem 6.2]{MR4236537} shows
\begin{equation*}
(\Delta_H-\frac{\partial}{\partial t})(e^{-\widetilde Ct}\Psi)\ge 0,
\end{equation*}
where $\widetilde C(\delta_0,\Lambda,P,e_H(u),e_H(v))$ is a positive constant.
Then the uniqueness follows immediately from Lemma \ref{thm:subelliptic maximum principle}.
\end{proof}

\begin{remark}\label{thm:short time for complete}
When N is complete but not necessarily compact, there exists an open neighborhood $N'$ of $\bar u(M)$ with compact closure so that $N'$ can be embedded into $\mathbb{R}^K$ isometrically, since $\bar u$ is smooth and $M$ is compact. If necessary, by choosing a smaller neighborhood, we may assume that there exists a bounded tubular neighborhood $\widetilde N$ of $N'$ in $\mathbb{R}^K$  and the nearest point projection $\Pi : \widetilde N \to N$ can be extended smoothly to the whole $\mathbb{R}^K$ so that each $\Pi^a$
is compactly supported. Also, $\bar G\vert_{\widetilde N}$, which is the restriction of
$\bar G$ to $\widetilde N$, can be extended smoothly to a smooth function with compact support on $\mathbb{R}^K$, which we still denote by $\bar G$ for simplicity. Set
\begin{equation*}
\begin{split}
\Lambda &=\underset{\mathbb{R}^K,a,b,c,d}{\text{sup}}\lbrace{\vert\Pi^a_{b}\vert,\vert\Pi^a_{bc}\vert,\vert\frac{\partial\Pi^a_{bc}}{\partial y^d}\vert}\rbrace \\
P&=\underset{\mathbb{R}^K,I,J}{\text{sup}}\lbrace\vert D\,\bar G\vert, \vert \bar G_{IJ}\vert\rbrace,
\end{split}
\end{equation*}
then $\Lambda$ and $P$ are bounded.
Constructing a sequence of approximate solutions $u_k$ as we do in the proof of Proposition \ref{thm:short time existence}, we can get
\begin{equation*}
\vert u_k(x,t)-u_0(x,t)\vert\le\frac{\epsilon^2\delta^{1-2\beta}}{4C_1\Lambda}
\end{equation*}
which implies that all maps $u_k(k\ge 1)$ map $M$ into $\widetilde N$ for $t\in[0,\delta)$ by choosing $\delta$ sufficiently small. By showing that $\lbrace u_k(x,t)\rbrace$ is a Cauchy sequence in $C^1_H(M,\mathbb{R}^K)$, we can establish the short-time existence result of (\ref{eq:heat flow u}) in the case that $N$ is complete. (cf. \cite{MR1109619} for a similar discussion for harmonic map heat flows.)
\end{remark}

\section{long time existence}\label{sec:long}
In this section, we will apply a standard method to prove the long time existence.

\begin{proof}[Proof of Theorem \ref{thm:long time existence for complete}]
First, we assume that $N$ is a complete manifold and $\text{Hess}\, G\le C$. The short time existence of solution of (\ref{eq:heat flow u}) is a direct consequence of Proposition \ref{thm:short time existence} and Remark \ref{thm:short time for complete}. Let $u(x,t)$ be the solution, and let $[0,T)$ be its maximal existence domain. Suppose $T<+\infty$, we want to show:
\begin{equation}\label{eq:short time estimates}
\begin{split}
&(i)\qquad \vert \frac{\partial u(t)}{\partial t}\vert \le C(T),\\
&(ii)\qquad \vert \mathrm{d}u(t)\vert \le C(T),\\
&(iii)\quad d_{N}(u(t),\bar u) \le C(T),
\end{split}
\end{equation}
for some finite number $C(T)$ on $[0,T)$, where $d_N$ is the Riemannian distance function on $(N,h)$.

From \cite[Section 4]{MR4236537}, we know
\begin{equation*}
u_{At}=u_{tA}
\end{equation*}
and
\begin{equation*}
u_{AtB}-u_{ABt}=-u^K_A\widetilde R^I_{KJL}f^J_tf^L_B.
\end{equation*}
To prove (\ref{eq:short time estimates}), a simple computation gives
\begin{equation}\label{eq:time derivative estimate}
\begin{split}
(\frac{\partial}{\partial t}-\Delta_H)\vert \frac{\partial u(t)}{\partial t}\vert^2&=-2\vert\nabla^{H} \frac{\partial u(t)}{\partial t}\vert^2-2(\frac{\partial u(t)}{\partial t})_{kk}\frac{\partial u(t)}{\partial t}-2\zeta^k(\frac{\partial u(t)}{\partial t})_{k}\frac{\partial u(t)}{\partial t}\\
& +2\frac{\partial u(t)}{\partial t}\frac{\partial^2 u(t)}{\partial t^2} \\
&=-2\vert\nabla^{H} \frac{\partial u(t)}{\partial t}\vert^2+2\frac{\partial u(t)}{\partial t}\frac{\partial \tau_H(u)}{\partial t}+2\frac{\partial u(t)}{\partial t}\frac{\partial^2 u(t)}{\partial t^2} \\
& +2\langle\text{Riem}_{N}(du(t)(e_i),\frac{\partial u(t)}{\partial t})du(t)(e_i),\frac{\partial u(t)}{\partial t}\rangle\\
&=-2\vert\nabla^{H} \frac{\partial u(t)}{\partial t}\vert^2+2\text{Hess}\,G(\frac{\partial u(t)}{\partial t},\frac{\partial u(t)}{\partial t})\\
&+2\langle\text{Riem}_{N}(du(t)(e_i),\frac{\partial u(t)}{\partial t})du(t)(e_i),\frac{\partial u(t)}{\partial t}\rangle.
\end{split}
\end{equation}
Since sectional curvature $\text{Riem}_{N}\le 0$ and $\text{Hess}\,G\le C\cdot h$, then for some constant $C_1> 0$, we have
\begin{equation}\label{eq:time derivative subsolution}
(\frac{\partial}{\partial t}-\Delta_H)\vert \frac{\partial u(t)}{\partial t}\vert^2\le C_1 \vert \frac{\partial u(t)}{\partial t}\vert^2.
\end{equation}
From Lemma \ref{thm:subelliptic maximum principle}, we derive that
\begin{equation}\label{eq:first short time estimate}
\underset {x\in M}{\text{sup}}\,\vert \frac{\partial u}{\partial t}\vert^2(x,t)\le \underset {x\in M}{\text{sup}}\,e^{C_1 t} \vert \frac{\partial u}{\partial t}\vert^2(x,0)\le \underset {x\in M}{\text{sup}}\,e^{C_1 T} \vert \frac{\partial u}{\partial t}\vert^2(x,0),
\end{equation}
which proves (\ref{eq:short time estimates})(i).

The estimate (\ref{eq:short time estimates})(iii) follows from (\ref{eq:first short time estimate}), since
\begin{equation*}
d_{N}(u(t),\bar u)\le \int_0^t\vert\frac{\partial u(s)}{\partial s}\vert \,\mathrm{d}s\le \underset {x\in M}{\text{sup}}\, \vert \frac{\partial u(t)}{\partial t}\vert \int_{0}^{t} e^{\frac{C_1}{2} t} \,\mathrm{d}s\le \frac{2}{C_1} e^{\frac{C_1}{2} T} \underset {x\in M}{\text{sup}}\, \vert \frac{\partial u(0)}{\partial t}\vert.
\end{equation*}
Finally, using Lemma \ref{thm:subelliptic bochner formula}, we may deduce
\begin{equation*}
(\frac{\partial}{\partial t}-\Delta_H)e(u(t))\le C_{\epsilon}e(u(t))+\text{Hess}\,G(u_i(t),u_i(t))+\text{Hess}\,G(u_{\alpha}(t),u_{\alpha}(t)).
\end{equation*}
Since $\text{Hess}\,G\le C\cdot h$, we see that
\begin{equation}\label{eq:second short time estimate}
(\frac{\partial}{\partial t}-\Delta_H)e(u(t))\le C_2e(u(t)),
\end{equation}
for some constant $C_2>0$.
By Lemma \ref{thm:subelliptic maximum principle}, we get
\begin{equation*}
\underset {x\in M}{\text{sup}}\,e(u)(x,t)\le \underset {x\in M}{\text{sup}}\,e^{C_1 t} e(u)(x,0)\le \underset {x\in M}{\text{sup}}\,e^{C_1 T} e(u)(x,0).
\end{equation*}
The proof of (\ref{eq:short time estimates}) is achieved.
Since $\bar u(M)\subset N$ is compact and $(\ref{eq:short time estimates})(iii)$, we find that, for any $t \in [0,T)$, $u(t)(M) \subset N'$, where $N' \subset N$ is compact. Consider a sequence $T_i \to T$. Taking $u(\cdot,T_i)$ as a initial map, we may find a solution $u$ of (\ref{eq:euclidean heat flow}) on $[0,T+\delta')$ for some positive number $0<\delta'<\delta$ by Proposition \ref{thm:short time existence} if $i$ is sufficiently large.

From above discussion, it is easy to see that if $N$ is compact, then we have the long time existence for any $G\in C^{\infty}(N)$.
\end{proof}

\begin{remark}
If we take the potential function $G \equiv 0$, we get the following corollary
immediately.
\end{remark}

\begin{corollary}\label{thm:without potential}
Let $(M,H,g_H;g)$ be a compact sub-Riemannian manifold and let $(N,h)$ be a complete Riemannian manifold with non-positive sectional curvature. Then for any smooth map $\bar u : M \to N$, the following heat flow
\begin{equation}\label{eq:heat flow without potential}
\left\{\begin{gathered}
\frac{\partial u}{\partial t}=\tau_H(u)\\
\left.u\right|_{t=0}=\bar u
\end{gathered}\right.
\end{equation}
admits a unique
smooth solution defined on $M \times [0,+\infty)$.
\end{corollary}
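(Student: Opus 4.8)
The plan is to obtain this corollary as the immediate specialization of Theorem \ref{thm:long time existence for complete} to the case $G \equiv 0$. First I would observe that when $G$ vanishes identically, its Riemannian gradient $\widetilde\nabla G$ vanishes as well, so the full tension field $\tau(u) = \tau_H(u) + (\widetilde\nabla G)(u)$ reduces to the horizontal tension field $\tau_H(u)$. Consequently the heat flow (\ref{eq:heat flow u}) becomes exactly the potential-free flow (\ref{eq:heat flow without potential}), and any solution of one is a solution of the other with the same initial data $\bar u$.

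Next I would check that the long-time hypothesis of Theorem \ref{thm:long time existence for complete} is satisfied for free. Since $G \equiv 0$ we have $\text{Hess}\,G = 0$, so the bound $\text{Hess}\,G \le C \cdot h$ holds for every $C > 0$. The theorem therefore guarantees that the maximal existence time is $T = +\infty$. The short-time existence and uniqueness on $M \times [0,T)$ are likewise supplied directly by the theorem, which in turn rests on Proposition \ref{thm:short time existence} together with the complete-target modification described in Remark \ref{thm:short time for complete}.

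There is essentially no genuine obstacle here, since all the analytic work---the short-time construction via Duhamel's principle, the Bochner estimate of Lemma \ref{thm:subelliptic bochner formula}, and the maximum-principle bounds of Lemma \ref{thm:subelliptic maximum principle}---has already been carried out in the proof of Theorem \ref{thm:long time existence for complete}. The only point worth verifying carefully is that the curvature and completeness hypotheses imposed on $(N,h)$ in the corollary coincide exactly with those of the theorem, which they do. Hence the argument reduces to a single invocation: apply Theorem \ref{thm:long time existence for complete} with $G \equiv 0$ and read off both the short-time assertion and the conclusion $T = +\infty$.
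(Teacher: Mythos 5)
Your proposal is correct and matches the paper's own argument exactly: the paper derives this corollary as an immediate consequence of Theorem \ref{thm:long time existence for complete} by setting $G \equiv 0$, just as you do. Your additional observation that $\text{Hess}\,G = 0$ satisfies the hypothesis $\text{Hess}\,G \le C \cdot h$ for every $C > 0$ is precisely the point that makes the specialization work.
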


\section{eells-sampson type theorem}\label{sec:eells}

In this section, we will establish the Eells-Sampson type theorem for the
subelliptic harmonic map heat flow with potential $G$. The only obstacle is to prove the convergence of $u$ at infinity. In order to prove that, we first need to show that $e(u)$ is uniformly bounded with respect to $t\in[0,\infty)$. The next lemma tells us that it is sufficient to estimate the upper bound of $E(u)$.

\begin{lemma}\label{thm:energy density}
Let $f:M\to N$ be a solution of the subelliptic harmonic map heat flow with potential $G$ on $[0,\delta)$. Suppose $(N,h)$ has non-positive sectional curvature and $\text{Hess}\,G\le C\cdot h$. Set $\alpha=min\lbrace R_0,\sqrt\delta\rbrace$, where $R_0$ is given by Lemma \ref{thm:subelliptic maximum principle}. Then
\begin{equation*}
e(f(\cdot,t))\le C(\epsilon_0)E(f(\cdot,t-\epsilon_0))
\end{equation*}
for $t\in[\epsilon_0,\delta)$, where $\epsilon_0$ is a fixed number in $(0,\frac{\alpha^2}2)$.
\end{lemma}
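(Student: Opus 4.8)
The plan is to upgrade the Bochner-type inequality of Lemma~\ref{thm:subelliptic bochner formula} into a parabolic differential inequality for the energy density $e(f)$, to recognize $e(f)$ (after an exponential rescaling in $t$) as a nonnegative subsolution of the subelliptic heat equation, and then to feed this into the mean value inequality of Lemma~\ref{thm:subelliptic maximum principle} with the time origin shifted to the slice $\{t-\epsilon_0\}$.

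First I would compute $\partial_t e(f)$ along the flow. Since $f$ solves $\partial_t f=\tau(f)$, the commutation identity $u_{At}=u_{tA}$ recorded in Section~\ref{sec:long} gives $\partial_t f^I_A=f^I_{tA}=\tau^I_{,A}$, so that $\partial_t e(f)=f^I_i\tau^I_{,i}+f^I_\alpha\tau^I_{,\alpha}$. Subtracting $\Delta_H e(f)$ and inserting (\ref{eq:bochner formula}), the curvature term is already absorbed by the non-positivity of the sectional curvature of $N$, while the bound $\mathrm{Hess}\,G\le C\cdot h$ yields $\mathrm{Hess}\,G(f_i,f_i)+\mathrm{Hess}\,G(f_\alpha,f_\alpha)\le 2C\,e(f)$. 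This produces exactly the inequality $(\partial_t-\Delta_H)e(f)\le C'e(f)$ of (\ref{eq:second short time estimate}) for a constant $C'>0$ depending only on the geometry of $M$, the curvature of $N$, and $C$. Consequently $e^{-C't}e(f)\ge 0$ is a nonnegative subsolution of the subelliptic heat equation on $M\times[0,\delta)$.

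Next, fixing $t\in[\epsilon_0,\delta)$, I would exploit the autonomy of the flow: the shifted map $f(\cdot,\cdot+t-\epsilon_0)$ is again a solution on $[0,\delta-(t-\epsilon_0))$, so $g(x,s):=e(f(x,s+t-\epsilon_0))$ obeys the same inequality and $\Phi(x,s):=e^{-C's}g(x,s)$ is a nonnegative subsolution with initial slice $\Phi(\cdot,0)=e(f(\cdot,t-\epsilon_0))$. Because $\epsilon_0<\tfrac{\alpha^2}{2}\le\tfrac12\min\{R_0^2,\delta\}$, the value $s=\epsilon_0$ lies in the admissible range $(0,R_0^2)$ and strictly inside the existence interval of the shifted flow, so the quantitative part of Lemma~\ref{thm:subelliptic maximum principle} applies at time $\epsilon_0$ and gives $\sup_M\Phi(\cdot,\epsilon_0)\le B\epsilon_0^{-Q/2}\int_M\Phi(\cdot,0)\,dv_g=B\epsilon_0^{-Q/2}E(f(\cdot,t-\epsilon_0))$. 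Unraveling $\Phi(\cdot,\epsilon_0)=e^{-C'\epsilon_0}e(f(\cdot,t))$ yields $e(f(\cdot,t))\le Be^{C'\epsilon_0}\epsilon_0^{-Q/2}E(f(\cdot,t-\epsilon_0))$, which is the assertion with $C(\epsilon_0)=Be^{C'\epsilon_0}\epsilon_0^{-Q/2}$.

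Finally, I would stress where the real content sits. The differential inequality of the first step is essentially already available from the long-time existence argument, so it is not the obstacle; the decisive points are structural. The time-shift uses that the heat flow is autonomous, which is precisely what makes the rescaling constant $e^{C'\epsilon_0}$ depend on $\epsilon_0$ alone and not on $t$, since the $t$-dependent exponential factors cancel. The constraint $\epsilon_0<\alpha^2/2$ is exactly what I must respect to keep $s=\epsilon_0$ below $R_0^2$, so that the mean value inequality of Lemma~\ref{thm:subelliptic maximum principle}---the genuinely subelliptic ingredient, whose exponent $Q$ reflects the step of the bracket-generating distribution---can be invoked; checking that this single application covers the whole range $t\in[\epsilon_0,\delta)$ with one $t$-independent constant is the main thing to verify.
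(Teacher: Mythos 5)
Your proposal is correct and takes essentially the same route the paper intends: the paper omits the details, pointing to (\ref{eq:second short time estimate}) and the argument of \cite[Lemma 6.4]{MR4236537}, which is precisely your scheme of deriving $(\partial_t-\Delta_H)e(f)\le C'e(f)$ from Lemma \ref{thm:subelliptic bochner formula}, exponentially rescaling, shifting the time origin to the slice $t-\epsilon_0$, and invoking the mean value inequality of Lemma \ref{thm:subelliptic maximum principle} at time $\epsilon_0<\tfrac{1}{2}\min\{R_0^2,\delta\}$. Your checks that the shifted flow still exists past $s=\epsilon_0$ and that the resulting constant $Be^{C'\epsilon_0}\epsilon_0^{-Q/2}$ is independent of $t$ are exactly the points that make the omitted argument go through.
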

Using (\ref{eq:second short time estimate}), the proof of Lemma \ref{thm:energy density} is similar to \cite[Lemma 6.4]{MR4236537}, so we omit it.

From (\ref{eq:first derivative f}), it follows that
\begin{equation}\label{eq:first derivative u}
\frac{\mathrm{d}}{\mathrm{d}t}E_G(u(\cdot,t))=-\int_M\vert \tau(u(\cdot,t)) \vert^2 \mathrm{d}v_g \le 0,
\end{equation}
which implies $E_G(u(\cdot,t))\le E_G(\bar u)$, where $u(\cdot,t)$ is a solution of (\ref{eq:heat flow u}). When $\text{Im} u \subset N'$, where $N'$ is a compact subset of $N$, we have $\vert E_P(u(\cdot,t))\vert<(\text{max}_{N'}\vert G \vert)\text{vol}(M)$. Therefore it is enough to estimate $E_V(u(\cdot,t))$ for our purpose in the case that $\text{Im} u$ has compact closure $N'$.

Let: $S(V)\to M$ be the unit sphere bundle of the vertical bundle $V$. For any $v\in S(V)$, the $v$-component of $T(\cdot,\cdot)$ is given by $T^v(\cdot,\cdot)=\langle T(\cdot,\cdot),v\rangle$. Then we have a smooth function $\eta(v)=\frac 1 2\Vert T^v\Vert^2_g:S(V)\to \mathbb{R}$, given by
\begin{equation}\label{eq:eta}
\begin{split}
\eta(v)&=\sum\limits_{1\le i\le j\le m}(T^{\alpha}_{ij})^2\langle e_{\alpha},v\rangle^2 \\
&=\sum\limits_{1\le i\le j\le m}\langle [e_i,e_j],v\rangle^2.
\end{split}
\end{equation}

\begin{lemma}(\cite[Lemma 6.6]{MR4236537})\label{thm:eta}
H is $2$-step bracket generating if and only if $\eta(v)>0$ for each $v\in S(V)$.
\end{lemma}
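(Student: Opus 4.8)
The plan is to reduce the equivalence to a pointwise linear-algebra statement at each $x \in M$, using the explicit formula for $\eta$ recorded in (\ref{eq:eta}). By definition, $H$ is $2$-step bracket generating exactly when $H^{(2)}_x = T_xM$ for every $x$, i.e. the values at $x$ of horizontal vector fields together with their pairwise Lie brackets span $T_xM$. Since $T_xM = H_x \oplus V_x$ and $H_x \subset H^{(2)}_x$ holds automatically, I would first observe that this is equivalent to requiring $\pi_V(H^{(2)}_x) = V_x$, i.e. that the vertical projections $\pi_V([e_i,e_j]_x)$ span $V_x$.

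Next I would record the reduction from arbitrary brackets to frame brackets. For $X = \sum_i f^i e_i$ and $Y = \sum_j g^j e_j$ in $\Gamma(H)$, the expansion $[X,Y] = f^i(e_i g^j)e_j - g^j(e_j f^i)e_i + f^i g^j [e_i,e_j]$ shows that the first two sums lie in $H$, hence are orthogonal to any $v \in V$. Consequently $\langle [X,Y]_x, v\rangle = f^i(x)g^j(x)\langle [e_i,e_j]_x, v\rangle$, so a vertical vector $v$ is orthogonal to all brackets $[X,Y]_x$ precisely when it is orthogonal to all $[e_i,e_j]_x$, and since $v \in V$ the latter is the same as orthogonality to all $\pi_V([e_i,e_j]_x)$. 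By (\ref{eq:eta}), $\eta(v) = \sum_{1 \le i \le j \le m}\langle [e_i,e_j],v\rangle^2$, so $\eta(v) = 0$ for a unit $v \in V_x$ is exactly the assertion that this nonzero $v$ is orthogonal to every $\pi_V([e_i,e_j]_x)$.

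With these observations the two implications are immediate. For the forward direction, assuming $2$-step generation, if $\eta(v) = 0$ for some unit $v \in V_x$ then $v \perp [e_i,e_j]_x$ for all $i,j$ and $v \perp H_x$, whence $v \perp H^{(2)}_x = T_xM$, forcing $v = 0$, a contradiction; thus $\eta > 0$ on $S(V)$. Conversely, if $\eta > 0$ everywhere but $H^{(2)}_x \ne T_xM$ at some $x$, then $\pi_V(H^{(2)}_x)$ is a proper subspace of $V_x$, so some unit $v \in V_x$ is orthogonal to all $\pi_V([e_i,e_j]_x)$, giving $\eta(v) = 0$ and a contradiction.

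The argument is soft, so I do not expect a genuine analytic obstacle; the only point requiring care is the bookkeeping tying $H^{(2)}_x = T_xM$ to the vertical projections of the frame brackets---namely verifying the equivalence $H^{(2)}_x = T_xM \Leftrightarrow \pi_V(H^{(2)}_x) = V_x$ once $H_x \subset H^{(2)}_x$ is invoked, and confirming that the tensorial quantity $\eta$ from (\ref{eq:eta}) is independent of the adapted frame so that the condition ``$\eta(v) > 0$ for each $v \in S(V)$'' is well posed.
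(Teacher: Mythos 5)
Your proof is correct. Note that the paper itself gives no argument for this lemma---it is quoted verbatim from \cite[Lemma 6.6]{MR4236537}---and your pointwise reduction (frame brackets span the same vertical subspace as arbitrary horizontal brackets, then linear algebra on $\pi_V\bigl(H^{(2)}_x\bigr)$ versus $V_x$) is exactly the standard argument behind that cited result. The one caveat you flag, frame-independence of $\eta$, is automatic rather than something to verify: the paper defines $\eta(v)=\frac12\Vert T^v(\cdot,\cdot)\Vert^2_g$ through the torsion tensor of the generalized Bott connection, so it is tensorial by construction, and (\ref{eq:eta}) is merely its expression in an adapted frame (using that $T$ vanishes on mixed horizontal--vertical arguments and that $\langle T(X,Y),v\rangle=-\langle [X,Y],v\rangle$ for horizontal $X,Y$ and vertical $v$).
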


\begin{lemma}\label{thm:step-2}
Let $(M,H,g_H;g)$ be a compact step-$2$ sub-Riemannian manifold and set $\eta_{min}=min_{v\in S(V)}\eta(v)$. Let $(N,h)$ be a compact Riemannian manifold with non-positive sectional curvature. Suppose $u:M\times[0,\delta)\to N$ is a solution of the subelliptic harmonic map heat flow with potential $G$ and $\text{Hess}\,G<\frac{\eta_{min}}{2}\cdot h$. Let $\epsilon$ be a fixed number with $0<\epsilon<\frac{1}{2}(\eta_{min}-2\lambda_G)$ in Lemma \ref{thm:subelliptic bochner formula}, where $\lambda_G$ is the minimum number such that $\text{Hess}\,G\le\lambda_G\cdot h$. Then, for any given $t_0\in(0,\delta)$, we have
\begin{equation*}
\begin{split}
E_V(u(\cdot,t))&\le E_V(u(\cdot,t_0))+\frac 2 {\eta_{min}-2\lambda_G}\Big(2\int_M\vert\tau(u(\cdot,t_0))\vert^2 \mathrm{d}v_g \\
&\quad+\int_M\vert\nabla G\vert^2 \mathrm{d}v_g+C(E_G(u(\cdot,t_0))+(\text{max}_{N}\vert G \vert)\text{vol}(M)\Big).
\end{split}
\end{equation*}
for any $t\in(t_0,\delta)$, where $C(\eta_{min},G)$ is a positive constant.
\end{lemma}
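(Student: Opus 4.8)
The plan is to isolate a differential inequality for the vertical energy $E_V(u(\cdot,t))$ and integrate it. This is forced on us because the flow \eqref{eq:heat flow u} only sees the horizontal tension field, so the total energy $E=E_H+E_V$ is not manifestly controlled and $E_V$ must be estimated on its own. The geometric heart of the argument is a pointwise estimate \emph{special to the step-$2$ case}: the squared horizontal Hessian $(f^I_{ik})^2$ dominates the vertical energy density $e_V(u)$, with the bracket-generating constant $\eta_{min}>0$ (positive by Lemma \ref{thm:eta}) as the quantitative factor. This is precisely the term that occurs with a favorable sign in the Bochner inequality \eqref{eq:bochner formula}, and the hypothesis $\text{Hess}\,G<\frac{\eta_{min}}{2}h$ is exactly what makes the resulting coefficient $\eta_{min}-2\lambda_G$ positive.

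First I would establish the pointwise bracket estimate. Since $\widetilde\nabla$ is torsion-free, the pulled-back Hessian is symmetric up to the bracket, i.e. $\widetilde\nabla^f_{e_i}\mathrm{d}u(e_j)-\widetilde\nabla^f_{e_j}\mathrm{d}u(e_i)=\mathrm{d}u([e_i,e_j])$. Comparing with the definition of $\beta$ and using that on horizontal fields $\nabla^{\mathfrak B}_{e_i}e_j-\nabla^{\mathfrak B}_{e_j}e_i=\pi_H([e_i,e_j])$, the horizontal part cancels and one is left with the exact identity $f^I_{ji}-f^I_{ij}=\langle[e_i,e_j],e_\alpha\rangle f^I_\alpha$; that is, the antisymmetric part of the horizontal Hessian equals the vertical differential paired against the brackets. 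Fixing $I$, viewing $\sum_\alpha f^I_\alpha e_\alpha$ as a vector in $V$, summing squares over $i<j$, and invoking the definition of $\eta$ in \eqref{eq:eta} gives $\sum_{i<j}(f^I_{ji}-f^I_{ij})^2\ge\eta_{min}\sum_\alpha(f^I_\alpha)^2$; summing over $I$ and comparing with the full horizontal Hessian yields the pointwise bound $(f^I_{ik})^2\ge\eta_{min}\,e_V(u)$.

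Next I would integrate the Bochner inequality \eqref{eq:bochner formula} over $M$. Along the flow $\tau=\partial u/\partial t$, so $\int_M(f^I_i\tau^I_{,i}+f^I_\alpha\tau^I_{,\alpha})\,\mathrm{d}v_g=\frac{d}{dt}E$ while $\int_M\Delta_H e(u)\,\mathrm{d}v_g=0$. Using $\text{Riem}_N\le0$, the bounds $\text{Hess}\,G(u_i,u_i)\le2\lambda_G e_H$ and $\text{Hess}\,G(u_\alpha,u_\alpha)\le2\lambda_G e_V$, discarding $\tfrac12(f^I_{\alpha k})^2\ge0$, and inserting the pointwise estimate with $\epsilon<\tfrac12(\eta_{min}-2\lambda_G)$, I obtain
\begin{equation*}
\frac{d}{dt}E\le -c\,E_V+C_1\,E_H,\qquad c:=\tfrac12(\eta_{min}-2\lambda_G)>0,
\end{equation*}
with $C_1=C_1(\eta_{min},G)$. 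The crucial auxiliary input is a uniform bound on $E_H$: since $\frac{d}{dt}E_G=-\int_M|\tau|^2\,\mathrm{d}v_g\le0$ and $E_H=E_G+\int_M G(u)$, one has $E_H(\cdot,t)\le E_G(u(\cdot,t_0))+(\max_N|G|)\text{vol}(M)$ for all $t\ge t_0$. Writing $E_V=E-E_H$ and passing to $\frac{d}{dt}E_V+c\,E_V\le C_1 E_H-\frac{d}{dt}E_H$, I would apply the integrating factor $e^{cs}$ and integrate over $[t_0,t]$. Because $e^{-c(t-s)}\le1$, the resulting forcing is controlled entirely by data at $t_0$: the $E_H$-contribution by the uniform bound above, the term $-\frac{d}{dt}E_H=\int_M|\tau|^2-\int_M\langle\widetilde\nabla G,\tau\rangle$ by the time-integrable quantity $\int_{t_0}^t\int_M|\tau|^2\le E_G(u(\cdot,t_0))+(\max_N|G|)\text{vol}(M)$ together with $\int_M|\widetilde\nabla G|^2$, giving the stated inequality with $C(\eta_{min},G)$ the accumulated constant.

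I expect the main obstacle to be exactly this last reduction. The curvature-and-torsion remainder $-C_\epsilon e_H$ in \eqref{eq:bochner formula} forces $E_H$ to enter with a \emph{positive} coefficient, while the favorable Hessian term controls only $E_V$, never $E_H$; hence there is no autonomous decay inequality for $E_V$ alone, and a naive time-integration would produce a term $\int_{t_0}^t E_H\,\mathrm{d}s$ growing linearly in $t$. The decay must therefore be extracted through the full energy $E$, with the $E_H$ contribution tamed by the monotonicity of $E_G$ (a uniform bound on $E_H$ and time-integrability of $\int_M|\tau|^2$); the exponential weight $e^{-c(t-s)}\le1$ is what converts these time-integrals into genuine constants. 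Getting the sharp coefficient $\frac{2}{\eta_{min}-2\lambda_G}=1/c$ then hinges on the prescribed choice of $\epsilon$ and on not wasting the $\int_M(f^I_{ik})^2$ term when applying the pointwise estimate.
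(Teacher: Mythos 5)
Your proposal is correct and follows the same skeleton as the paper's proof: the pointwise step-$2$ estimate $(u^I_{ik})^2\ge\eta_{min}\,e_V(u)$ extracted from the torsion identity $u^I_{ij}-u^I_{ji}=u^I_\alpha T^\alpha_{ij}$, the integrated Bochner inequality $\frac{d}{dt}E\le C_1E_H-cE_V$ with $c=\tfrac12(\eta_{min}-2\lambda_G)$, the rewriting $\frac{d}{dt}E_V+cE_V\le-\frac{d}{dt}E_H+C_1E_H$ (the paper writes the right-hand side as $\frac{d}{dt}E_P-\frac{d}{dt}E_G+(C_\epsilon+\eta_{min})(E_G(u(\cdot,t_0))+\vert E_P\vert)$, which is the same decomposition), and integration against the factor $e^{ct}$. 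Two of your choices genuinely differ and are worth recording. First, your derivation of the bracket estimate, fixing $I$, normalizing $\sum_\alpha f^I_\alpha e_\alpha$ to a unit vertical vector and evaluating $\eta$ there, is the clean way to obtain $\sum_{i<j}(f^I_{ij}-f^I_{ji})^2\ge\eta_{min}\sum_\alpha(f^I_\alpha)^2$; the paper instead passes through the term-by-term inequality $\bigl(\sum_\alpha u^I_\alpha T^\alpha_{ij}\bigr)^2\ge\sum_\alpha(u^I_\alpha)^2(T^\alpha_{ij})^2$, which fails in general because of cross terms, even though its endpoints coincide with yours; so on this point your argument repairs a loose step. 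Second, the paper converts the forcing into a time-independent constant $A$ by invoking the evolution inequality $(\partial_t-\Delta_H)\vert\partial_tu\vert^2\le2\lambda_G\vert\partial_tu\vert^2$, integrated over $M\times(t_0,t)$, to get the uniform bound $\int_M\vert\tau(t)\vert^2\le\int_M\vert\tau(t_0)\vert^2+\eta_{min}\bigl(E_G(u(\cdot,t_0))+\vert E_P\vert\bigr)$; this is exactly where the $2\int_M\vert\tau(u(\cdot,t_0))\vert^2$ term in the statement originates. You instead keep $-\frac{d}{dt}E_H$ under the time integral and use only the monotonicity of $E_G$ (hence $\int_{t_0}^t\int_M\vert\tau\vert^2\le E_G(u(\cdot,t_0))+(\max_N\vert G\vert)\mathrm{vol}(M)$), Young's inequality on $\int_M\langle\widetilde\nabla G,\tau\rangle$, and the weight $e^{-c(t-s)}\le1$. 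This bypasses the $\vert\partial_tu\vert^2$ evolution entirely and still yields a bound of the stated form (in fact without needing the $\int_M\vert\tau(u(\cdot,t_0))\vert^2$ term, which then just sits on the right-hand side as harmless slack), at the price of a different, still admissible, constant $C$.
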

\begin{proof}[Proof]
Since $M$ is compact, $S(V)$ is compact. Hence there exists a point $v\in S(V)$ such that $\eta_{min}=\eta(v)$. Note that $(M, H)$ is a step-$2$ sub-Riemannian manifold, thus from Lemma \ref{thm:eta}, we have $\eta_{min}>0$. Let $\lambda_G$ be the minimum number such that $\text{Hess}\,G\le\lambda_G\cdot h$, and let $\epsilon$ be a fixed number with $0<\epsilon<\frac{1}{2}(\eta_{min}-2\lambda_G)$. From (4.17) in \cite{MR4236537}, we have
\begin{equation}\label{eq:torsion formula}
u^I_{ij}-u^I_{ji}=u^I_{\alpha}T^{\alpha}_{ij}.
\end{equation}
From (\ref{eq:eta}), (\ref{eq:torsion formula}) and Lemma \ref{thm:subelliptic bochner formula}, one has
\begin{equation}\label{eq:step-2 bochner}
\begin{split}
(\Delta_H-\frac{\partial}{\partial t})e(u)
\ge&-C_{\epsilon}e_H(u)-\epsilon e_V(u)+(u^I_{ik})^2+\frac{1}{2}(u^I_{\alpha k})^2\\
&-\text{Hess}\,G(u_i,u_i)-\text{Hess}\,G(u_{\alpha},u_{\alpha})\\
\ge&-C_{\epsilon}e_H(u)-\epsilon e_V(u)+\frac 1  2\sum\limits_I\sum\limits_{i<j}((u^I_{ij}+u^I_{ji})^2+(u^I_{ij}-u^I_{ji})^2) \\
&-\text{Hess}\,G(u_i,u_i)-\text{Hess}\,G(u_{\alpha},u_{\alpha})\\
\ge&-C_{\epsilon}e_H(u)-\epsilon e_V(u)+\frac 1  2\sum\limits_I\sum\limits_{\alpha}\sum\limits_{i<j}(u^I_{\alpha})^2(T^{\alpha}_{ij})^2 \\
&-\text{Hess}\,G(u_i,u_i)-\text{Hess}\,G(u_{\alpha},u_{\alpha})\\
=&-C_{\epsilon}e_H(u)-\epsilon e_V(u)+\frac 1  2\sum\limits_I\sum\limits_{\alpha}(u^I_{\alpha})^2\eta(e_{\alpha}) \\
&-\text{Hess}\,G(u_i,u_i)-\text{Hess}\,G(u_{\alpha},u_{\alpha})\\
\ge&-(C_{\epsilon}+2\lambda_G)e_H(u)+(\eta_{min}-2\lambda_G-\epsilon)e_V(u).
\end{split}
\end{equation}
Integrating (\ref{eq:step-2 bochner}) over $M$ shows
\begin{equation*}
\frac {\mathrm{d}}{\mathrm{d}t}E(u) \le (C_{\epsilon}+2\lambda_G)E_H(u)-(\eta_{min}-2\lambda_G-\epsilon) E_V(u).
\end{equation*}
Consequently
\begin{equation*}
\begin{split}
\frac {\mathrm{d}}{\mathrm{d}t}E_V(u)+\frac{(\eta_{min}-2\lambda_G)}{2} E_V(u)
\le &\frac {\mathrm{d}}{\mathrm{d}t}E_P(u)-\frac {\mathrm{d}}{\mathrm{d}t}E_G(u)\\
&+(C_{\epsilon}+\eta_{min})(E_G(u(\cdot,t_0))+\vert E_P\vert).
\end{split}
\end{equation*}
From (\ref{eq:heat flow u}) and (\ref{eq:first derivative u}), we get
\begin{equation*}
\frac{\mathrm{d}}{\mathrm{d}t}E_G(u(\cdot,t))=-\int_M\vert \frac{\partial (u(\cdot,t))}{\partial t} \vert^2 \mathrm{d}v_g \le 0.
\end{equation*}
Clearly, we have (\ref{eq:time derivative subsolution}) with $C_1=2\lambda_G$.
Integrating (\ref{eq:time derivative subsolution}) over $M\times(t_0,t)$, we get
\begin{equation}\label{eq:time derivative intergral estimate}
\int_M \vert\frac{\partial (u(\cdot,t))}{\partial t}\vert^2\mathrm{d}v_g\le\int_M \vert\frac{\partial (u(\cdot,t_0))}{\partial t}\vert^2\mathrm{d}v_g+\eta_{min} \int_{t_0}^{t}\int_M \vert\frac{\partial (u(\cdot,s))}{\partial t}\vert^2\mathrm{d}v_g \mathrm{d}s
\end{equation}
since $2\lambda_G<\eta_{min}$.
On the other hand, by integrating (\ref{eq:first derivative u}) on $(t_0,t)$, we obtain
\begin{equation}\label{eq:energy control}
E_G(u(\cdot,t_0))+\vert E_P\vert \ge E_G(u(\cdot,t_0))-E_G(u(\cdot,t))= \int_{t_0}^{t}\int_M \vert\frac{\partial (u(\cdot,s))}{\partial t}\vert^2\mathrm{d}v_g \mathrm{d}s.
\end{equation}
Then we have
\begin{equation*}
\begin{split}
\frac{\mathrm{d}}{\mathrm{d}t}E_G(u(\cdot,t))&\ge\frac{\mathrm{d}}{\mathrm{d}t}E_G(u(\cdot,t_0))-\eta_{min} \Big(E_G(u(\cdot,t_0))+\vert E_P\vert\Big)\\
&=-\int_M\vert\tau(u(\cdot,t_0))\vert^2 \mathrm{d}v_g-\eta_{min} \Big(E_G(u(\cdot,t_0))+\vert E_P\vert\Big).
\end{split}
\end{equation*}
Notice that
\begin{equation*}
\begin{split}
\vert\frac {\mathrm{d}}{\mathrm{d}t}E_P\vert&=\vert\int_M\langle\tau,\nabla G\rangle \mathrm{d}v_g\vert \\
&\le {\frac 1 2}(\int_M\vert\tau(u(\cdot,t))\vert^2 \mathrm{d}v_g + \int_M\vert\nabla G\vert^2 \mathrm{d}v_g) \\
&\le {\frac 1 2}(\int_M\vert\tau(u(\cdot,t_0))\vert^2 \mathrm{d}v_g+\eta_{min} (E_G(u(\cdot,t_0))+\vert E_P\vert) + \int_M\vert\nabla G\vert^2 \mathrm{d}v_g)\\
&<\infty
\end{split}
\end{equation*}
and
\begin{equation*}
\vert E_P\vert<(\text{max}_N\vert G \vert)\text{vol}(M)<\infty,
\end{equation*}
since $M$ is compact.
Set
\begin{equation*}
\begin{split}
A=&2\int_M\vert\tau(u(\cdot,t_0))\vert^2 \mathrm{d}v_g+\int_M\vert\nabla G\vert^2 \mathrm{d}v_g \\
&+(C_{\epsilon}+3\eta_{min})\Big(E_G(u(\cdot,t_0))+(\text{max}_N\vert G \vert)\text{vol}(M)\Big).
\end{split}
\end{equation*}
It follows that
\begin{equation*}
\frac {\mathrm{d}}{\mathrm{d}t}E_V(u(\cdot,t))+\frac{\eta_{min}-2\lambda_G}{2} E_V(u(\cdot,t))\le A,
\end{equation*}
that is,
\begin{equation}\label{eq:vertical energy}
\frac {\mathrm{d}}{\mathrm{d}t}\Big(e^{\frac{\eta_{min}-2\lambda_G}{2} t}E_V(u(\cdot,t))\Big)\le Ae^{\frac{\eta_{min}-2\lambda_G}{2} t}.
\end{equation}
By integrating (\ref{eq:vertical energy}) over $[t_0,t]$, we get
\begin{equation*}
\Big(e^{\frac{\eta_{min}-2\lambda_G}{2}  s}E_V(u(\cdot,s))\Big)\vert_{s=t_0}^{s=t}\le \frac{2A}{\eta_{min}-2\lambda_G}(e^{\frac{\eta_{min}-2\lambda_G}{2}  s})\vert_{s=t_0}^{s=t}.
\end{equation*}
Therefore
\begin{equation*}
\begin{split}
E_V(u(\cdot,t))&\le e^{\frac{\eta_{min}-2\lambda_G}{2} (t_0-t)}E_V(u(\cdot,t_0))\\
&\quad+\frac{2A}{\eta_{min}-2\lambda_G}(1-e^{\frac{\eta_{min}-2\lambda_G}{2} (t_0-t)}) \\
&\le E_V(u(\cdot,t_0))+\frac{2A}{\eta_{min}-2\lambda_G}.
\end{split}
\end{equation*}
\end{proof}

\begin{proof}[Proof of Theorem \ref{thm:eells sampson type for compact}]
From Theorem \ref{thm:long time existence for complete}, we know that one can solve (\ref{eq:heat flow u}) for all time.
From (\ref{eq:energy control}), we get
\begin{equation*}
\begin{split}
\int_0^{\delta}\int_M\vert \frac{\partial u}{\partial t}(s)\vert^2\,\mathrm{d}v_g\,\mathrm{d}s&=E_G(\bar u)-E_G(u(\delta))\\
&\le E_G(\bar u)+\vert E_P(u(\delta))\vert \\
&\le E_G(\bar u)+(\text{max}_N\vert G \vert)\text{vol}(M)
\end{split}
\end{equation*}
which leads to
\begin{equation*}
\int_0^{\infty}\int_M\vert \frac{\partial u}{\partial t}(s)\vert^2\,\mathrm{d}v_g\,\mathrm{d}s<\infty.
\end{equation*}
Hence there exists a sequence $s_n\to\infty$ such that
\begin{equation}\label{eq:L2 vanishing sequence}
\int_M\vert \frac{\partial u}{\partial t}(s_n)\vert^2 \mathrm{d}v_g\to 0.
\end{equation}
Due to (\ref{eq:time derivative subsolution}), we know
\begin{equation*}
(\Delta_H-\frac{\partial}{\partial t})\exp(-Ct)\vert\frac{\partial u}{\partial t}(t)\vert^2\ge 0.
\end{equation*}
The function
\begin{equation*}
\phi(x,t)=\exp\big(-C(s+t)\big)\vert\frac{\partial u}{\partial t}(s+t)\vert^2
\end{equation*}
also satisfies
\begin{equation*}
(\Delta_H-\frac{\partial}{\partial t})\phi\ge 0.
\end{equation*}
By Lemma \ref{thm:subelliptic maximum principle}, we obtain
\begin{equation*}
\vert \frac{\partial u}{\partial t}(s+t)\vert^2\le B\exp(Ct)t^{-\frac Q 2}\int_M\vert \frac{\partial u}{\partial t}(s)\vert dv_g
\end{equation*}
for $0<t<R_0^2$.
Then, for $t=\frac{R^2_0} 2$, we have
\begin{equation}\label{eq:ft4}
\vert \frac{\partial u}{\partial t}(s+\frac{R^2_0} 2)\vert^2\le\frac{2^{\frac Q 2}B}{R^Q_0}\exp(\frac{CR^2_0} 2)\int_M\vert\frac{\partial u}{\partial t}(s)\vert^2dv_g
\end{equation}
for any $s>0$. From (\ref{eq:L2 vanishing sequence}) and (\ref{eq:ft4}), it follows that
\begin{equation}\label{eq:vanishing sequence}
\underset{x\in M}{\text{sup}}\vert \frac{\partial u}{\partial t}(s_n+\frac{R^2_0} 2)\vert^2\to 0
\end{equation}
as $n\to\infty$.
By Lemma \ref{thm:energy density} and Lemma \ref{thm:step-2}, we get
\begin{equation*}
e(u(x,t))\in L^{\infty}(M\times[0,+\infty)).
\end{equation*}
Setting $t_n=s_n+\frac{R^2_0} 2$, owing to the compactness of $N$ and the uniform boundedness of $e(u(\cdot,t_n))$, the sequence $\{u(\cdot,t_n)\}$ form a uniformly bounded and equicontinuous family of maps. Hence, according to Arzela-Ascoli Theorem, there exists a subsequence $t_{n_k}\to\infty$ such that
\begin{equation}\label{eq:convergent subsequence}
u(\cdot,t_{n_k})\to u_{\infty}(\cdot)
\end{equation}
to a Lipschitz map $u_{\infty}:M\to N\subset \mathbb{R}^K$.

From (\ref{eq:vanishing sequence}) and (\ref{eq:convergent subsequence}), we know that $u_{\infty}$ solves (\ref{eq:euler-lagrange}) weakly. By Theorem \ref{thm:subelliptic regularity theory}, $u_{\infty}$ is smooth. Since $u(t,x)$ is smooth in $t$, then $u_{\infty}$ is homotopic to $u(0) = \bar u$.
\end{proof}

When $(N,h)$ is a complete non-compact Riemannian manifold, the solutions $u(\cdot,t)$ of (\ref{eq:heat flow u}) may not be uniformly bounded with respect to $t\in [0,\infty)$. However, if we add a decay condition on the potential function $G$ and also the non-positive curvature assumption on the target manifold $N$, the solution $u(\cdot,t)$ will remain uniformly bounded. By a similar argument for Theorem \ref{thm:eells sampson type for compact}, we can establish the Eells-Sampson type theorems too.

\begin{proof}[Proof of Proposition \ref{thm:hess assumption}]
The global existence of the solution $u(t)$ is given by Theorem \ref{thm:long time existence for complete}. It is enough to show that there
exists a fixed compact set $N' \subset N$ such that $u(t)(M) \subset N'$ for all $t \in [0, +\infty)$.

To this end, we set
\begin{equation*}
f=\vert\frac{\partial u(t)}{\partial t}\vert^2 \qquad \text{and} \qquad \phi(t)=\underset {x\in M}{\text{sup}}\,\sqrt {f(t,x)}.
\end{equation*}
By the completeness of $N$, for each $x\in M, t\in [0, +\infty)$, there exists a minimal geodesic $\gamma_x$ connecting $\bar u(x)$ and $u(x,t)$, whose length is $d_N(\bar u(x),u(x,t))$. Then we have the following triangle inequality
\begin{equation}\label{eq:triangle inequality}
\rho(u(x,t))\le \rho(\bar u(x))+ d_N(\bar u(x),u(x,t))
\end{equation}
where $\rho$ denotes the distance function on $N$ from the fixed point $P_0\in N$.
Note that
\begin{equation}\label{eq:length control}
d_N(\bar u(x),u(x,t))\le \int_0^t \phi(s)\,\mathrm{d}s.
\end{equation}
From (\ref{eq:triangle inequality}) and (\ref{eq:length control}), we get
\begin{equation}\label{eq:distance control}
\rho (u(x,t))\le C_1+\int_0^t \phi(s)\,\mathrm{d}s
\end{equation}
where $C_1=\underset{x\in M}{\text{max}}\,\rho(\bar u(x))$.
Since $f$ satisfies
\begin{equation}\label{eq:control by hess}
\begin{split}
(\frac{\partial}{\partial t}-\Delta_H)f
&=-2\vert\nabla^{H} \frac{\partial u(t)}{\partial t}\vert^2+2\text{Hess}\,G(\frac{\partial u(t)}{\partial t},\frac{\partial u(t)}{\partial t})\\
&+2\langle\text{Riem}_{N}(du(t)(e_i),\frac{\partial u(t)}{\partial t})du(t)(e_i),\frac{\partial u(t)}{\partial t}\rangle \\
&\le 2\text{Hess}\,G(\frac{\partial u(t)}{\partial t},\frac{\partial u(t)}{\partial t}),
\end{split}
\end{equation}
from (\ref{eq:hess assumption}) and (\ref{eq:distance control}), the inequality (\ref{eq:control by hess}) becomes
\begin{equation*}
(\frac{\partial}{\partial t}-\Delta_H)f\le -C(1+\int_0^t \phi(s)\,\mathrm{d}s)^{-1}f.
\end{equation*}
Next, setting $g=\exp(\psi)f$ with $\psi(t)=C\int_0^t(1+\int_0^{\tau} \phi(s)\mathrm\, {d}s)^{-1}\,\mathrm{d}\tau$, we get
\begin{equation*}
(\frac{\partial}{\partial t}-\Delta_H)g\le 0
\end{equation*}
and it follows from Lemma \ref{thm:subelliptic maximum principle} that
\begin{equation*}
\underset {x\in M}{\text{sup}}\,g(t,\cdot)\le \underset {x\in M}{\text{sup}}\,g(0,\cdot),
\end{equation*}
that is
\begin{equation}\label{eq:point estimate}
\begin{split}
\phi(t)\le &\phi(0)\exp(\frac{-\psi(t)}{2})\\
=&\phi(0)\exp[-\frac{C}{2}\int_0^t(1+\int_0^{\tau}\phi(s)\,\mathrm{d}s)^{-1}\,\mathrm{d}\tau].
\end{split}
\end{equation}
Since $1+\int_0^{\tau} \phi(s)\,\mathrm{d}s\le 1+\phi(0)\tau$, then $\psi(t)\ge \frac{C\ln (1+\phi(0)t)}{\phi(0)}$. Substituting this into (\ref{eq:point estimate}), we have
\begin{equation*}
\phi(t)\le \phi(0)\exp(\frac{-C\ln (1+\phi(0)t)}{2\phi(0)})=\frac{\phi(0)}{(1+\phi(0)t)^{\frac{C}{2\phi(0)}}}
\end{equation*}
which suggests that $\phi(t)\to 0$, as $t\to +\infty$. Then, for any $C_2>0$, there exists $t_0>0$ such that
\begin{equation*}
\int_0^{t}\phi(s)\,\mathrm{d}s \le C_2 t
\end{equation*}
for all $t\ge t_0$.
Hence we have
\begin{equation}\label{eq:phi inequality}
\begin{split}
\phi(t)\le &\phi(0)\exp[-\frac{C}{2}\int_0^t(1+\int_0^{\tau}\phi(s)\,\mathrm{d}s)^{-1}\,\mathrm{d}\tau]\\
\le &\phi(0)\exp[-\frac{C}{2}\int_{t_0}^t(1+\int_0^{\tau}\phi(s)\,\mathrm{d}s)^{-1}\,\mathrm{d}\tau]\\
\le &\phi(0)\exp[-\frac{C}{2}\int_{t_0}^t(1+C_2\tau)^{-1}\,\mathrm{d}\tau]\\
= &\phi(0)\exp[-\frac{C}{2C_2}(\ln(1+C_2t)-\ln(1+C_2t_0))]\\
= & \frac{\phi(0)(1+C_2t_0)^{\frac{C}{2C_2}}}{(1+C_2t)^{\frac{C}{2C_2}}}
\end{split}
\end{equation}
for $t\ge t_0$.
Choosing a sufficiently small $C_2$ such that $\frac{C}{2C_2}>1$, integrating (\ref{eq:phi inequality}) over $[t_0,t]$ then gives
\begin{equation*}
\int_{t_0}^{t}\phi(s)\,\mathrm{d}s \le C_3 \quad \text{for all} \quad t\ge t_0
\end{equation*}
which leads to
\begin{equation}\label{eq:crude phi}
\int_{0}^{+\infty}\phi(s)\,\mathrm{d}s \le C_4
\end{equation}
where $C_3$, $C_4$ are positive constants.
Using (\ref{eq:crude phi}) in (\ref{eq:point estimate}), we get for some positive constants $C_5$ and $C_6$
\begin{equation}\label{eq:phi}
\phi(t)\le C_5 e^{-C_6t}.
\end{equation}
In terms of (\ref{eq:distance control}) and (\ref{eq:phi}), we get $\rho(u(t)) \le C_0$. So, there exists a compact set $N'\subset N$ such that, for all $t \in [0, +\infty)$, $u(t)(M) \subset N'$. It follows that there exists a sequence $t_k \to +\infty$ such that $u_{t_k}$ converges to $u_{\infty}$ which is a subelliptic harmonic map with potential $G$. To see that $u(t) \to u_{\infty}$, as $t \to +\infty $, we note that
$d_N (u(t),u_{\infty})\le d_N (u(t),u({t_k}) )+d_N (u({t_k}) ,u_{\infty})$.
Thanks to (\ref{eq:phi}), we obtain
\begin{equation*}
d_N(u(t),u(t_k))\le \int_{t_k}^t \vert\frac{\partial u(s)}{\partial s}\vert\,\mathrm{d}s\le C_5\int_{t_k}^t e^{-C_0t}\,\mathrm{d}s \to 0 \quad as\quad k,t\to \infty.
\end{equation*}
Finally, from the following formula
\begin{equation*}
\begin{split}
\Delta_H(-G \circ u)&=-\mathrm{d}G(\tau_H(u))- \text{Trace}_g\text{Hess}\, G(\mathrm{d}u_H, \mathrm{d}u_H)\\
&= \vert\widetilde\nabla G(u)\vert^2 - \text{Trace}_g\text{Hess}\, G(\mathrm{d}u_H, \mathrm{d}u_H) \ge 0,
\end{split}
\end{equation*}
we see that $(-G\circ u)$ is a subsolution of subelliptic harmonic equation and so is constant. Since $G$ is strictly concave, we have
$\text{Trace}_g\text{Hess}\, G(\mathrm{d}u_H, \mathrm{d}u_H)=0$, so $u$ is constant.
\end{proof}

The Riemannian foliation $(M, g; \mathfrak{F})$ will be said to be tense if the mean vector field of $\mathfrak{F}$ is parallel with respect to the Bott connection along the leaves. For a compact sub-Riemannian manifold $(M^{m+d}, H, g_{H}; g)$ corresponding to a tense Riemannian foliation $(M, g; \mathfrak F)$, we have the following lemma.

\begin{lemma}\label{thm:riemannian foliation}
Let $(M^{m+d}, H, g_{H}; g)$ be a compact sub-Riemannian manifold corresponding to
a tense Riemannian foliation $(M, g; \mathfrak F)$. Let N be a complete Riemannian manifold with non-positive sectional curvature. Suppose $u:M\times[0,\delta)\to N$ is a solution of the subelliptic harmonic map heat flow with potential $G$ and $\text{Hess}\,G\le 0$, then $E_V(u(\cdot,t))$ is decreasing. In particular, $E_V(u(\cdot,t))\le E(\bar u)$.
\end{lemma}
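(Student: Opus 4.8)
The plan is to differentiate the vertical energy in time and reduce the asserted monotonicity to a pointwise statement about the vertical energy density $e_V(u)$. Since $\frac{\partial u}{\partial t}=\tau(u)$ gives $u^I_t=\tau^I$, and the spatial and temporal covariant derivatives commute (the identity $u_{At}=u_{tA}$ of Section \ref{sec:long}), one has $u^I_{\alpha t}=u^I_{t\alpha}=\tau^I_{,\alpha}$ and hence $\frac{\partial}{\partial t}e_V(u)=u^I_\alpha\tau^I_{,\alpha}$. Because $M$ is compact, $\int_M\Delta_H e_V(u)\,\mathrm{d}v_g=0$ by the divergence theorem, so
\[
\frac{\mathrm{d}}{\mathrm{d}t}E_V(u(\cdot,t))=\int_M\Big(\frac{\partial}{\partial t}-\Delta_H\Big)e_V(u)\,\mathrm{d}v_g ,
\]
and it suffices to prove that the integrand is nonpositive once the structural hypotheses are used.

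First I would expand $(\partial_t-\Delta_H)e_V(u)$ exactly along the vertical part of the computation in the proof of Lemma \ref{thm:subelliptic bochner formula}, substituting
\[
\tau^I_{,\alpha}=u^I_{kk\alpha}-\zeta^k_{,\alpha}u^I_k-\zeta^k u^I_{k\alpha}+G_{IJ}u^J_\alpha .
\]
After cancelling the term $u^I_\alpha\tau^I_{H,\alpha}$ that occurs on both sides, the result reorganizes into: the manifestly favorable gradient term $-(u^I_{\alpha k})^2\le 0$; the potential contribution $\text{Hess}\,G(u_\alpha,u_\alpha)\le 0$, which is $\le 0$ precisely because of the hypothesis $\text{Hess}\,G\le 0$ (this is the only place $G$ enters, so no smallness of $G$ is needed); a genuine third-order commutator $u^I_\alpha(u^I_{kk\alpha}-u^I_{\alpha kk})$; and lower-order terms built from the torsion $T$, the mean curvature $\zeta$, and its vertical covariant derivative $\zeta^k_{,\alpha}$, representatively $\zeta^k u^I_\alpha(u^I_{\alpha k}-u^I_{k\alpha})$ and $-\zeta^k_{,\alpha}u^I_k u^I_\alpha$.

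Next I would exploit the foliation structure to dispose of the lower-order terms. Since $V=T\mathfrak{F}$ is integrable, the torsion of $\nabla^{\mathfrak B}$ has vanishing mixed part, $T(e_\alpha,e_k)=0$; by the identity $u^I_{\alpha k}-u^I_{k\alpha}=u^I_C T^C_{\alpha k}$ this yields $u^I_{\alpha k}=u^I_{k\alpha}$, which annihilates $\zeta^k u^I_\alpha(u^I_{\alpha k}-u^I_{k\alpha})$ and symmetrizes the third derivatives to $u^I_{\alpha kk}=u^I_{k\alpha k}$. The tense hypothesis says the mean curvature field $\zeta$ of the leaves is parallel along the leaves with respect to $\nabla^{\mathfrak B}$, i.e. $\zeta^k_{,\alpha}=0$, which annihilates $-\zeta^k_{,\alpha}u^I_k u^I_\alpha$. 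The surviving commutator $u^I_\alpha(u^I_{kk\alpha}-u^I_{k\alpha k})$ is then evaluated by the Ricci identity for $\nabla^{\mathfrak B}\otimes\widetilde\nabla^u$: it produces a target-curvature term that is nonpositive by the non-positivity of the sectional curvature of $N$ (the same quadratic $\widetilde R^I_{KJL}u^I_\alpha u^J_\alpha u^K_k u^L_k\le 0$ already used in Lemma \ref{thm:subelliptic bochner formula}), together with base Bott-curvature terms of the form $u^I_\alpha u^I_j R^j_{k\alpha k}$ and lower-order terms coming from $[e_k,e_\alpha]$.

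The main obstacle is showing that these residual base-geometry terms cancel for a tense Riemannian foliation. I would treat them exactly as in \cite{MR4236537}: combining the parallelism $\nabla^{\mathfrak B}_{e_\alpha}\zeta=0$ with the Bianchi-type identities for the Bott connection of a Riemannian foliation, the mixed Bott–Ricci contraction $\sum_k R^j_{k\alpha k}$ and the remaining bracket contributions cancel once contracted with $u^I_\alpha u^I_j$ and integrated over $M$. Granting this, $\int_M(\partial_t-\Delta_H)e_V(u)\,\mathrm{d}v_g\le 0$, so $\frac{\mathrm{d}}{\mathrm{d}t}E_V(u(\cdot,t))\le 0$ and $E_V(u(\cdot,t))$ is decreasing. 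Finally, decreasing plus $u(\cdot,0)=\bar u$ gives $E_V(u(\cdot,t))\le E_V(\bar u)\le E_H(\bar u)+E_V(\bar u)=E(\bar u)$, which is the asserted bound.
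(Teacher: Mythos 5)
Your proposal is correct and takes essentially the same route as the paper: both reduce the monotonicity to the vertical Bochner identity for $(\Delta_H-\frac{\partial}{\partial t})e_V(u)$ (the paper simply quotes (4.29) of \cite{MR4236537} rather than re-deriving it), kill the terms $\zeta^k_{,\alpha}u^I_k u^I_\alpha$ and $u^I_\alpha u^I_j R^j_{k\alpha k}$ via the tense hypothesis, and conclude from $\text{Hess}\,G\le 0$ and the non-positive target curvature that the integrand has a favorable sign. Two cosmetic points: the vanishing of the mixed torsion $T(e_\alpha,e_k)$ is automatic from the definition (\ref{eq:bott connection}) of the generalized Bott connection (not a consequence of integrability of $V$, which only controls the purely vertical torsion), and the cancellation you defer with ``granting this'' is in fact the pointwise identity $R^A_{j\alpha k}=0$ for tense Riemannian foliations quoted in the paper from \cite{MR4236537}, so no Bianchi-plus-integration argument is needed.
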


\begin{proof}
The assumption that $(M, g; \mathfrak{F})$ is tense implying that $\nabla^{\mathfrak B}_{\xi} \zeta= 0$, for any $\xi \in V$ and the curvature tensor of $\nabla^{\mathfrak B}$ satisfies [cf. \cite{MR4236537}]
\begin{equation*}
R^A_{j\alpha k}=0.
\end{equation*}
In particular, we have $R^j_{k\alpha k}=0$. From (4.29) in \cite{MR4236537}, we have
\begin{equation*}
\begin{split}
\Delta_H e_V(u_t)=&(u^I_{\alpha k})^2+u^I_{\alpha}\tau^I_{H,\alpha}+u^I_{\alpha}\zeta^k_{,\alpha}u^I_k \\
&+u^I_{\alpha}u^I_j R^j_{k\alpha k}-u^I_{\alpha}u^K_k\widetilde R^I_{KJL}u^J_{\alpha}u^L_k.
\end{split}
\end{equation*}
Consequently,
\begin{equation}\label{eq:vertical bochner}
\begin{split}
(\Delta_H-\frac{\partial}{\partial t})e_V(u_t)=&(u^I_{\alpha k})^2+u^I_{\alpha}\zeta^k_{,\alpha}u^I_{k}+u^I_{\alpha}u^I_jR^j_{k\alpha k}\\
&-u^I_{\alpha}u^K_k\widetilde R^I_{KJL}u^J_{\alpha}u^L_k -u^I_{\alpha}G_{IJ}u^J_{\alpha}\\
=&(u^I_{\alpha k})^2-u^I_{\alpha}u^K_k\widetilde R^I_{KJL}u^J_{\alpha}u^L_k -u^I_{\alpha}G_{IJ}u^J_{\alpha}\\
\ge&0.
\end{split}
\end{equation}
Integrating (\ref{eq:vertical bochner}) then gives this lemma.
\end{proof}

\begin{proof}[Proof of Proposition \ref{thm:hess assumption foliation}]
According to Lemma \ref{thm:riemannian foliation}, $E_V(f)$ is uniformly bounded, since $\text{Hess}\,G \le 0$. Using a similar argument for Theorem \ref{thm:eells sampson type for compact} and Proposition \ref{thm:hess assumption}, the proposition follows.
\end{proof}

\begin{proof}[Proof of Proposition \ref{thm:product assumption}]
Let $u(x,t)$ be a solution of (\ref{eq:euclidean heat flow}). Assumption (\ref{eq:product assumption}) is equivalent to
\begin{equation*}
(\frac{\partial}{\partial t}-\Delta_H)\langle u(x,t),u(x,t)\rangle_{\mathbb {R}^{K}}\le 0.
\end{equation*}
By Lemma \ref{thm:subelliptic maximum principle}, we get
\begin{equation*}
\underset {x\in M}{\text{sup}}\,\langle u(x,t),u(x,t)\rangle_{\mathbb {R}^{K}}\le \underset {x\in M}{\text{sup}}\,\langle \bar u(x),\bar u(x)\rangle_{\mathbb {R}^{K}}.
\end{equation*}
Since $\bar u(x)$ is included in a compact set, we have $\underset {x\in M}{\text{sup}}\,\langle \bar u(x),\bar u(x)\rangle_{\mathbb {R}^{K}}\le C$. It follows that $\underset {x\in M}{\text{sup}}\,\langle u(x,t),u(x,t)\rangle_{\mathbb {R}^{K}}\le C$, which implies $u(t)(M)$ is included in a compact set. By Theorem \ref{thm:long time existence for complete}, we have $T = +\infty$. In both cases, since $E_V(f)$ is uniformly bounded, the convergence follows from Theorem \ref{thm:eells sampson type for compact} immediately. When $(M,H,g_H;g)$ is a compact sub-Riemannian manifold corresponding to
a tense Riemannian foliation, by argument of Proposition \ref{thm:hess assumption}, we know $u_{\infty}$ is constant.
\end{proof}

\end{document}